\numberwithin{equation}{section}
\newtheorem{thm}{Theorem}[section]
\newtheorem{prop}[thm]{Proposition}
\newtheorem{lem}[thm]{Lemma}
\newtheorem{cor}[thm]{Corollary}
\theoremstyle{definition}
\newtheorem{defn}[thm]{Definition}
\theoremstyle{remark}
\newtheorem{rmk}[thm]{Remark}
\newtheorem{ex}[thm]{Example}
\newcommand{\CC}{\mathbb{C}}
\newcommand{\NN}{\mathbb{N}}
\newcommand{\RR}{\mathbb{R}}
\newcommand{\TT}{\mathbb{T}}
\newcommand{\ZZ}{\mathbb{Z}}
\def\Bb{\mathcal{B}}
\def\Tt{\mathcal{T}}
\newcommand{\Tr}{\operatorname{Tr}}
\newcommand{\Aut}{\operatorname{Aut}}
\newcommand{\lsp}{\operatorname{span}}
\newcommand{\clsp}{\overline{\lsp}}
\newcommand{\Per}{\operatorname{Per}}
\newcommand{\id}{\operatorname{id}}
\title[KMS states on the $C^*$-algebra of a higher-rank graph]
{KMS states on the $C^*$-algebra of a higher-rank graph and periodicity in the path space}
\author[an Huef]{Astrid an Huef}
\author[Laca]{Marcelo Laca}
\author[Raeburn]{Iain Raeburn}
\author[Sims]{Aidan Sims}
\address{Astrid an Huef and Iain Raeburn\\
Department of Mathematics and Statistics\\
University of Otago\\
PO Box 56\\
Dunedin 9054\\
New Zealand}
\email{astrid@maths.otago.ac.nz, iraeburn@maths.otago.ac.nz}
\address{Marcelo Laca\\
Department of Mathematics and Statistics\\
University of Victoria\\
Victoria, BC V8W 3P4\\
Canada}
\email{laca@math.uvic.ca}
\address{Aidan  Sims\\
School of Mathematics and Applied Statistics\\
University of Wollongong\\
NSW 2522\\
Australia}
\email{asims@uow.edu.au}
\date{27 April, 2014}
\subjclass[2010]{46L30, 46L55}
\thanks{This research has been supported by the University of Otago,
the Marsden Fund of the Royal Society of New Zealand,
the Natural Sciences and Engineering Research Council of Canada,
and the Australian Research Council. Part of this work was completed
at the workshops \emph{Graph algebras: Bridges between graph C*-algebras
and Leavitt path algebras} (13w5049) and
\emph{Operator algebras and dynamical systems from number theory} (13w5152)
at the Banff International Research Station.}
\begin{document}

\begin{abstract}
We study the KMS states of the $C^*$-algebra of a strongly connected finite $k$-graph. We
find that there is only one 1-parameter subgroup of the gauge action that can admit a KMS
state. The extreme KMS states for this preferred dynamics are parameterised by the
characters of an abelian group that captures the periodicity in the infinite-path space
of the graph. We deduce that there is a unique KMS state if and only if the $k$-graph
$C^*$-algebra is simple, giving a complete answer to a question of Yang. When the
$k$-graph $C^*$-algebra is not simple, our results reveal a phase change of an unexpected
nature in its Toeplitz extension.
\end{abstract}

\maketitle

\section{Introduction}

Higher-rank graphs ($k$-graphs) are higher-dimensional analogues of directed graphs (the
$1$-graphs). Each $k$-graph $\Lambda$ has a $C^*$-algebra $C^*(\Lambda)$ generated by a
family of partial isometries satisfying relations analogous to the Cuntz-Krieger
relations for a directed graph \cite{KP}. These graphs and their algebras have attracted
a great deal of attention, and the algebras provide illustrative examples for various
active areas of research \cite{PRRS, PRW, PRS, PZ, SZ}. Much of the structure theory of
graph algebras carries over to $k$-graphs, though often with significant changes and
considerable difficulty. It took quite a while, for example, to find a necessary and
sufficient condition for simplicity \cite{RobertsonSims:BLMS07}, and even for $2$-graphs
with a single vertex, this condition is hard to verify \cite{DY}.

Here we study the KMS states for a natural dynamics on the $C^*$-algebra of a $k$-graph.
When a $C^*$-algebra $A$ represents the observables in a physical model, time evolution
is modelled by a continuous action of $\RR$ (a dynamics) on $A$. The equilibrium states
are the states on $A$ that satisfy a commutation relation (the KMS condition) involving a
parameter called the inverse temperature. The KMS condition makes sense for any action of
$\RR$ on any $C^*$-algebra $A$, no matter where $A$ comes from, and the behaviour of the
KMS states always seems to reflect important structural properties of $A$. In recent
years there has been a flurry of activity in which various authors have studied the KMS
states on families of Toeplitz algebras arising in number theory \cite{BC, LR, CDL}, in
the representation theory of self-similar groups \cite{LRRW}, and in graph algebras
\cite{EL, KW, aHLRS1-graphs, aHLRSk-graphs, CL}. These dynamics all manifest a phase
transition in which a simplex of KMS states collapses to a simplex of lower dimension at
a critical inverse temperature.

The $C^*$-algebra of a finite $k$-graph admits a preferred dynamics $\alpha$, which we
discuss at the start of $\S\ref{sec:main}$. Yang has studied this dynamics for $k$-graphs
with a single vertex \cite{Yang:IJM10,Yang:xx11}. She has made a conjecture about the KMS
states and has verified this conjecture for $k=2$ \cite{Yang-preprint2013}. Here we
determine the full simplex of KMS states on $(C^*(\Lambda),\alpha)$ for a large class of
finite $k$-graphs, including all $k$-graphs with one vertex. This allows us to verify
Yang's conjecture for all $k$, and in far greater generality than it was posed. It also
completes the description of the KMS states for the preferred dynamics on the Toeplitz
algebra of $\Lambda$. Many examples exhibit an unexpected phase transition in which the
simplex expands dramatically at the critical inverse temperature instead of collapsing.

\medskip

Our results deal with finite $k$-graphs that are strongly connected in the sense that
there is a directed path from $v$ to $w$ for each pair $v,w$ of vertices. Each $r \in
[0,\infty)^k$ determines a homomorphism of $\RR$ into $\TT^k$, and composing this with
the gauge action on $C^*(\Lambda)$ yields a dynamics $\alpha$. We study KMS states of
$(C^*(\Lambda), \alpha)$. Previous analyses \cite{EFW, KW, aHLRS1-graphs2} for finite
1-graphs depend on Perron-Frobenius theory. So we start in Section~\ref{sec:new PF
matrices} by developing a Perron-Frobenius theory for families of pairwise commuting
non-negative matrices. In Section~\ref{sec:new PF k-graphs} we apply our Perron-Frobenius
theory to the coordinate matrices of $k$-graphs. We characterise the vectors $r$ for
which the associated dynamics admits KMS states on the Toeplitz algebra $\Tt
C^*(\Lambda)$. We show that only one dynamics admits KMS states on $C^*(\Lambda)$. We
call this the preferred dynamics.

Our main result describes the KMS states of $C^*(\Lambda)$ in terms of states on the
$C^*$-algebra of an abelian group $\Per\Lambda$ that captures periodicity in the
infinite-path space $\Lambda^\infty$. We describe $\Per\Lambda$ and its properties in
Section~\ref{sec:per}, and construct in Section~\ref{sec:central rep} an injection $\pi_U
: C^*(\Per\Lambda) \to C^*(\Lambda)$. Our main theorem, Theorem~\ref{thm:main}, says that
the map $\phi \mapsto \phi \circ \pi_U$ is an isomorphism from the KMS simplex of
$C^*(\Lambda)$ to the state space of $C^*(\Per\Lambda)$. (The inverse is described later
in Remark~\ref{rmk:inverse}.) For $k=1$, our theorem recovers the characterisations of
KMS states for Cuntz algebras \cite{OP} and Cuntz-Krieger algebras \cite{EFW}, and we
obtain a new description of the unique KMS state as an integral of vector states.

The proof of our main theorem occupies Sections
\ref{sec:measures}--\ref{sec:surjectivity}. In Section~\ref{sec:measures} we show that
the KMS states of $C^*(\Lambda)$ all induce the same probability measure $M$ on the
spectrum $\Lambda^\infty$ of the diagonal. We deduce in Theorem~\ref{thm:CK states} a
formula for a KMS state $\phi$ in terms of $\phi \circ \pi_U$. In
Section~\ref{sec:surjectivity} we construct a particular KMS state $\phi_1$ of
$C^*(\Lambda)$ as an integral against $M$ of vector states. Unlike for $k=1$ \cite{EFW,
CL}, this KMS state is not always supported on the fixed-point algebra for the gauge
action. Composing $\phi_1$ with gauge automorphisms then yields more KMS states $\phi_z$
(Corollary~\ref{cor:many KMS}). To prove our main theorem, we use Theorem~\ref{thm:CK
states} to see that $\phi \mapsto \phi \circ \pi_U$ is an affine injection, and then
establish surjectivity by showing that every pure state of $C^*(\Per\Lambda)$ has the
form $\phi_z \circ \pi_U$.

In Section~\ref{sec:consequences}, we discuss three applications of our main result.
First, we prove in Theorem~\ref{thm:YangConnection} that $C^*(\Lambda)$ has a unique
gauge-invariant KMS state, and that this KMS state is a factor state if and only if
$\Lambda$ is aperiodic. Restricting this result to $k$-graphs with one vertex confirms
Yang's conjecture in \cite{Yang-preprint2013}. Second, we describe the phase transition
in $\Tt C^*(\Lambda)$ at the critical inverse temperature 1. For many $k$-graphs, the KMS
simplex expands at the critical inverse temperature from a $(|\Lambda^0| -
1)$-dimensional simplex to an infinite-dimensional simplex. Third, we show that the KMS
simplex of $C^*(\Lambda)$ is highly symmetric: it carries a free and transitive action of
$(\Per\Lambda)\widehat{\;}$. We conclude in Section~\ref{sec:groupoids} by relating our
results to Neshveyev's analysis of KMS states on groupoid $C^*$-algebras~\cite{N}.

\section{Background}

\subsection{Higher-rank graphs}
A higher-rank graph of rank $k$, or $k$-graph, is a countable category $\Lambda$ equipped
with a functor $d : \Lambda \to \NN^k$ satisfying the factorisation property: whenever
$d(\lambda) = m + n$ there exist unique $\mu \in d^{-1}(m)$ and $\nu \in d^{-1}(n)$ such
that $\lambda = \mu\nu$.  We denote $d^{-1}(n)$ by $\Lambda^n$. The elements of
$\Lambda^0$ are precisely the identity morphisms, and we call them vertices.  We refer to
other elements of $\Lambda$ as paths. We write $r,s : \Lambda \to \Lambda^0$ for the maps
determined by the domain and codomain maps in $\Lambda$. We assume that $\Lambda^{e_i}
\not= \emptyset$ for each generator $e_i$ of $\NN^k$ (otherwise we can just regard
$\Lambda$ as a $(k-1)$-graph).

If $\lambda = \mu\nu\tau\in\Lambda$ where $d(\mu) = m$, $d(\nu) = n - m$ and $d(\tau) =
d(\lambda) - n$, then we denote $\nu = \lambda(m, m+n)$. We use the convention that for
$\lambda \in \Lambda$ and $X \subseteq \Lambda$,
\[
\lambda X = \{\lambda\mu : \mu \in X\text{ and } r(\mu) = s(\lambda)\},
\quad X\lambda = \{\mu\lambda : \mu \in X\text{ and }s(\mu) = r(\lambda)\},
\]
and so forth. We write $\Lambda^{\min}(\mu,\nu)$ for the set $\{(\alpha,\beta) :
\mu\alpha = \nu\beta \in \Lambda^{d(\mu) \vee d(\nu)}\}$.

We say that $\Lambda$ is \emph{finite} if $\Lambda^n$ is finite for all $n \in \NN^k$ and
say it has \emph{no sources} if $v \Lambda^{e_i} \not= \emptyset$ for all $v \in
\Lambda^0$ and all $e_i$; it follows that $v\Lambda^n \not= \emptyset$ for all $v \in
\Lambda^0$ and $n \in \NN^k$. We say that $\Lambda$ is \emph{strongly connected} if, for
all $v,w \in \Lambda^0$, the set $v\Lambda w$ is nonempty.

\begin{lem}\label{lem-nosources}
Let $\Lambda$ be a strongly connected  $k$-graph. Then
\begin{enumerate}
\item\label{lem-nosources-item1} $\Lambda$ has no sources and
\item\label{lem-nosources-item2}  for all $v\in\Lambda^0$ and $n\in\NN^k$, the set
    $\Lambda^nv\neq\emptyset$.
\end{enumerate}
\end{lem}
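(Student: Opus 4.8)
The plan is to read both assertions off the factorisation property, using strong connectivity only to supply paths with prescribed endpoints. The recurring idea is that to build a path of a prescribed degree at a given vertex, I first splice a known path together with one provided by strong connectivity to obtain a \emph{longer} path of the correct total degree reaching that vertex, and then apply the factorisation property to peel off a segment of exactly the wanted degree at the appropriate (range or source) end.

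For part~(\ref{lem-nosources-item1}), I fix $v\in\Lambda^0$ and a generator $e_i$. Since $\Lambda^{e_i}\neq\emptyset$ by standing assumption, I choose $\mu\in\Lambda^{e_i}$ and set $p=r(\mu)$. Strong connectivity yields a path $\lambda\in v\Lambda p$, so that $s(\lambda)=p=r(\mu)$ and the composite $\lambda\mu$ is defined, has range $v$, and has degree $d(\lambda)+e_i$. The factorisation property applied to $d(\lambda\mu)=e_i+d(\lambda)$ produces a factorisation $\lambda\mu=\alpha\beta$ with $d(\alpha)=e_i$; then $r(\alpha)=r(\lambda\mu)=v$, so $\alpha\in v\Lambda^{e_i}$ and this set is nonempty. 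As $v$ and $e_i$ were arbitrary, $\Lambda$ has no sources.

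Part~(\ref{lem-nosources-item2}) is dual. Granting part~(\ref{lem-nosources-item1}), the implication recorded after the definition of ``no sources'' gives $w\Lambda^n\neq\emptyset$ for all $w$ and $n$, and in particular $\Lambda^n\neq\emptyset$; so I may choose $\mu\in\Lambda^n$ and put $q=s(\mu)$. Strong connectivity supplies $\lambda\in q\Lambda v$, so $r(\lambda)=q=s(\mu)$ and the composite $\mu\lambda$ is defined, has source $v$, and has degree $n+d(\lambda)$. Factorising $\mu\lambda=\alpha\beta$ with $d(\beta)=n$ gives $s(\beta)=s(\mu\lambda)=v$, so $\beta\in\Lambda^n v$, which is therefore nonempty.

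I anticipate no substantive obstacle: the content of the lemma is just that strong connectivity feeds the factorisation property. The only place calling for care is the bookkeeping with $r$ and $s$ --- checking that each composite is genuinely defined (the source of the left factor equals the range of the right factor) and that the extracted segment is taken at the end that retains the prescribed vertex, namely the range end in part~(\ref{lem-nosources-item1}) and the source end in part~(\ref{lem-nosources-item2}).
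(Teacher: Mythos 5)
Your argument is correct and is essentially the paper's own proof: for (\ref{lem-nosources-item1}) you splice a path from $v$ onto an arbitrary edge of degree $e_i$ and peel off the degree-$e_i$ segment at the range end, exactly as in the paper, and your treatment of (\ref{lem-nosources-item2}) is the dual argument the paper summarises with ``the proof of \eqref{lem-nosources-item2} is similar.'' The bookkeeping with sources, ranges and the factorisation property is all in order.
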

\begin{proof}
For \eqref{lem-nosources-item1}, let $v\in\Lambda^0$ and $i\in\{1,\dots,k\}$.  By our
convention, $\Lambda^{e_i}\neq \emptyset$. Let $\lambda\in \Lambda^{e_i}$. Since
$\Lambda$ is strongly connected, there exists $\mu\in v\Lambda r(\lambda)$.  By the
factorisation property, $\mu\lambda=\lambda'\mu'$ for some $\lambda'\in v\Lambda^{e_i}$.
Thus $v\Lambda^{e_i}\neq\emptyset$. This gives \eqref{lem-nosources-item1}. The proof of
\eqref{lem-nosources-item2} is similar.
\end{proof}

\subsection{Higher-rank graph $C^*$-algebras}

Let $\Lambda$ be a finite $k$-graph with no sources. A Cuntz-Krieger $\Lambda$-family is
a collection $\{t_\lambda : \lambda \in \Lambda\}$ of partial isometries  in a
$C^*$-algebra $A$ such that
\begin{itemize}
\item[(CK1)] the elements $\{t_v : v \in \Lambda^0\}$ are mutually orthogonal
    projections,
\item[(CK2)] $t_\mu t_\nu = t_{\mu\nu}$ when $s(\mu) = r(\nu)$,
\item[(CK3)] $t^*_\mu t_\mu = t_{s(\mu)}$ for all $\mu$, and
\item[(CK4)] for all $v \in \Lambda^0$ and $n \in \NN^k$, we have $t_v =
    \sum_{\lambda \in v\Lambda^n} t_\lambda t^*_\lambda$.
\end{itemize}
The $C^*$-algebra $C^*(\Lambda)$ of $\Lambda$ is generated by a universal Cuntz-Krieger
$\Lambda$-family $\{s_\lambda\}$. We write $p_v := s_v$ for $v \in \Lambda^0$. The
Cuntz-Krieger relations imply that for all $\mu,\nu\in\Lambda$
\[
s^*_\mu s_\nu = \sum_{(\alpha,\beta) \in \Lambda^{\min}(\mu,\nu)} s_\alpha s^*_\beta
\]
(we interpret empty sums as zero). In particular, if $d(\mu)=d(\nu)$, then
\[
s^*_\mu s_\nu = \delta_{\mu, \nu}p_{s(\mu)}.
\]
Relations (CK1)~and~(CK2) then imply that $C^*(\Lambda) = \clsp\{s_\mu s^*_\nu : \mu,\nu
\in \Lambda, s(\mu) = s(\nu)\}$. There is a strongly continuous action
$\gamma:\TT^k\to\Aut C^*(\Lambda)$ such that $\gamma_z(p_v)=p_v$ and
$\gamma_z(s_\lambda)=z^{d(\lambda)}s_\lambda$ for $z\in\TT^k$.  This action is called the
\emph{gauge action}.

\subsection{The Perron-Frobenius theorem}

There are several Perron-Frobenius theorems; the one we use here applies to irreducible
matrices. Let $S$ be a finite set. We say a matrix  $A\in M_S(\CC)$  is
\emph{non-negative} if $A(s,t) \geq 0$ for all $s,t\in S$ and is \emph{positive} if
$A(s,t)> 0$ for all $s,t\in S$. A non-negative matrix $A\in M_S$ is \emph{irreducible} if
for each $s,t\in S$ there exists $N\in\NN$ such that $A^N(s,t)>0$. Equivalently, $A$ is
irreducible if there is a finite subset $F \subseteq \NN$ such that $\sum_{n \in F} A^n$
is positive.

Let $A$ be an irreducible matrix. The Perron-Frobenius theorem (see, for example,
\cite[Theoren~1.5]{Seneta}) says that the spectral radius $\rho(A)$ is an eigenvalue of
$A$ with a $1$-dimensional eigenspace and a  positive eigenvector; we call the unique
positive eigenvector with eigenvalue $\rho(A)$ and unit $1$-norm  the \emph{unimodular
Perron-Frobenius eigenvector of $A$}.

\section{Perron-Frobenius theory for commuting matrices}\label{sec:new PF matrices}

In \cite{aHLRSk-graphs}, we employed a version of the Perron-Frobenius theorem for
pairwise commuting irreducible matrices \cite[Lemma~2.1]{aHLRSk-graphs} to describe KMS
states on the $C^*$-algebras of coordinatewise-irreducible $k$-graphs. David Pask
subsequently pointed out to us that he and Kumjian had adapted a technique from Putnam
\cite{Putnam:notes} to prove a Perron-Frobenius theorem for strongly connected finite
$k$-graphs in \cite[Lemma~4.1]{KumjianPask:ETDS03}. In this section, we adapt Kumjian and
Pask's ideas to formulate a Perron-Frobenius theorem for families of commuting
non-negative matrices that are jointly irreducible in an appropriate sense. Our primary
use for this theorem is in the context of finite $k$-graphs, and we deduce what we need
to know about these in the next section. But our results are applicable to more general
classes of matrices than those arising from $k$-graphs and may be of independent
interest.

Let $S$ be a finite set, $\{A_1, \dots, A_k\} \subseteq M_S\big([0, \infty)\big)$ a
family of commuting matrices, $n=(n_1,\dots, n_k) \in \NN^k$ and $F$ a finite subset of
$\NN^k$.  We use the multi-index notation
\[
	A^n := \prod_{i=1}^k A_i^{n_i}\quad\text{and}\quad A_F := \sum_{n \in F} A^n.
\]
We say that the family $\{A_1, \dots, A_k\}$ is \emph{irreducible} if each $A_i \not= 0$
and there exists a finite subset $F \subseteq \NN^k$ such that $A_F(s,t) > 0$ for all
$s,t \in S$; that is, $A_F$ is positive. Observe that in an irreducible family of
matrices, the individual $A_i$ may not be irreducible. So an irreducible family of
matrices is not the same thing as a family of irreducible matrices. For examples of this
distinction arising from $k$-graphs, see Example~\ref{ex:irreducible family}.

\begin{prop}\label{prp:newer PF}
Suppose that $\{A_1, \dots, A_k\}$ is an irreducible family of nonzero commuting matrices
in $M_S\big([0, \infty)\big)$.  Let $F$ be a finite subset of $\NN^k$ such that
$A_F(s,t)>0$ for all $s,t\in S$ and let $x$ be the unimodular Perron-Frobenius
eigenvector of $A_F$.
\begin{enumerate}
\item\label{it:PFev-newer}
\begin{enumerate}
\item \label{item-i-newer} The vector $x$ is the unique non-negative vector of
    unit $1$-norm that is a common eigenvector of all the $A_i$.
\item\label{item-0-newer} We have $A_ix=\rho(A_i)x$ for each $i$, and each
    $\rho(A_i)>0$.
\item\label{item-ii-newer} If $z \in \CC^S$ and $A_i z = \rho(A_i) z$ for all
    $1\leq i\leq k$, then $z \in \CC x$.
\end{enumerate}
\item\label{it:subinv-newer} Suppose that $y \in [0,\infty)^S$ is non-zero and
    $\lambda \in [0,\infty)^k$ satisfies $A_i y \le \lambda_i y$ for all $1\leq i\leq
    k$.
    \begin{enumerate}
    \item\label{item:subinv-newer1} Then $y > 0$ in the sense that each $y_s >
        0$, and $\lambda_i \ge \rho(A_i)$ for all $1\leq i \leq k$.
    \item\label{item:subinv-newer2} If $\lambda_i=\rho(A_i)$ for all $1\leq i\leq
        k$ and $y$ has unit $1$-norm, then $y=x$.
  \end{enumerate}
\item\label{prp:newer PF-item3} Let $n \in \NN^k$. Then $\rho(A^n)=\prod_{i=1}^k
    \rho(A_i)^{n_i}  > 0$.
\end{enumerate}
\end{prop}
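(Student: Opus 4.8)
The plan is to deduce everything from the classical Perron--Frobenius theorem applied to the single positive matrix $A_F$, exploiting the fact that each $A_i$ commutes with $A_F$ (as $A_F$ is a sum of products of the $A_j$). The one recurring tool will be the elementary observation that if $B$ is a non-negative matrix and $Bv = \mu v$ for some $v > 0$, then $\mu = \rho(B)$: indeed $\mu \le \rho(B)$ because $\mu$ is an eigenvalue, while pairing with a non-negative left eigenvector $w$ of $B$ for $\rho(B)$ (which exists by the general Perron--Frobenius theorem for non-negative matrices) gives $\mu\, w^\top v = w^\top B v = \rho(B)\, w^\top v$ with $w^\top v > 0$, forcing $\mu = \rho(B)$. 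I will use this, and its subinvariance analogue, repeatedly.

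For part \eqref{it:PFev-newer}: Perron--Frobenius applied to the positive matrix $A_F$ gives that the $\rho(A_F)$-eigenspace is exactly $\CC x$ with $x > 0$. Since each $A_i$ commutes with $A_F$, it preserves this one-dimensional eigenspace, so $A_i x = \mu_i x$ for some scalar $\mu_i$; non-negativity of $A_i$ and positivity of $x$ force $\mu_i \ge 0$, and the recurring tool identifies $\mu_i = \rho(A_i)$. If $\rho(A_i) = 0$ then $A_i x = 0$, which (as $x > 0$) forces $A_i = 0$, contradicting irreducibility; hence $\rho(A_i) > 0$. This proves the eigenvalue statement and exhibits $x$ as a non-negative unit-$1$-norm common eigenvector. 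For the characterisation of common eigenvectors, if $A_i z = \rho(A_i) z$ for all $i$ then $A_F z = \big(\sum_{n \in F}\prod_i \rho(A_i)^{n_i}\big) z$; running the same computation on $x$ and comparing with $A_F x = \rho(A_F) x$ shows the bracketed scalar equals $\rho(A_F)$, so $z$ lies in the $\rho(A_F)$-eigenspace $\CC x$. Uniqueness of the non-negative common eigenvector then follows: any such $y$ is a non-negative eigenvector of the irreducible matrix $A_F$, hence a positive multiple of $x$, and the unit-$1$-norm normalisation gives $y = x$.

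For the subinvariance part \eqref{it:subinv-newer}: from $A_i y \le \lambda_i y$ and the commuting, monotone action of non-negative matrices, an induction on $|n|$ gives $A^n y \le \lambda^n y$ and hence $A_F y \le \big(\sum_{n \in F}\lambda^n\big) y$. Since $A_F$ is positive and $y \ge 0$ is non-zero, $A_F y > 0$, whence $y > 0$. Pairing $A_i y \le \lambda_i y$ with a non-negative left eigenvector of $A_i$ for $\rho(A_i)$ (and using $y > 0$) yields $\rho(A_i) \le \lambda_i$. When $\lambda_i = \rho(A_i)$ for all $i$, the displayed inequality becomes $A_F y \le \rho(A_F) y$; pairing the non-negative vector $\rho(A_F) y - A_F y$ with the strictly positive left Perron eigenvector of $A_F$ shows this difference is $0$, so $A_F y = \rho(A_F) y$, and then $y = x$ exactly as in the uniqueness argument above.

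Part \eqref{prp:newer PF-item3} is immediate from the eigenvalue computation: $A^n x = \prod_i \rho(A_i)^{n_i} x$ exhibits $x > 0$ as a positive eigenvector of the non-negative matrix $A^n$, so the recurring tool gives $\rho(A^n) = \prod_i \rho(A_i)^{n_i}$, which is positive because each $\rho(A_i) > 0$. The one point demanding care throughout — and the main obstacle to a naive argument — is that the individual $A_i$ need not be irreducible, so I cannot apply the strong (positive-eigenvector) form of Perron--Frobenius to them directly; every statement about a single $A_i$ must instead be routed either through the commuting action on the top eigenspace of $A_F$ or through the general non-negative Perron--Frobenius theorem, which supplies only a non-negative (possibly non-strictly-positive) left eigenvector for $\rho(A_i)$.
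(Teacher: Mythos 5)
Your proof is correct, and its overall architecture coincides with the paper's: everything is routed through the single positive matrix $A_F$, using commutativity to see that each $A_i$ preserves the one-dimensional top eigenspace of $A_F$, and then identifying the resulting eigenvalues with the $\rho(A_i)$. Where you genuinely diverge is in the technical underpinning. The paper proves the key estimate ``$Bv\le\lambda v$ with $v>0$ implies $\lambda\ge\rho(B)$'' for a possibly reducible non-negative $B$ (its Lemma~\ref{lem:nonirred subinvariance}) by approximating $B$ with strictly positive matrices and invoking continuity of the spectrum, and it handles the equality cases (that a non-negative eigenvector of $A_F$ has eigenvalue $\rho(A_F)$, and that subinvariance at $\rho(A_F)$ forces invariance) by citing the two directions of Seneta's Subinvariance Theorem. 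You replace all of these with a single duality device: pair the (sub)eigenvector relation with a non-negative left eigenvector for $\rho(B)$, whose existence you take from the weak Perron--Frobenius theorem for general non-negative matrices, and for the equality case with the strictly positive left Perron eigenvector of $A_F$. This is cleaner and self-contained at the level of the proposition, at the cost of outsourcing the existence of a non-negative left $\rho(B)$-eigenvector for arbitrary non-negative $B$ --- a standard fact, but one whose usual proof is precisely the perturbation argument the paper makes explicit. Your positivity argument in \eqref{item:subinv-newer1} (deduce $A_Fy>0$ from positivity of $A_F$, then read off $y>0$ from $A_Fy\le\bigl(\sum_{n\in F}\lambda^n\bigr)y$) is also a mild streamlining of the paper's entrywise argument via $A^n(s,t)>0$. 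All steps check out.
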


The following lemma helps in the proof of Proposition~\ref{prp:newer PF}.

\begin{lem}\label{lem:nonirred subinvariance}
Let $B \in M_S\big([0, \infty)\big)$, and suppose that $x \in (0,\infty)^S$ and  $\lambda
\ge 0$ satisfy $Bx \le \lambda x$. Then $\lambda \ge \rho(B)$.
\end{lem}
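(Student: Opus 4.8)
The plan is to exploit the strict positivity of $x$ to build a norm on $\CC^S$ that is adapted to $x$, and in which $B$ is automatically bounded by $\lambda$. Once we have an operator norm $\|\cdot\|$ with $\|B\| \le \lambda$, the conclusion is immediate from the elementary fact that the spectral radius is dominated by any operator norm, so $\rho(B) \le \|B\| \le \lambda$. The point is that this route never requires $\rho(B)$ to be an eigenvalue, nor that it have an associated positive eigenvector; this matters because $B$ is \emph{not} assumed irreducible here, so the Perron--Frobenius theorem recalled above does not apply to $B$ itself.

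Concretely, I would define the weighted supremum norm $\|v\|_x := \max_{s \in S} |v_s|/x_s$ for $v \in \CC^S$; this is a genuine norm precisely because each $x_s > 0$. Writing $\|B\|_x$ for the induced operator norm, I would estimate, for any $v \in \CC^S$ and any $s \in S$, using the triangle inequality together with $B(s,t) \ge 0$ and the defining bound $|v_t| \le \|v\|_x\, x_t$,
\[
|(Bv)_s| \le \sum_{t\in S} B(s,t)\,|v_t| \le \|v\|_x \sum_{t\in S} B(s,t)\, x_t = \|v\|_x\,(Bx)_s \le \lambda\,\|v\|_x\, x_s,
\]
where the last inequality is exactly the hypothesis $Bx \le \lambda x$ applied coordinatewise. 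Dividing by $x_s$ and taking the maximum over $s$ gives $\|Bv\|_x \le \lambda \|v\|_x$, and hence $\|B\|_x \le \lambda$.

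Combining this with $\rho(B) \le \|B\|_x$ yields $\rho(B) \le \lambda$, as desired. There is no serious obstacle in this argument; the only thing to be careful about is the conceptual one already noted, namely resisting the temptation to invoke the irreducible Perron--Frobenius theorem on $B$ (which fails without irreducibility) and instead letting the given positive vector $x$ supply the weight. An essentially equivalent alternative would be to iterate $Bx \le \lambda x$ to obtain $B^n x \le \lambda^n x$ for all $n$, sandwich $x$ between scalar multiples of the all-ones vector to deduce that every row sum of $B^n$ is $O(\lambda^n)$, and then pass to the limit in Gelfand's formula $\rho(B) = \lim_n \|B^n\|^{1/n}$; but the weighted-norm formulation avoids both the induction and the limit.
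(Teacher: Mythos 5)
Your argument is correct, and it proves the lemma by a genuinely different and more elementary route than the paper. The paper approximates $B$ by a sequence of strictly positive matrices $B_j \to B$, notes that the strict positivity of $x$ gives $B_j x < (\lambda+\epsilon)x$ for large $j$, invokes part (b) of Seneta's Subinvariance Theorem for each positive $B_j$ to get $\rho(B_j) \le \lambda + \epsilon$, and then uses continuity of the eigenvalues of a matrix in its entries to pass to the limit and conclude $\rho(B) \le \lambda$. Your weighted-norm argument replaces all of this external machinery with the single elementary inequality $\rho(B) \le \|B\|_x$, valid for any induced operator norm, together with the coordinatewise estimate showing $\|B\|_x \le \lambda$; each step you write checks out (the positivity of the entries of $x$ is exactly what makes $\|\cdot\|_x$ a norm, and the non-negativity of $B$ is what lets the triangle inequality collapse onto $(Bx)_s$). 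What the paper's approach buys is consistency with the Perron--Frobenius toolkit it is already citing throughout Section 3; what yours buys is self-containedness and the avoidance of both the Subinvariance Theorem and the continuity-of-roots citation. Either proof serves the role the lemma plays in Proposition \ref{prp:newer PF}.
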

\begin{proof}
Choose a sequence $\{B_j\}$ in $M_S\big((0,\infty)\big)$ converging to $B$. Then $B_j x
\to B x \le \lambda x$. Fix $\epsilon>0$. The entries of $x$ are strictly positive, and
so $B_j x < (\lambda + \epsilon) x$ for large $j$. Part~(b) of the Subinvariance Theorem
\cite[Theorem~1.6]{Seneta} for the positive matrix $B_j$ gives $\lambda + \epsilon \ge
\rho(B_j)$ for large $j$. Since the eigenvalues of a complex matrix vary continuously
with its entries (see, for example, \cite[Theorem~B]{HarrisMartin:PAMS87}), we have
$\rho(B_j) \to \rho(B)$ as $j \to \infty$. Hence $\lambda + \epsilon \ge \rho(B)$. Thus
$\lambda \ge \rho(B)$.
\end{proof}

\begin{proof}[Proof of Proposition~\ref{prp:newer PF}]
Such a finite set $F$ exists because $\{A_1, \dots, A_k\}$ is irreducible.

(\ref{it:PFev-newer}) Since $x$ is a Perron-Frobenius eigenvector, $x > 0$ by
\cite[Theorem~1.5 (b) and (f)]{Seneta}. Let $i\in\{1,\dots,k\}$.

We have
\[
A_F (A_i x) =  A_i (A_F x) = \rho(A_F) A_i x.
\]
So $A_i x$ is a non-negative eigenvector for $A_F$ with eigenvalue $\rho(A_F)$. Since the
eigenspace corresponding to $\rho(A_F)$ is one-dimensional (\cite[Theorem~1.5
(f)]{Seneta}) we have $A_ix = \lambda_i x$ for some $\lambda_i \in [0,\infty)$. To prove
uniqueness, we first claim that
\begin{equation}\label{eq:prodsumeval}
y > 0\text{ and }A_i y = \eta_i y\text{ for all $i$}
    \qquad\Longrightarrow\qquad
\sum_{n \in F} \prod_i \eta_i^{n_i} = \rho(A_F).
\end{equation}
To see this, suppose that $y > 0$ and $A_i y = \eta_i y$ for all $i$. We have
\begin{equation}\label{eq:AFy}
    A_F y = \Big(\sum_{n \in F} \prod_i A_i^{n_i}\Big) y = \Big(\sum_{n \in F} \prod_i \eta_i^{n_i}\Big) y.
\end{equation}
Thus $y$ is an eigenvector of $A_F$ with eigenvalue $\eta := \sum_{n\in
F}\prod_{i=1}^k\eta_i^{n_i}$. Since $\|y\|_1 = 1$, some $y_s > 0$. Since $A_F$ is
positive, we have $A_F(s,s) > 0$ and so $\eta y_s = (A_F y)_s \ge A_F(s,s)y_s > 0$. So
$\eta > 0$. Since $y\geq 0$ and $y\neq 0$, the ``if'' direction of the last sentence of
the Subinvariance Theorem \cite[Theorem~1.6]{Seneta} gives $\eta = \rho(A_F)$.

Now suppose that $y$ is a nonnegative unimodular common eigenvector of the $A_i$. Then
\eqref{eq:prodsumeval}~and~\eqref{eq:AFy} show that $x$ and $y$ are non-negative and of
the same norm in the same one-dimensional eigenspace of $A_F$, hence are equal. This
completes the proof of~(\ref{item-i-newer}).

Since the $A_i$ and $x$ are real and non-negative, and by definition of the spectral
radius, each $0\leq \lambda_i\leq \rho(A_i)$. Lemma~\ref{lem:nonirred subinvariance}
(applied to $A_i$, $x$ and $\lambda_i$) implies that $\lambda_i\geq \rho(A_i)$ as well,
giving $\lambda_i=\rho(A_i)$.  Thus $A_ix = \rho(A_i)x$ for each $i$. Since $x$ is
positive and $A_i \not= 0$ this forces each $\rho(A_i) > 0$. This
proves~(\ref{item-0-newer}).

The claim~\eqref{eq:prodsumeval} applied with $y = x$ and $\eta_i = \rho(A_i)$ now gives
\begin{equation}\label{eq:e-value}
\rho(A_F) = \sum_{n \in F} \prod_i\rho(A_i)^{n_i}.
\end{equation}

For (\ref{item-ii-newer}) suppose that $A_i z = \rho(A_i) z$ for $1\le i \le k$. Then
\[
    A_F z = \Big(\sum_{n \in F} \prod_i \rho(A_i)^{n_i}\Big) z=\rho(A_F)z
\]
using \eqref{eq:e-value}. Thus $z$ is an eigenvector of $A_F$ with eigenvalue
$\rho(A_F)$. The eigenspace of the Perron-Frobenius eigenvalue $\rho(A_F)$ is
one-dimensional,  and hence $z \in \CC x$.

(\ref{it:subinv-newer}) Fix $s \in S$. Since $y \not= 0$, there exists $t \in S$ such
that $y_t > 0$. Since $\{A_1, \dots, A_k\}$ is an irreducible family, there exists $n \in
\NN^k$ such that $A^n(s,t) > 0$. Then
\[
    \lambda^n y_s = \Big(\prod_i\lambda_i^{n_i}\Big) y_s \ge (A^n y)_s \ge A^n(s,t) y_t > 0.
\]
Thus $y_s>0$ for all $s\in S$, so $y>0$. Next, fix $i$. By assumption, $\lambda_i\geq 0$
and $A_i y \le \lambda_i y$. Thus Lemma~\ref{lem:nonirred subinvariance} applied to
$A_i$, $\lambda_i$ and $y$ gives $\lambda_i \ge \rho(A_i)$. This establishes
\eqref{item:subinv-newer1}.

Next, suppose that $A_iy\leq \rho(A_i)y$ for $1\leq i\leq k$.  Then
\[
    A_F y \leq \Big(\sum_{n \in F} \prod_i \rho(A_i)^{n_i}\Big) y=\rho(A_F)y
\]
using \eqref{eq:e-value}. So the ``only-if'' direction of the last sentence of the
Subinvariance Theorem \cite[Theorem~1.6]{Seneta} says that $A_F y=\rho(A_F)y$. Now $x$
and $y$ are non-negative of the same norm in the same one-dimensional eigenspace, hence
are equal. This gives \eqref{item:subinv-newer2}.

\eqref{prp:newer PF-item3} By \eqref{item-0-newer}, $x$ is a common eigenvector of the
$A_i$ with eigenvalue $\rho(A_i)$. Thus
\[
     A^n x = \Big(\prod_i A_i^{n_i}\Big) x = \Big(\prod_i \rho(A_i)^{n_i}\Big) x.
\]
and hence $\prod_i \rho(A_i)^{n_i}\leq \rho(A^n)$. Since $x>0$,  Lemma~\ref{lem:nonirred
subinvariance} implies that  $\prod_i \rho(A_i)^{n_i}\geq \rho(A^n)$. Now
$\rho(A^n)=\prod_i \rho(A_i)^{n_i}>0$ because each $\rho(A_i) > 0$ by
(\ref{item-0-newer}).
\end{proof}

\section{KMS states on Toeplitz algebras of strongly connected $k$-graphs}\label{sec:new PF k-graphs}
In this section, we apply the results of Section~\ref{sec:new PF matrices} to the
coordinate matrices of finite $k$-graphs. We use them to improve the results of
\cite{aHLRSk-graphs} about which dynamics on the Toeplitz algebra of a
coordinatewise-irreducible $k$-graph admit KMS states. We will also use the results of
this section extensively later to characterise the KMS states on the Cuntz-Krieger
algebras of finite $k$-graphs.

Let $\Lambda$ be a finite $k$-graph.  For $1\leq i\leq k$, let $A_i$ be the matrix in
$M_{\Lambda^0}$ with entries $A_i(v,w) = |v\Lambda^{e_i}w|$ for $v, w\in\Lambda^0$. We
call the $A_i$ the \emph{coordinate matrices} of $\Lambda$.

\begin{lem}\label{lem:s.c<->irred}
Let $\Lambda$ be a finite $k$-graph with coordinate matrices $A_1, \dots, A_k$. Then the
$A_i$ are nonzero pairwise-commuting matrices, and $\Lambda$ is strongly connected if and
only if $\{A_1, \dots, A_k\}$ is an irreducible family of matrices.
\end{lem}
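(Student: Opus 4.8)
The plan is to handle the three assertions in turn, with everything resting on the observation that powers of the coordinate matrices count paths. First, nonzeroness is immediate: the standing convention gives $\Lambda^{e_i} \neq \emptyset$, and any $\lambda \in \Lambda^{e_i}$ satisfies $\lambda \in r(\lambda)\Lambda^{e_i}s(\lambda)$, so $A_i(r(\lambda), s(\lambda)) \geq 1$ and $A_i \neq 0$. Finiteness of $\Lambda$ guarantees that every entry $|v\Lambda^{e_i}w|$ is finite and that $\Lambda^0$ is a finite index set, so the $A_i$ genuinely lie in $M_{\Lambda^0}$.

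For commutativity I would compute matrix entries directly rather than manipulating the multi-index notation, which presupposes a fixed ordering. Fixing $i \neq j$ and $v,w \in \Lambda^0$, I have
\[
(A_iA_j)(v,w) = \sum_{u \in \Lambda^0} |v\Lambda^{e_i}u|\,|u\Lambda^{e_j}w|.
\]
The factorisation property sets up a bijection between $v\Lambda^{e_i+e_j}w$ and the disjoint union over $u$ of $(v\Lambda^{e_i}u) \times (u\Lambda^{e_j}w)$, since each $\lambda \in v\Lambda^{e_i+e_j}w$ factors \emph{uniquely} as $\mu\nu$ with $d(\mu) = e_i$, $d(\nu) = e_j$, and intermediate vertex $u = s(\mu) = r(\nu)$. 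Hence the sum equals $|v\Lambda^{e_i+e_j}w|$. Running the same argument with the roles of $i$ and $j$ reversed shows $(A_jA_i)(v,w) = |v\Lambda^{e_j+e_i}w|$, and since $e_i + e_j = e_j + e_i$ these coincide, giving $A_iA_j = A_jA_i$.

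Once commutativity is in hand, the multi-index $A^n$ is unambiguous, and I would prove the key identity $A^n(v,w) = |v\Lambda^n w|$ for all $n \in \NN^k$ by induction on $|n| = n_1 + \cdots + n_k$. The case $n = 0$ holds because $A^0 = I$ and $v\Lambda^0 w = \{v\}$ when $v = w$ and is empty otherwise. For the inductive step with $n = m + e_i$, commutativity lets me write $A^n = A^m A_i$, so $A^n(v,w) = \sum_u |v\Lambda^m u|\,|u\Lambda^{e_i}w|$, and the same factorisation bijection, now splitting degree $m + e_i$ as $m$ followed by $e_i$, identifies this with $|v\Lambda^n w|$.

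The equivalence then falls out. If $\Lambda$ is strongly connected, choose for each pair $(v,w)$ a path $\lambda_{v,w} \in v\Lambda w$ and put $F = \{d(\lambda_{v,w}) : v,w \in \Lambda^0\}$, which is finite because $\Lambda^0$ is; each entry $A_F(v,w)$ dominates the non-negative term $A^{d(\lambda_{v,w})}(v,w) = |v\Lambda^{d(\lambda_{v,w})}w| \geq 1$, so $A_F$ is positive and the family is irreducible. Conversely, if $A_F$ is positive for some finite $F$, then for each $(v,w)$ the positivity of $A_F(v,w) = \sum_{n \in F}|v\Lambda^n w|$ forces some $n \in F$ with $v\Lambda^n w \neq \emptyset$, whence $v\Lambda w \neq \emptyset$ and $\Lambda$ is strongly connected. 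The only step needing genuine care, and the crux of the whole lemma, is verifying that the factorisation bijection is well-defined and that its pieces are disjoint across intermediate vertices $u$; this is precisely where the \emph{uniqueness} clause of the factorisation property does the work, and it is worth spelling out rather than leaving implicit.
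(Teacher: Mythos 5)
Your proposal is correct and follows essentially the same route as the paper's proof: nonzeroness from the standing convention, commutativity via the factorisation-property count $|v\Lambda^{e_i+e_j}w|$, and the equivalence by choosing $n_{v,w}$ with $v\Lambda^{n_{v,w}}w \neq \emptyset$ in one direction and reading off a nonzero summand of $A_F(v,w)$ in the other. The only difference is that you make explicit (via the bijection and the induction establishing $A^n(v,w) = |v\Lambda^n w|$) what the paper leaves implicit, which is a matter of detail rather than of method.
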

\begin{proof}
The $A_i$ are nonzero by our convention that each $\Lambda^{e_i}$ is nonempty. The
factorisation property of $\Lambda$ ensures that
\[
(A_i A_j)(v, w) = |v\Lambda^{e_i + e_j} w| = (A_j A_i)(v, w),
\]
so the $A_i$ commute.

Suppose that $\Lambda$ is strongly connected. Let $v,w \in \Lambda^0$. There exists
$n_{v,w} \in \NN^k$ such that $v \Lambda^{n_{v,w}} w \not= \emptyset$ because $\Lambda$
is strongly connected. Now $F := \{n_{v,w} : v,w \in \Lambda^0\}$ satisfies $A_F(v,w) \ge
A_{n_{v,w}}(v,w) > 0$ for all $v,w$. Thus $\{A_1, \dots, A_k\}$ is an irreducible family.

Now suppose that $\{A_1, \dots, A_k\}$ is an irreducible family. Choose $F$ such that
$A_F$ is positive. For $v,w \in \Lambda^0$, we have $A_F(v,w) \not= 0$ and so there
exists $n \in F$ such that $|v\Lambda^n w| = A^n(v,w) \not= 0$. So $\Lambda$ is strongly
connected.
\end{proof}

The next corollary sums up how we will use the results of Section~\ref{sec:new PF
matrices}.

\begin{cor}\label{cor-PFgraph}
Let $\Lambda$ be a strongly connected finite $k$-graph. For $1\le i \le k$, let $A_i \in
M_{\Lambda^0}\big([0, \infty)\big)$ be the matrix with entries $A_i(v,w) = |v
\Lambda^{e_i} w|$.
\begin{enumerate}
\item\label{cor-PFgraph0} Each $\rho(A_i)>0$, and for $n\in\NN^k$, we have
    $\rho(A^n)=\prod_i \rho(A_i)^{n_i}>0$.
\item\label{cor-PFgraph1} There exists a unique non-negative vector
    $x^\Lambda\in[0,\infty)^{\Lambda^0}$ with unit $1$-norm such that
    $A_ix^\Lambda=\rho(A_i)x^\Lambda$ for all $1\leq i\leq k$. Moreover,
    $x^\Lambda>0$ in the sense that $x^\Lambda_v>0$ for all $v\in\Lambda^0$.
\item\label{cor-PFgraph2}  If $z \in \CC^{\Lambda^0}$ and $A_i z = \rho(A_i) z$ for
    all $1\leq i\leq k$, then $z \in \CC x^\Lambda$.
\item\label{cor-PFgraph3}  If $y\in [0,\infty)^{\Lambda^0}$ has unit $1$-norm and
    $A_iy\leq \rho(A_i)y$ for all $1\leq i\leq k$, then $y=x^\Lambda$.
\end{enumerate}
\end{cor}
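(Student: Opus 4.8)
The plan is to reduce every assertion to Proposition~\ref{prp:newer PF}. By Lemma~\ref{lem:s.c<->irred}, the coordinate matrices $A_1,\dots,A_k$ are nonzero and pairwise commuting, and, because $\Lambda$ is strongly connected, they form an irreducible family; so the hypotheses of Proposition~\ref{prp:newer PF} are met. I would then fix a finite set $F\subseteq\NN^k$ with $A_F$ positive, let $x$ be the unimodular Perron-Frobenius eigenvector of $A_F$, and define $x^\Lambda:=x$. Each part of the corollary now follows from the like-numbered part of the proposition.

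Concretely: part~\eqref{cor-PFgraph0} combines the positivity $\rho(A_i)>0$ from part~\eqref{item-0-newer} with the product formula $\rho(A^n)=\prod_i\rho(A_i)^{n_i}>0$, which is exactly part~\eqref{prp:newer PF-item3}. For part~\eqref{cor-PFgraph1}, part~\eqref{item-0-newer} gives $A_ix^\Lambda=\rho(A_i)x^\Lambda$ for every $i$, part~\eqref{item-i-newer} supplies uniqueness of $x^\Lambda$ as a non-negative common eigenvector of unit $1$-norm, and the strict positivity $x^\Lambda>0$ is part of the conclusion that $x$ is a Perron-Frobenius eigenvector. Part~\eqref{cor-PFgraph2} is a verbatim restatement of part~\eqref{item-ii-newer}, and part~\eqref{cor-PFgraph3} is part~\eqref{item:subinv-newer2} with $\lambda_i$ taken to be $\rho(A_i)$.

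The only points worth a moment's thought are bookkeeping rather than obstacles. First, the vector $x^\Lambda$ is defined using a choice of $F$, but part~\eqref{item-i-newer} characterises it without reference to $F$ as the unique non-negative unit-norm common eigenvector, so $x^\Lambda$ is independent of that choice. Second, the proposition's uniqueness is stated for common eigenvectors with no eigenvalue constraint, which is formally stronger than the corollary's uniqueness among common eigenvectors with eigenvalues $\rho(A_i)$; since the latter class sits inside the former and contains $x^\Lambda$, the weaker uniqueness is immediate. I therefore expect no genuine difficulty: all the analytic content lives in Proposition~\ref{prp:newer PF} and Lemma~\ref{lem:s.c<->irred}, and the corollary simply transcribes it into the language of coordinate matrices.
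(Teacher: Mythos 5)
Your proposal is correct and follows the same route as the paper: invoke Lemma~\ref{lem:s.c<->irred} to see that the coordinate matrices form an irreducible family, then read off each part of the corollary from the corresponding part of Proposition~\ref{prp:newer PF}, taking $x^\Lambda$ to be the unimodular Perron--Frobenius eigenvector of $A_F$. Your extra remarks on the independence of $x^\Lambda$ from the choice of $F$ and on the comparison of the two uniqueness statements are accurate bookkeeping that the paper leaves implicit.
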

\begin{proof}
Lemma~\ref{lem:s.c<->irred} shows that the $A_i$ are an irreducible family. So
\eqref{cor-PFgraph0} is immediate from parts \eqref{item-0-newer}~and~\eqref{prp:newer
PF-item3} of Proposition~\ref{prp:newer PF}. By Proposition~\ref{prp:newer
PF}\eqref{item-i-newer}, the unimodular Perron-Frobenius eigenvector $x^\Lambda$ of $A_F$
is the unique non-negative, unimodular common eigenvector of the $A_i$. Since $x^\Lambda$
is a Perron-Frobenius eigenvector for an irreducible matrix, $x^\Lambda>0$. This gives
\eqref{cor-PFgraph1}. Parts \eqref{cor-PFgraph2}~and~\eqref{cor-PFgraph3} follow from
parts \eqref{item-ii-newer}~and~\eqref{item:subinv-newer2} of Proposition~\ref{prp:newer
PF} respectively.
\end{proof}

\begin{ex}\label{ex:irreducible family}
Corollary~\ref{cor-PFgraph} is an improvement on \cite[Proposition~7.1]{aHLRSk-graphs},
which applies to finite $k$-graphs that are coordinatewise irreducible in the sense that
each $A_i$ is an irreducible matrix. To see that there are many strongly connected
$k$-graphs that are not coordinatewise irreducible, consider strongly connected
$1$-graphs $E,F$ and suppose that $F$ has at least two vertices. Let $\Lambda$ be the
cartesian-product $2$-graph $\Lambda = E \times F$. The connected components of the
coordinate graph $(\Lambda^0, \Lambda^{(1,0)}, r, s)$ are the sets
\[
E^0 \times \{v\}, \quad v \in F^0.
\]
So $A_1$ is block-diagonal with blocks indexed by $F^0$, and in particular $\Lambda$ is
not coordinatewise irreducible. But it is strongly connected: take $(u_1,v_1), (u_2, v_2)
\in E^0 \times F^0$ and use that $E$ and $F$ are strongly connected to find $\mu \in u_1
E^* u_2$ and $\nu \in v_1 F^* v_2$; then $(\mu,\nu) \in (u_1, v_1)\Lambda (u_2, v_2)$.
\end{ex}

\begin{defn}\label{PF-vector}
Let $\Lambda$ be a strongly connected finite $k$-graph. We call the vector $x^\Lambda$ of
Corollary~\ref{cor-PFgraph} the \emph{unimodular Perron-Frobenius eigenvector} of
$\Lambda$.
\end{defn}

We write $\rho(\Lambda)$ for the vector $\big(\rho(A_i)\big) \in [0,\infty)^k$, and
$\ln\rho(\Lambda)$ for the vector $\big(\ln\rho(A_i)\big) \in [-\infty, \infty)^k$. For
$n\in\NN^k$ we have $A^n x^\Lambda = \rho(\Lambda)^nx^\Lambda$ where $\rho(\Lambda)^n :=
\prod_{i=1}^k\rho(A_i)^{n_i}$ is defined using multi-index notation.

\begin{rmk}
At first glance, Corollary~\ref{cor-PFgraph}, which allows Definition~\ref{PF-vector},
appears very similar to Proposition~7.1 of \cite{aHLRSk-graphs} except that it has a
weaker hypothesis. In Proposition~\ref{prp:newer PF}, however, the individual $A_i$ need
not be irreducible, and so it does not make sense to discuss ``the unique unimodular
Perron-Frobenius eigenvectors of the $A_i$." In \cite[Proposition~7.1]{aHLRSk-graphs},
the $A_i$ and $A^n$ are irreducible, so they each have a unique unimodular
Perron-Frobenius eigenvector; the proposition asserts that these eigenvectors are all
equal. When we apply Proposition~\ref{prp:newer PF} to the family of coordinate matrices
$A_i$ of a strongly connected graph, each $A_i$ may have multiple linearly independent
non-negative eigenvectors. The result asserts that there is a unique non-negative
unimodular eigenvector $x^\Lambda$ common to all the $A_i$, and that the spectral radius
of each $A^n$ is achieved at $x^\Lambda$.
\end{rmk}

We finish the section by using Proposition~\ref{prp:newer PF}  to strengthen
Corollaries~4.3~and~4.4 of~\cite{aHLRSk-graphs}. Recall that a Toeplitz-Cuntz-Krieger
$\Lambda$-family consists of   partial isometries $\{ T_\lambda: \lambda \in \Lambda\}$
satisfying (CK1)--(CK3) and the additional relations
\begin{itemize}
\item [(T4)] $T_v \ge \sum_{\lambda \in v\Lambda^n} T_\lambda T^*_\lambda$ for all $v
    \in \Lambda^0$ and $n \in \NN^k$; and
\item [(T5)] $T_\mu^* T_\nu = \sum_{(\alpha, \beta) \in \Lambda^{\min}(\mu,\nu)}
    T_\alpha T^*_\beta$ for all $\mu,\nu$, where empty sums are interpreted as zero.
\end{itemize}
The Toeplitz algebra $\Tt C^*(\Lambda)$ of $\Lambda$ is generated by a universal
Toeplitz-Cuntz-Krieger $\Lambda$-family $\{t_\lambda\}$. We write $q_v := t_v$ for $v \in
\Lambda^0$.

There is a strongly continuous action $\gamma:\TT^k\to\Aut \Tt C^*(\Lambda)$ such that
$\gamma_z(q_v)=q_v$ and $\gamma_z(t_\lambda)=z^{d(\lambda)}t_\lambda$ for $z\in\TT^k$.
This action is called the \emph{gauge action}.  We use the same letter $\gamma$ for the
gauge actions on $C^*(\Lambda)$ and  $\Tt C^*(\Lambda)$; this is safe because the
quotient map of $\Tt C^*(\Lambda)$ onto $C^*(\Lambda)$ intertwines the two.

\begin{cor}\label{cor:aHLRS2-Cor4.3,4.4}
Suppose that $\Lambda$ is a strongly connected finite $k$-graph. Let $\beta\in
[0,\infty)$. Fix $r \in \RR^k$ and define $\alpha : \RR \to \Aut\Tt C^*(\Lambda)$ by
$\alpha_t = \gamma_{e^{itr}}$.
\begin{enumerate}
\item\label{it:KMSToeplitz} There exists a KMS$_\beta$ state for $(\Tt C^*(\Lambda),
    \alpha)$ if and only if $\beta r \ge \ln \rho(\Lambda)$.
\item\label{it:KMSCK} If there is a KMS$_\beta$ state for $(\Tt C^*(\Lambda),
    \alpha)$ that factors through $C^*(\Lambda)$, then $\beta r = \ln \rho(\Lambda)$.
\item\label{it:allFactor} If $\beta r = \ln\rho(\Lambda)$, then every KMS$_\beta$
    state for $(\Tt C^*(\Lambda), \alpha)$ factors through $C^*(\Lambda)$.
\end{enumerate}
\end{cor}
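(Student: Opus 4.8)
The plan is to run the argument for coordinatewise-irreducible $k$-graphs from \cite{aHLRSk-graphs}, substituting Corollary~\ref{cor-PFgraph} for the classical Perron--Frobenius input at each point where that argument used irreducibility of an individual coordinate matrix. Everything is controlled by one vector: given a KMS$_\beta$ state $\phi$, set $m_v := \phi(q_v)$, so $m \in [0,\infty)^{\Lambda^0}$, and $m$ has unit $1$-norm because $\sum_{v \in \Lambda^0} q_v$ is the identity of $\Tt C^*(\Lambda)$. The key preliminary computation is that each $t_\lambda$ is analytic with $\alpha_{i\beta}(t_\lambda) = e^{-\beta r\cdot d(\lambda)} t_\lambda$, so the KMS condition and (CK3) give $\phi(t_\lambda t_\lambda^*) = e^{-\beta r\cdot d(\lambda)} \phi(t_\lambda^* t_\lambda) = e^{-\beta r\cdot d(\lambda)} m_{s(\lambda)}$. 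Applying $\phi$ to the Toeplitz inequality (T4) with $n=e_i$ then yields
\[
m_v = \phi(q_v) \ge \sum_{\lambda \in v\Lambda^{e_i}} \phi(t_\lambda t_\lambda^*) = e^{-\beta r_i} \sum_{w \in \Lambda^0} |v\Lambda^{e_i}w|\, m_w = e^{-\beta r_i}(A_i m)_v,
\]
that is, $A_i m \le e^{\beta r_i} m$ for every $i$. This single inequality drives all three parts.

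For the forward implication of \eqref{it:KMSToeplitz} I would feed the nonzero vector $m$ and the scalars $\lambda_i = e^{\beta r_i}$ into Proposition~\ref{prp:newer PF}\eqref{it:subinv-newer1}, which returns $e^{\beta r_i} \ge \rho(A_i)$ for all $i$, i.e. $\beta r \ge \ln\rho(\Lambda)$; this is exactly the step that needs only strong connectivity, since $m$ need not be a Perron eigenvector of any single $A_i$. For \eqref{it:KMSCK}, if $\phi$ factors through $C^*(\Lambda)$ then the induced state is still KMS$_\beta$ and now satisfies (CK4), so the inequality above becomes the equality $A_i m = e^{\beta r_i} m$; thus $m$ is a non-negative unimodular common eigenvector of the $A_i$, and Proposition~\ref{prp:newer PF}\eqref{item-i-newer}~and~\eqref{item-0-newer} force $m = x^\Lambda$ with $e^{\beta r_i} = \rho(A_i)$, whence $\beta r = \ln\rho(\Lambda)$. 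For \eqref{it:allFactor}, the hypothesis $\beta r = \ln\rho(\Lambda)$ turns the inequality into $A_i m \le \rho(A_i) m$, and Corollary~\ref{cor-PFgraph}\eqref{cor-PFgraph3} upgrades this to $m = x^\Lambda$ and hence to equality $A_i m = \rho(A_i) m$. Equality says that each positive element $g_{v,i} := q_v - \sum_{\lambda \in v\Lambda^{e_i}} t_\lambda t_\lambda^*$ has $\phi(g_{v,i}) = 0$; a Cauchy--Schwarz estimate then shows $\phi$ annihilates the closed two-sided ideal generated by the $g_{v,i}$, and since the quotient of $\Tt C^*(\Lambda)$ by that ideal is $C^*(\Lambda)$, the state $\phi$ factors through $C^*(\Lambda)$.

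The substantive analytic work, and the step I expect to be the main obstacle, is the existence (``if'') half of \eqref{it:KMSToeplitz}. For $\beta r > \ln\rho(\Lambda)$ I would take the Fock representation of $\Tt C^*(\Lambda)$ on $\ell^2(\Lambda)$ and the diagonal state with weights proportional to $e^{-\beta r\cdot d(\lambda)} x^\Lambda_{s(\lambda)}$; summability of these weights is the assertion $\sum_{n \in \NN^k} e^{-\beta r\cdot n}\rho(\Lambda)^n < \infty$, which holds because $\rho(A^n) = \rho(\Lambda)^n$ by Corollary~\ref{cor-PFgraph}\eqref{cor-PFgraph0} and each $e^{-\beta r_i}\rho(A_i) < 1$, and one checks the KMS condition directly on the spanning elements $t_\mu t_\nu^*$. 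The delicate case is the critical boundary $\beta r = \ln\rho(\Lambda)$, where these weights are no longer summable and the Fock construction collapses; here I would instead use the gauge-invariant functional determined on $C^*(\Lambda)$ by $\phi(s_\mu s_\nu^*) = \delta_{\mu,\nu}\, \rho(\Lambda)^{-d(\mu)} x^\Lambda_{s(\mu)}$, whose positivity relies on $x^\Lambda > 0$ and on $A_i x^\Lambda = \rho(A_i) x^\Lambda$ from Corollary~\ref{cor-PFgraph}\eqref{cor-PFgraph1}, and pull it back along the quotient map $\Tt C^*(\Lambda) \to C^*(\Lambda)$. Since this construction never uses irreducibility of the individual $A_i$, it can be imported wholesale from \cite{aHLRSk-graphs}, and combining the two regimes gives existence throughout $\beta r \ge \ln\rho(\Lambda)$.
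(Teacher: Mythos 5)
Your treatment of part~(\ref{it:KMSCK}), part~(\ref{it:allFactor}) and the ``only if'' half of part~(\ref{it:KMSToeplitz}) is essentially the paper's argument: the single subinvariance inequality $A_i m \le e^{\beta r_i} m$ extracted from (T4) and the KMS condition, fed into Proposition~\ref{prp:newer PF}\eqref{it:subinv-newer} and Corollary~\ref{cor-PFgraph}\eqref{cor-PFgraph3}, does all the work, and your Cauchy--Schwarz argument for (\ref{it:allFactor}) is just an unpacking of the citation the paper uses at that point. The problem is the existence (``if'') half of (\ref{it:KMSToeplitz}).

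The condition $\beta r \ge \ln\rho(\Lambda)$ is a \emph{coordinatewise} inequality, so the region it describes is not exhausted by your two cases ``$\beta r > \ln\rho(\Lambda)$ in every coordinate'' and ``$\beta r = \ln\rho(\Lambda)$''. In the mixed boundary case --- say $\beta r_1 = \ln\rho(A_1)$ but $\beta r_2 > \ln\rho(A_2)$ --- your Fock-space weights are not summable (the geometric factor for the first coordinate diverges), and the pulled-back state with $\phi(s_\mu s^*_\nu) = \delta_{\mu,\nu}\rho(\Lambda)^{-d(\mu)}x^\Lambda_{s(\mu)}$ is \emph{not} a KMS$_\beta$ state for $\alpha^r$, since the KMS condition forces $\phi(t_\mu t^*_\mu) = e^{-\beta r\cdot d(\mu)}x^\Lambda_{s(\mu)}$, which matches that formula only when $\beta r = \ln\rho(\Lambda)$ exactly. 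So neither of your constructions produces a state in the mixed case, and the claim is left unproved there. The paper avoids this by a completely different mechanism: it perturbs to dynamics $\alpha^{r_n}$ with $\beta_n r_n > \ln\rho(\Lambda)$ strictly, invokes the earlier existence theorem \cite[Theorem~6.1]{aHLRSk-graphs} in that open region, and takes a weak-$*$ limit of the resulting KMS states using \cite[Proposition~5.3.25]{BR}; this handles every boundary point uniformly. A secondary issue is your claim that the critical-temperature construction ``can be imported wholesale from \cite{aHLRSk-graphs}'': the existence result there is proved under coordinatewise irreducibility, and in the present paper the existence of a KMS state of $C^*(\Lambda)$ at the critical inverse temperature is only established much later (Proposition~\ref{prp:integral KMS}), after the measure-theoretic work of Section~\ref{sec:measures}. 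That part of your argument could be patched (the gauge-invariant diagonal state does exist, via an AF-core argument using $A_i x^\Lambda = \rho(A_i)x^\Lambda$), but the mixed boundary case is a genuine hole that needs either the paper's limiting argument or a separate construction.
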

\begin{proof}
(\ref{it:KMSToeplitz}) First suppose that $\phi$ is a KMS$_\beta$ state.  Let $v
\in\Lambda^0$ and set $m_v^\phi:=\phi(q_v)$.  For $1\le i \le k$, relation~(T4) and the
KMS condition give
\begin{align}
0
&\le \phi\Big(q_v - \sum_{\lambda \in v\Lambda^{e_i}} t_\lambda t^*_\lambda\Big)
    = \phi(q_v) - \sum_{w \in \Lambda^0} |v\Lambda^{e_i} w| e^{-\beta r_i} \phi(t^*_\lambda t_\lambda)
        \label{eq:delta value} \\
&=\phi(q_v)  - e^{-\beta r_i} \sum_{w \in \Lambda^0} A(v,w)\phi(t_{s(\lambda)})
    = \big(m^\phi - e^{-\beta r_i} A_i m^\phi\big)_v.\notag
\end{align}
Hence $A_i m^\phi \le e^{\beta r_i} m^\phi$ for each $i$. Now Proposition~\ref{prp:newer
PF}(\ref{it:subinv-newer}), applied to $A_i$, $e^{\beta r_i}$ and $m^\phi$,  implies that
each $e^{\beta r_i} \ge \rho(A_i)$. Thus $\beta r\ge \ln\rho(\Lambda)$.

Second, suppose that $\beta r\ge \ln\rho(\Lambda)$. Choose a sequence $\{r_n\}$ in
$\RR^k$ converging to $r$ from above and a sequence $\beta_n$ converging to $\beta$ from
above. So each $\beta_n r_n > \ln\rho(\Lambda)$. For each $n$ let $\alpha^{r_n}$ be the
dynamics $\alpha^{r_n}_t = \gamma_{e^{itr_n}}$. By \cite[Theorem~6.1]{aHLRSk-graphs}
there exists, for each $n$, a KMS$_{\beta_n}$ state $\phi_n$ of $(\Tt C^*(\Lambda),
\alpha^{r_n})$. We have $\alpha^{r_n}_t(t_\mu t^*_\nu) \to \alpha_t(t_\mu t^*_\nu)$ for
all $\mu,\nu$, and so an $\varepsilon/3$ argument shows that $\|\alpha^{r_n}_t(a) -
\alpha_t(a)\| \to 0$ for all $a$. Now \cite[Proposition~5.3.25]{BR} shows that $(\Tt
C^*(\Lambda), \alpha)$ has a KMS$_\beta$ state.

(\ref{it:KMSCK}) Suppose that $\phi$ is a KMS$_\beta$ state of $(\Tt C^*(\Lambda),
\alpha)$ that factors through $C^*(\Lambda)$. Then we have equality in~\eqref{eq:delta
value}, and so $m^\phi$ is a unimodular non-negative eigenvector of each $A_i$ with
eigenvalue $e^{\beta r_i}$. Thus Proposition~\ref{prp:newer
PF}(\ref{item-i-newer})~and~(\ref{item-0-newer}) imply that $e^{\beta r_i} = \rho(A_i)$
for each $i$. Thus $\beta r=\ln\rho(\Lambda)$.

(\ref{it:allFactor}) Suppose that $\beta r = \ln\rho(\Lambda)$ and that $\phi$ is a
KMS$_\beta$ state of $(\Tt C^*(\Lambda), \alpha)$. Then~\eqref{eq:delta value} shows that
$\rho(\Lambda)_i m^\phi \ge A_i m^\phi$ for each $i$. Now
Corollary~\ref{cor-PFgraph}(\ref{cor-PFgraph3}) implies that $m^\phi = x^\Lambda$, and
hence $e^{\beta r_i} m^\phi = \rho(\Lambda)_i m^\phi = A_i m^\phi$ for all $i$. Hence
\cite[Proposition~4.1(b)]{aHLRSk-graphs} implies that $\phi$ factors through
$C^*(\Lambda)$.
\end{proof}

\begin{rmk} From the point of view developed by Bratteli, Elliott and Kishimoto \cite{BEK}, the
collection $\operatorname{Lie}(\TT^k)$ of continuous homomorphisms from $\RR$ to $\TT^k$
is the collection of possible finite inverse temperatures for KMS states for the gauge
action $\gamma$. A KMS state for $\gamma$ at inverse temperature $\beta \in
\operatorname{Lie}(\TT^k)$ is then a KMS$_1$ state for the action $\gamma \circ \beta$ of
$\RR$.

Embed $\RR^k$ in $\operatorname{Lie}(\TT^k)$ via $\beta \mapsto (t \mapsto e^{i\beta
t})$. Corollary~\ref{cor:aHLRS2-Cor4.3,4.4}(\ref{it:KMSToeplitz}) says that the gauge
action on $\Tt C^*(\Lambda)$ admits a KMS state at inverse temperature $\beta \in \RR^k$
if and only if $\beta \in \big[\ln\rho(A_1), \infty\big) \times \cdots \times
\big[\ln\rho(A_k), \infty\big)$. Corollary~\ref{cor:aHLRS2-Cor4.3,4.4}(\ref{it:KMSCK})
says that the KMS states that factor through $C^*(\Lambda)$ are those at inverse
temperature $\ln\rho(\Lambda)$. So from the point of view of \cite{BEK},
Corollary~\ref{cor:aHLRS2-Cor4.3,4.4} identifies $\beta = \ln\rho(\Lambda)$ as the
critical inverse temperature for~$\gamma$.
\end{rmk}

\section{Periodicity of $k$-graphs}\label{sec:per}

In this section we describe the  periodicity group of a strongly connected finite
$k$-graph $\Lambda$. This group is a key ingredient in the statement of our main theorem.
The fundamental idea behind our analysis involves source- and range-preserving bijections
between certain sets of paths, and comes from Davidson and Yang's analysis of periodicity
in 2-graphs with one vertex~\cite{DY}.

Our results in this section and the next also follow from the more general results of
\cite{CarlsenKangEtAl:xxxx} (see also \cite{Yang:xx14}). Specifically,
Lemma~\ref{lem:per->bijection} and Proposition~\ref{prp:per group} follows from
\cite[Theorem~4.2(1)--(3)]{CarlsenKangEtAl:xxxx}; and Lemma~\ref{lem:unitary} and
Proposition~\ref{prp:unitary group} follow (with some effort) from
\cite[Theorem~4.2(5)~and~Proposition~3.3]{CarlsenKangEtAl:xxxx}. However, a simpler
direct argument works for strongly connected finite $k$-graphs, and we present that
instead.

To state our results we must briefly discuss infinite paths in $k$-graphs. The set
\[
    \Omega_k:=\{(m,n) \in \NN^k \times \NN^k : m \le n\}
\]
becomes a $k$-graph  with operations $r(m,n) = (m,m)$, $s(m,n) = (n,n)$, $(m,n)(n,p) =
(m,p)$ and $d(m,n) = n - m$. We identify $\Omega_k^0$ with $\NN^k$ via $(m,m) \mapsto m$.
An \emph{infinite path} in a $k$-graph $\Lambda$ is a functor $x : \Omega_k \to \Lambda$
that intertwines the degree maps. We write $\Lambda^\infty$ for the collection of all
infinite paths and call this the \emph{infinite-path space} of $\Lambda$. For $x \in
\Lambda^\infty$ we write $r(x)$ for $x(0)$. For $n \in \NN^k$, we write $\sigma^n(x)$ for
the infinite path such that $\sigma^n(x)(p,q) = x(n+p, n+q)$. If $r(x) = s(\lambda)$,
then there is a unique infinite path $\lambda x$ such that $(\lambda x)(0,d(\lambda)) =
\lambda$ and $\sigma^{d(\lambda)}(\lambda x) = x$. For $\lambda \in \Lambda$ we define
$Z(\lambda) = \{x \in \Lambda^\infty : x(0, d(\lambda)) = \lambda\}$. If $\Lambda$ has no
sources, then each $Z(\lambda)$ is nonempty.

We say $\Lambda$ is \emph{aperiodic} if for each $v \in \Lambda^0$, there exists $x \in
Z(v)$ such that for all $m\neq n\in\NN^k$ we have $\sigma^m(x) \not= \sigma^n(x)$. By
\cite[Lemma~3.2]{RobertsonSims:BLMS07}, $\Lambda$ is aperiodic if and only if there do
not exist $v \in \Lambda^0$ and $m \not= n \in \NN^k$ such that $\sigma^m(x) =
\sigma^n(x)$ for all $x \in Z(v)$.

\begin{lem}\label{lem:per->bijection}
Let $\Lambda$ be a strongly connected finite $k$-graph. Suppose that $v \in \Lambda^0$
amd $m,n \in \NN^k$ satisfy $\sigma^m(x) = \sigma^n(x)$ for all $x \in Z(v)$.
\begin{enumerate}
\item\label{it:lp->Per} For all $x \in \Lambda^\infty$ we have $\sigma^m(x) =
    \sigma^n(x)$.
\item\label{it:theta exists} For each $\mu \in \Lambda^m$ there exists a unique
    $\theta_{m,n}(\mu) \in \Lambda^n$ such that $\mu x = \theta_{m,n}(\mu) x$ for all
    $x \in Z(s(\mu))$. The map $\theta_{m,n} : \Lambda^m\to \Lambda^n$ is range- and
    source-preserving.
\item\label{it:theta composition} If $w \in \Lambda^0$ and $p \in \NN^k$ also satisfy
    $\sigma^n(x) = \sigma^p(x)$ for all $x \in Z(w)$, then $\sigma^m(x) =
    \sigma^p(x)$ for all $x \in \Lambda^\infty$, and
    $\theta_{n,p}\circ\theta_{m,n}=\theta_{m,p}$.
\item\label{it:theta prp} Each $\theta_{m,m} : \Lambda^m \to \Lambda^m$ is the
    identity map, and each $\theta_{m,n} : \Lambda^m \to \Lambda^n$ is a bijection
    with $\theta_{m,n}^{-1}=\theta_{n,m}$.
\end{enumerate}
\end{lem}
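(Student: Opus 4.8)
The plan is to prove \eqref{it:lp->Per} first, then to read the map $\theta_{m,n}$ off it, and finally to obtain \eqref{it:theta composition} and \eqref{it:theta prp} as formal consequences of a uniqueness argument. For \eqref{it:lp->Per} I would move from the single vertex $v$ to an arbitrary $y\in\Lambda^\infty$ using strong connectedness: choose $\lambda\in v\Lambda r(y)$, so that $\lambda y\in Z(v)$ and the hypothesis gives $\sigma^m(\lambda y)=\sigma^n(\lambda y)$. Applying $\sigma^{d(\lambda)}$ to both sides and using $\sigma^{d(\lambda)}\circ\sigma^p=\sigma^{p+d(\lambda)}$ together with $\sigma^{d(\lambda)}(\lambda y)=y$ collapses this to $\sigma^m(y)=\sigma^n(y)$. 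Thus $\sigma^m=\sigma^n$ on all of $\Lambda^\infty$.

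For \eqref{it:theta exists}, fix $\mu\in\Lambda^m$; by Lemma~\ref{lem-nosources} the set $Z(s(\mu))$ is nonempty. For $x\in Z(s(\mu))$ the path $\mu x$ is defined with $\sigma^m(\mu x)=x$, so part~\eqref{it:lp->Per} forces $\sigma^n(\mu x)=x$ and hence $\mu x=(\mu x)(0,n)\cdot x$. This identifies $(\mu x)(0,n)$ as the only possible value of $\theta_{m,n}(\mu)$, and reading off endpoints gives $r((\mu x)(0,n))=r(\mu)$ and $s((\mu x)(0,n))=r(x)=s(\mu)$, so any such $\theta_{m,n}$ is automatically range- and source-preserving. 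Uniqueness is then immediate, since $\nu=(\nu x)(0,n)=(\mu x)(0,n)$ is forced for any $\nu$ satisfying $\mu x=\nu x$. The real content of the statement is that $(\mu x)(0,n)$ does \emph{not} depend on the choice of $x$, and I expect this to be the main obstacle.

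To attack the independence I would first peel off the common initial segment: with $c=m\wedge n$ one has $(\mu x)(0,c)=\mu(0,c)$ for every $x$, so it suffices to treat the tail and thereby reduce to the case $m\wedge n=0$. After relabelling so that $m$ and $n$ have disjoint supports, the claim becomes the purely combinatorial assertion that the degree-$n$ prefix $(\mu\zeta)(0,n)$ is independent of $\zeta\in s(\mu)\Lambda^n$. This is the hard step: the factorisation property \emph{alone} makes this prefix depend on $\zeta$, and it is the global identity $\sigma^m=\sigma^n$ from part~\eqref{it:lp->Per}, fed back through the factorisation squares and propagated using finiteness and strong connectedness, that must force the relevant factorisations to be rigid. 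A clean special case is $m\le n$, where surjectivity of $\sigma^{n-m}$ (Lemma~\ref{lem-nosources}) turns the hypothesis into $\sigma^{n-m}=\operatorname{id}$ on all of $\Lambda^\infty$; since no nonconstant loop can be a fixed point, this already forces $|s(\mu)\Lambda^{n-m}|=1$ and the independence is trivial. The general argument must upgrade this rigidity from the ordered case to mixed $m,n$.

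Finally, \eqref{it:theta composition} and \eqref{it:theta prp} are formal. Applying part~\eqref{it:lp->Per} to both hypotheses of \eqref{it:theta composition} gives $\sigma^m=\sigma^n=\sigma^p$ on $\Lambda^\infty$, so $\theta_{m,p}$ exists; chaining $\mu x=\theta_{m,n}(\mu)x=\theta_{n,p}(\theta_{m,n}(\mu))x$ for all $x\in Z(s(\mu))$ and invoking the uniqueness from \eqref{it:theta exists} yields $\theta_{n,p}\circ\theta_{m,n}=\theta_{m,p}$. For \eqref{it:theta prp}, uniqueness gives $\theta_{m,m}=\operatorname{id}$ (since $\mu x=\mu x$); and because $\sigma^n=\sigma^m$ on $\Lambda^\infty$ the map $\theta_{n,m}$ also exists, so applying \eqref{it:theta composition} with $p=m$ gives $\theta_{n,m}\circ\theta_{m,n}=\theta_{m,m}=\operatorname{id}$, and symmetrically $\theta_{m,n}\circ\theta_{n,m}=\operatorname{id}$. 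Hence $\theta_{m,n}$ is a bijection with inverse $\theta_{n,m}$.
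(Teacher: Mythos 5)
Your treatment of part~(\ref{it:lp->Per}) is exactly the paper's argument, and your derivations of (\ref{it:theta composition}) and~(\ref{it:theta prp}) from the uniqueness clause of~(\ref{it:theta exists}) also match the paper. The problem is part~(\ref{it:theta exists}) itself: you correctly identify that with your definition $\theta_{m,n}(\mu):=(\mu x)(0,n)$ the whole content is the independence of the choice of $x\in Z(s(\mu))$, but you do not prove it. Your reduction to $m\wedge n=0$ is fine, and your rigidity argument in the ordered case $m\le n$ (where $\sigma^{n-m}=\operatorname{id}$ forces $|s(\mu)\Lambda^{n-m}|=1$) can be made to work, but the genuinely mixed case --- $m$ and $n$ nonzero with disjoint supports --- is exactly where the difficulty lives, and ``the general argument must upgrade this rigidity'' is a placeholder, not a proof. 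As it stands the existence of $\theta_{m,n}$ is not established.

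The paper avoids the independence question entirely by building $\theta_{m,n}(\mu)$ out of a \emph{finite} path rather than an infinite one. By Lemma~\ref{lem-nosources}(\ref{lem-nosources-item2}) there is $\alpha\in\Lambda^n r(\mu)$; factorise $\alpha\mu=\beta\,\theta_{m,n}(\mu)$ with $d(\beta)=m$ and $d(\theta_{m,n}(\mu))=n$, i.e.\ set $\theta_{m,n}(\mu):=(\alpha\mu)(m,m+n)$. This definition makes no reference to any infinite path. Then for every $x\in Z(s(\mu))$, part~(\ref{it:lp->Per}) applied to $\alpha\mu x$ gives
\[
\mu x=\sigma^n(\alpha\mu x)=\sigma^m(\alpha\mu x)=\sigma^m\big(\beta\,\theta_{m,n}(\mu)\,x\big)=\theta_{m,n}(\mu)\,x,
\]
which handles all $m,n$ uniformly, with uniqueness following because the sets $Z(\nu)$, $\nu\in\Lambda^n$, are disjoint. (This also shows a posteriori that your $(\mu x)(0,n)$ is independent of $x$, since it must equal $(\alpha\mu)(m,m+n)$.) You need this one idea --- prepend a degree-$n$ path and refactor --- to close the gap; everything else in your outline then goes through.
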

\begin{proof}
(\ref{it:lp->Per}) Fix $x \in \Lambda^\infty$. Since $\Lambda$ is strongly connected, $v
\Lambda r(x)$ has at least one element, say $\lambda$. So $\lambda x \in Z(v)$, and hence
\[
\sigma^m(x)
    = \sigma^{m + d(\lambda)}(\lambda x)
        = \sigma^{d(\lambda)}(\sigma^m(\lambda x))
        = \sigma^{d(\lambda)}(\sigma^n(\lambda x))
        = \sigma^{n + d(\lambda)}(\lambda x)
        = \sigma^n(x).
\]

(\ref{it:theta exists}) Fix $\mu \in \Lambda^m$. Since $\Lambda$ is strongly connected,
Lemma~\ref{lem-nosources}(\ref{lem-nosources-item2}) shows that there exists $\alpha \in
\Lambda^n r(\mu)$. Let $\beta := (\alpha\mu)(0, m)$ and let $\theta_{m,n}(\mu) :=
(\alpha\mu)(m, m+n)$. Fix $x \in Z(s(\mu))$. By (\ref{it:lp->Per}) applied to $\alpha\mu
x$,
\[
\mu x
    = \sigma^n(\alpha\mu x)
    = \sigma^m(\alpha\mu x)
    = \sigma^m(\beta \theta_{m,n}(\mu) x)
    = \theta_{m,n}(\mu) x.
\]
This implies in particular that $Z(\mu) = Z(\theta_{m,n}(\mu))$. Since the sets $Z(\nu)$
for $\nu \in \Lambda^n$ are mutually disjoint, $\theta_{m,n}(\mu)$ is the unique element
of $\Lambda^n$ such that $\mu x = \theta_{m,n}(\mu) x$.

This gives a function $\theta_{m,n}:\Lambda^m\to \Lambda^n$. We have
$s(\theta_{m,n}(\mu)) = s(\mu)$ and $r(\theta_{m,n}(\mu)) = r(\mu)$ by construction.

(\ref{it:theta composition}) Two applications of part~(\ref{it:lp->Per}) show that
$\sigma^m(x) = \sigma^n(x) = \sigma^p(x)$ for all $x \in \Lambda^\infty$. Let
$\mu\in\Lambda^m$ and $x\in Z(s(\mu))$. Then
\[
\theta_{n,p}(\theta_{m,n}(\mu))x=\theta_{m,n}(\mu)x=\mu x=\theta_{m,p}(\mu)x.
\]
Thus $\theta_{n,p}\circ\theta_{m,n}=\theta_{m,p}$ by the uniqueness assertion in
\eqref{it:theta exists}.

(\ref{it:theta prp}) Uniqueness in part~(\ref{it:theta exists}) shows that
$\theta_{m,m}(\mu) = \mu$ for all $\mu \in \Lambda^m$. Now $\theta_{n,m} \circ
\theta_{m,n} = \theta_{m,m} = \id_{\Lambda^m}$ by~(\ref{it:theta composition}), and
likewise $\theta_{m,n} \circ \theta_{n,m} = \id_{\Lambda^n}$.
\end{proof}

\begin{prop}\label{prp:per group}
Suppose that $\Lambda$ is a strongly connected finite $k$-graph.
\begin{enumerate}
\item\label{it:independent} Let $m,n,p,q\in\NN^k$. Suppose that $\sigma^m(x) =
    \sigma^n(x)$ for all $x \in \Lambda^\infty$. If $p - q = m - n$, then
    $\sigma^p(x) = \sigma^q(x)$ for all $x \in \Lambda^\infty$.
\item\label{it:group} The set
    \[
        \Per\Lambda := \{m - n : m,n \in \NN^k, \sigma^m(x) = \sigma^n(x)\text{ for all $x \in \Lambda^\infty$}\}
    \]
    is a subgroup of $\ZZ^k$.
\item\label{it:theta ext} Suppose that $m - n \in \Per\Lambda$ and that $\mu \in
    \Lambda^m$. Then $\theta_{d(\alpha) + m,\, d(\alpha) + n}(\alpha \mu) = \alpha
    \theta_{m,n}(\mu)$ and $\theta_{m + d(\beta),\, n + d(\beta)}(\mu\beta) =
    \theta_{m,n}(\mu)\beta$ for all $\alpha \in \Lambda r(\mu)$ and $\beta \in
    s(\mu)\Lambda$.
\end{enumerate}
\end{prop}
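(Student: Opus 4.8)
The plan is to handle the three parts in order, relying throughout on the elementary forward-shift identity $\sigma^{a+j}=\sigma^j\circ\sigma^a$ on $\Lambda^\infty$: if $\sigma^a=\sigma^b$ as maps on $\Lambda^\infty$ and $j\in\NN^k$, then applying $\sigma^j$ gives $\sigma^{a+j}=\sigma^{b+j}$. For part~(\ref{it:independent}), set $d:=m-n=p-q$ and $M:=m\vee p$. First I would use forward-shift invariance with $j=M-m\ge 0$ to promote the hypothesis $\sigma^m=\sigma^n$ to $\sigma^M=\sigma^{M-d}$, noting that $M-d=n+(M-m)\in\NN^k$. Writing $t:=M-p\ge 0$, one checks $M=p+t$ and $M-d=q+t$, so $\sigma^{p+t}=\sigma^{q+t}$ on $\Lambda^\infty$. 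The remaining cancellation of the common shift $t$ is the crux, and it is where strong connectivity enters: given an arbitrary $x\in\Lambda^\infty$, Lemma~\ref{lem-nosources}(\ref{lem-nosources-item2}) lets me choose $\lambda\in\Lambda^t r(x)$ and form $\tilde x:=\lambda x$, so that $\sigma^t(\tilde x)=x$. Then $\sigma^p(x)=\sigma^{p+t}(\tilde x)=\sigma^{q+t}(\tilde x)=\sigma^q(x)$, as required. I expect this to be the one genuinely substantial step: shifts destroy the head of a path and so cannot be cancelled path-by-path, but the ``for all $x$'' quantifier together with the ability to prepend a degree-$t$ path makes the cancellation immediate.

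For part~(\ref{it:group}) I would verify the three subgroup axioms directly. The zero vector lies in $\Per\Lambda$ (take $m=n$), and $\Per\Lambda$ is closed under negation because $\sigma^m=\sigma^n$ is symmetric in $m$ and $n$. For closure under addition, given $d_i=m_i-n_i\in\Per\Lambda$ with $\sigma^{m_i}=\sigma^{n_i}$ for $i=1,2$, two applications of forward-shift invariance give $\sigma^{m_1+m_2}=\sigma^{m_2+n_1}=\sigma^{n_1+n_2}$, whence $d_1+d_2=(m_1+m_2)-(n_1+n_2)\in\Per\Lambda$. Part~(\ref{it:independent}) guarantees that membership depends only on the difference, so these representatives may be chosen freely.

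For part~(\ref{it:theta ext}) I would argue from the uniqueness clause of Lemma~\ref{lem:per->bijection}(\ref{it:theta exists}). Since $m-n\in\Per\Lambda$ we have $\sigma^m=\sigma^n$ on all of $\Lambda^\infty$, and each of $\theta_{m,n}$, $\theta_{d(\alpha)+m,\,d(\alpha)+n}$ and $\theta_{m+d(\beta),\,n+d(\beta)}$ is defined because the relevant degree pair again has difference $m-n\in\Per\Lambda$. For the first identity, the candidate $\alpha\theta_{m,n}(\mu)$ has degree $d(\alpha)+n$ and is composable because $\theta_{m,n}$ preserves ranges; for $x\in Z(s(\mu))$ the defining relation for $\theta_{m,n}$ gives $(\alpha\mu)x=\alpha(\mu x)=\alpha(\theta_{m,n}(\mu)x)=(\alpha\theta_{m,n}(\mu))x$, so uniqueness forces $\theta_{d(\alpha)+m,\,d(\alpha)+n}(\alpha\mu)=\alpha\theta_{m,n}(\mu)$. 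The second identity is symmetric: $\theta_{m,n}(\mu)\beta$ has degree $n+d(\beta)$ and is composable because $\theta_{m,n}$ preserves sources, and for $y\in Z(s(\beta))$ one has $\beta y\in Z(s(\mu))$, so applying the defining relation of $\theta_{m,n}$ to the infinite path $\beta y$ yields $(\mu\beta)y=\mu(\beta y)=\theta_{m,n}(\mu)(\beta y)=(\theta_{m,n}(\mu)\beta)y$, and uniqueness again gives the claim. These verifications are routine bookkeeping once part~(\ref{it:independent}) is in hand.
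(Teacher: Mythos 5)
Your proposal is correct and follows essentially the same route as the paper: the key step in part~(\ref{it:independent}) is, in both cases, to use Lemma~\ref{lem-nosources}(\ref{lem-nosources-item2}) to prepend a path and thereby cancel a common shift (the paper prepends a path of degree $m$ and computes directly, while you first normalise to $M=m\vee p$ and cancel a shift by $t=M-p$ --- a cosmetic difference), and parts (\ref{it:group})~and~(\ref{it:theta ext}) are handled by the same forward-shift identities and the uniqueness clause of Lemma~\ref{lem:per->bijection}(\ref{it:theta exists}) exactly as in the paper.
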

\begin{proof}
(\ref{it:independent}). Fix $x \in \Lambda^\infty$.
Lemma~\ref{lem-nosources}(\ref{lem-nosources-item2}) shows that there exists $\alpha \in
\Lambda^m r(x)$. We calculate:
\[
\sigma^p(x)
    = \sigma^{p+m}(\alpha x)
    = \sigma^m(\sigma^p(\alpha x))
    = \sigma^n(\sigma^p(\alpha x))
    = \sigma^{n+p}(\alpha x)
    = \sigma^{m+q}(\alpha x)
    = \sigma^q(x).
\]

(\ref{it:group}) We  have $0 \in \Per\Lambda$, and $-p \in \Per\Lambda$ whenever $p \in
\Per\Lambda$. If $m - n, p - q \in \Per\Lambda$, then for $x \in \Lambda^\infty$,
$\sigma^{p+m}(x) = \sigma^p(\sigma^m(x)) = \sigma^p(\sigma^n(x)) = \sigma^q(\sigma^n(x))
= \sigma^{q+n}(x)$. Thus $\Per\Lambda$ is closed under addition, and hence is a subgroup
of $\ZZ^k$.

(\ref{it:theta ext}) Let $\alpha \in \Lambda r(\mu)$, and fix $x \in Z(s(\mu))$. The
defining property of $\theta_{m,n}(\mu)$ implies that $\theta_{m,n}(\mu) x = \mu x$, and
hence $ \alpha\theta_{m,n}(\mu) x = \alpha\mu x$. Uniqueness in
Lemma~\ref{lem:per->bijection}(\ref{it:theta exists}) gives $\theta_{d(\alpha) + m,\,
d(\alpha) + n}(\alpha\mu) = \alpha \theta_{m,n}(\mu)$. A similar argument shows that
$\theta_{m + d(\beta),\, n + d(\beta)}(\mu\beta) = \theta_{m,n}(\mu)\beta$ for all $\beta
\in s(\mu)\Lambda$.
\end{proof}

\begin{cor}\label{cor:lmin<->theta}
Suppose that $\Lambda$ is a strongly connected finite $k$-graph. Suppose that $m - n \in
\Per\Lambda$ and $\mu \in \Lambda^m$. Let $p := (m \vee n) - m$ and $q := (m \vee n) -
n$. Then
\[
\Lambda^{\min}(\theta_{m,n}(\mu), \mu)
    = \{(\alpha, \theta_{q,p}(\alpha)) : \alpha \in s(\mu)\Lambda^q\}.
\]
\end{cor}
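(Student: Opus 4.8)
The plan is to prove the two inclusions separately, after first recording the elementary bookkeeping. Since $\theta_{m,n} : \Lambda^m \to \Lambda^n$ is range- and source-preserving (Lemma~\ref{lem:per->bijection}(\ref{it:theta exists})), we have $d(\theta_{m,n}(\mu)) = n$ and $s(\theta_{m,n}(\mu)) = s(\mu)$. Recalling the definition, $\Lambda^{\min}(\theta_{m,n}(\mu),\mu)$ consists of the pairs $(\alpha,\beta)$ with $\theta_{m,n}(\mu)\alpha = \mu\beta \in \Lambda^{n \vee m}$. I will use throughout that $n \vee m = m \vee n$, that $n + q = m + p = m \vee n$, and that $q - p = (m\vee n) - n - \big((m\vee n) - m\big) = m - n \in \Per\Lambda$, so that $\theta_{q,p} : \Lambda^q \to \Lambda^p$ is defined by Proposition~\ref{prp:per group}.

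For the inclusion $\supseteq$, I would fix $\alpha \in s(\mu)\Lambda^q$ and check directly that $(\alpha, \theta_{q,p}(\alpha))$ is a minimal pair. The composite $\mu\alpha$ makes sense because $r(\alpha) = s(\mu)$, and I can feed it into Proposition~\ref{prp:per group}(\ref{it:theta ext}) from both sides: appending $\alpha$ to $\mu$ gives $\theta_{m+q,\,n+q}(\mu\alpha) = \theta_{m,n}(\mu)\,\alpha$, while prepending $\mu$ to $\alpha$ gives $\theta_{m+q,\,m+p}(\mu\alpha) = \mu\,\theta_{q,p}(\alpha)$. Since $n + q = m + p = m \vee n$, the two left-hand sides are the same map applied to the same path, whence $\theta_{m,n}(\mu)\,\alpha = \mu\,\theta_{q,p}(\alpha)$. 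This common path has degree $n + q = m \vee n$, so $(\alpha, \theta_{q,p}(\alpha)) \in \Lambda^{\min}(\theta_{m,n}(\mu),\mu)$.

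For the reverse inclusion $\subseteq$, I would take $(\alpha, \beta) \in \Lambda^{\min}(\theta_{m,n}(\mu),\mu)$ and show it has the advertised form. From $\theta_{m,n}(\mu)\alpha \in \Lambda^{m\vee n}$ and $d(\theta_{m,n}(\mu)) = n$ I read off $d(\alpha) = (m\vee n) - n = q$, and composability forces $r(\alpha) = s(\theta_{m,n}(\mu)) = s(\mu)$, so $\alpha \in s(\mu)\Lambda^q$; symmetrically $d(\beta) = p$. By the $\supseteq$ computation, $\mu\,\theta_{q,p}(\alpha) = \theta_{m,n}(\mu)\alpha = \mu\beta$, and since $\beta$ and $\theta_{q,p}(\alpha)$ both lie in $\Lambda^p$, the uniqueness in the factorisation property applied to the degree-$(m,p)$ decomposition of $\mu\beta = \mu\,\theta_{q,p}(\alpha)$ yields $\beta = \theta_{q,p}(\alpha)$. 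This gives the claimed equality of sets.

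I expect no serious obstacle, as the content is entirely the functoriality of $\theta$ recorded in Proposition~\ref{prp:per group}(\ref{it:theta ext}). The one point demanding care is the index bookkeeping: verifying that the two instances of Proposition~\ref{prp:per group}(\ref{it:theta ext}) really do produce the \emph{same} map $\theta_{m+q,\,m\vee n}$ (i.e. that $n+q = m+p = m\vee n$), and checking that all the range/source conditions needed to compose $\mu$ with $\alpha$ and with $\theta_{q,p}(\alpha)$, and to invoke the factorisation cancellation, are in place.
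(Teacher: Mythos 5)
Your proof is correct and rests on the same key idea as the paper's, namely the compatibility of $\theta$ with left and right composition. The only organisational difference is that the paper verifies both inclusions by computing directly with infinite paths via the defining property of $\theta_{m,n}$ (for $\subseteq$ it shows $\alpha x = \sigma^n(\mu\beta x) = \sigma^m(\mu\beta x) = \beta x$ and appeals to uniqueness in Lemma~\ref{lem:per->bijection}(\ref{it:theta exists})), whereas you invoke Proposition~\ref{prp:per group}(\ref{it:theta ext}) twice to get $\theta_{m,n}(\mu)\alpha = \mu\,\theta_{q,p}(\alpha)$ for the containment $\supseteq$ and then deduce $\subseteq$ from that identity by left cancellation; both routes are sound, and your index bookkeeping ($n+q=m+p=m\vee n$ and $q-p=m-n\in\Per\Lambda$) checks out.
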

\begin{proof}
For the containment $\subseteq$, suppose that $(\alpha,\beta) \in
\Lambda^{\min}(\theta_{m,n}(\mu), \mu)$. Then $d(\alpha) = q$ and $d(\beta) = p$ by
definition, and Lemma~\ref{lem:per->bijection}(\ref{it:theta exists}) gives $r(\alpha) =
s(\theta_{m,n}(\mu)) = s(\mu)$. For $x \in Z(s(\alpha))$ we have
\[
\alpha x = \sigma^n(\theta_{m,n}(\mu)\alpha x)
    = \sigma^n(\mu\beta x)
    = \sigma^m(\mu\beta x)=\beta x
\]
because $m - n \in \Per\Lambda$. Thus $\beta = \theta_{q,p}(\alpha)$.

For the containment $\supseteq$, fix $\alpha \in s(\mu)\Lambda^q$. Let $x \in
Z(s(\alpha))$. We have
\[
\theta_{m,n}(\mu)\alpha x
    = \mu\alpha x
    = \mu\theta_{q,p}(\alpha) x.
\]
The factorisation property implies that $\theta_{m,n}(\mu)\alpha =
\mu\theta_{q,p}(\alpha)$. Since $n+q=m+p=m\vee n$ we have $(\alpha, \theta_{q,p}(\alpha))
\in \Lambda^{\min}(\theta_{m,n}(\mu), \mu)$.
\end{proof}

\begin{prop}\label{prp:Per<->per}
Suppose that $\Lambda$ is a strongly connected finite $k$-graph. Then $\Lambda$ is
aperiodic if and only if $\Per\Lambda = \{0\}$.
\end{prop}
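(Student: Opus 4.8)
The plan is to prove both implications via their contrapositives, using the characterisation of aperiodicity from \cite[Lemma~3.2]{RobertsonSims:BLMS07} recalled just above the statement, namely that $\Lambda$ fails to be aperiodic precisely when there exist $v \in \Lambda^0$ and $m \neq n$ in $\NN^k$ with $\sigma^m(x) = \sigma^n(x)$ for all $x \in Z(v)$. The key observation is that this ``local'' periodicity at a single vertex and the ``global'' periodicity over all of $\Lambda^\infty$ encoded in $\Per\Lambda$ are equivalent for strongly connected $\Lambda$, and the bridge between them is exactly Lemma~\ref{lem:per->bijection}(\ref{it:lp->Per}).

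First I would treat the direction that $\Per\Lambda \neq \{0\}$ implies $\Lambda$ is not aperiodic. Given a nonzero element of $\Per\Lambda$, I write it as $m - n$ with $m,n \in \NN^k$; since it is nonzero we have $m \neq n$, and by the definition in Proposition~\ref{prp:per group}(\ref{it:group}), $\sigma^m(x) = \sigma^n(x)$ for every $x \in \Lambda^\infty$. Restricting to any single set $Z(v) \subseteq \Lambda^\infty$ then produces $v$ and $m \neq n$ witnessing the failure of the Robertson--Sims condition, so $\Lambda$ is not aperiodic.

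Conversely, I would show that $\Lambda$ not aperiodic implies $\Per\Lambda \neq \{0\}$. If $\Lambda$ is not aperiodic, the Robertson--Sims characterisation furnishes $v \in \Lambda^0$ and $m \neq n$ in $\NN^k$ with $\sigma^m(x) = \sigma^n(x)$ for all $x \in Z(v)$. Now Lemma~\ref{lem:per->bijection}(\ref{it:lp->Per}) upgrades this to $\sigma^m(x) = \sigma^n(x)$ for all $x \in \Lambda^\infty$, whence $m - n \in \Per\Lambda$; since $m \neq n$ this element is nonzero, so $\Per\Lambda \neq \{0\}$. I do not expect a serious obstacle here: the only point requiring care is the translation between the single-vertex formulation of aperiodicity and the all-of-$\Lambda^\infty$ formulation of $\Per\Lambda$, and strong connectedness makes these coincide precisely through part~(\ref{it:lp->Per}) of the lemma, which does all the real work.
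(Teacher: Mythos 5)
Your proof is correct and follows essentially the same route as the paper: both directions hinge on the Robertson--Sims characterisation of aperiodicity together with Lemma~\ref{lem:per->bijection}(\ref{it:lp->Per}) to pass from periodicity on a single $Z(v)$ to periodicity on all of $\Lambda^\infty$. The only cosmetic difference is that you phrase the forward implication contrapositively and take the defining representative $m-n$ of an element of $\Per\Lambda$ directly, where the paper instead invokes Proposition~\ref{prp:per group}(\ref{it:independent}); both are fine.
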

\begin{proof}
First suppose that $\Lambda$ is aperiodic, and take $m - n \in \Per\Lambda$.
Proposition~\ref{prp:per group}(\ref{it:independent}) implies that $\sigma^m(x) =
\sigma^n(x)$ for all $x \in \Lambda^\infty$. Since $\Lambda$ is aperiodic, this forces $m
= n$. Hence $\Per\Lambda = \{0\}$.

Now suppose that $\Lambda$ is not aperiodic. The equivalence of (i)~and~(iii) in
\cite[Lemma~3.2]{RobertsonSims:BLMS07} implies that there exist $m \not= n \in \NN^k$ and
$v \in \Lambda^0$ such that $\sigma^m(x) = \sigma^n(x)$ for all $x \in Z(v)$.
Lemma~\ref{lem:per->bijection}(\ref{it:lp->Per}) then implies that $\sigma^m(x) =
\sigma^n(x)$ for all $x \in \Lambda^\infty$, and hence $m - n \in \Per\Lambda \setminus
\{0\}$.
\end{proof}

\begin{ex}
Suppose that $\Lambda$ is a finite $2$-graph with one vertex. This puts us in the
situation studied by Davidson and Yang in \cite{DY}. The group $\Per\Lambda$ is then the
intersection, over all infinite paths $x$ in $\Lambda$, of the associated symmetry groups
$H_x$ discussed in \cite[Section~2]{DY}. Proposition~\ref{prp:per
group}(\ref{it:independent}) boils down to equivalence of (i)~and~(ii) in
\cite[Theorem~3.1]{DY}. The bijections $\theta_{m,n}$ of
Proposition~\ref{lem:per->bijection}(\ref{it:theta prp}) are the bijections $\gamma$ of
\cite[Theorem~3.1(iii)]{DY}.
\end{ex}

\section{A central representation of the periodicity group}\label{sec:central rep}

We now describe how the group $\Per\Lambda$ shows up in $C^*(\Lambda)$.

\begin{prop}\label{prp:unitary group}
Let $\Lambda$ be a strongly connected finite $k$-graph, and for $m,n \in \NN^k$ such that
$m-n \in \Per\Lambda$, let $\theta_{m,n}$ be the bijection of
Lemma~\ref{lem:per->bijection}. There is a unitary representation $U$ of $\Per\Lambda$ in
the centre of $C^*(\Lambda)$ such that $U_{m - n} = \sum_{\mu \in \Lambda^m} s_{\mu}
s^*_{\theta_{m,n}(\mu)}$ whenever $m - n \in \Per\Lambda$.
\end{prop}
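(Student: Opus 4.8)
The plan is to define, for each $g \in \Per\Lambda$, a candidate unitary $U_g$ by the prescribed formula and then verify the three things that are genuinely at stake: that $U_g$ is well-defined (independent of the representation $g = m-n$), that each $U_g$ is a unitary in $C^*(\Lambda)$, and that $g \mapsto U_g$ is a group homomorphism taking values in the centre. First I would fix $g = m-n \in \Per\Lambda$ and set $U_g := \sum_{\mu \in \Lambda^m} s_\mu s^*_{\theta_{m,n}(\mu)}$. To see this is unitary, I would compute $U_g U_g^*$ and $U_g^* U_g$ using the Cuntz--Krieger relations. Since $\theta_{m,n}$ is source-preserving (Lemma~\ref{lem:per->bijection}(\ref{it:theta exists})), we have $s^*_{\theta_{m,n}(\mu)} s_{\theta_{m,n}(\nu)} = \delta_{\mu,\nu} p_{s(\mu)}$, which collapses $U_g^* U_g$ to $\sum_{\mu} s_\mu p_{s(\mu)} s^*_\mu = \sum_\mu s_\mu s^*_\mu = \sum_{v} p_v = 1$ by (CK4) applied with $n=m$; since $\theta_{m,n}$ is a bijection onto $\Lambda^n$ (part~(\ref{it:theta prp})), the symmetric computation gives $U_g U_g^* = 1$.

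Next I would address well-definedness. Suppose $m - n = m' - n' = g$. Using Proposition~\ref{prp:per group}(\ref{it:theta ext}), which describes how $\theta$ interacts with pre- and post-composition, the natural move is to compare the two sums by factoring paths in $\Lambda^{m'}$ through $\Lambda^m$ (or through a common upper bound $m \vee m'$). Concretely, after passing to $m \vee m'$, every $\mu' \in \Lambda^{m'}$ can be written compatibly with a path in $\Lambda^m$, and the compatibility formula $\theta_{d(\alpha)+m, d(\alpha)+n}(\alpha\mu) = \alpha\,\theta_{m,n}(\mu)$ lets me rewrite one defining sum as the other. I expect this reduction to be the technical heart of the argument; the cleanest route is probably to prove directly that $U_g$ computed at level $m$ equals $U_g$ computed at level $m + \ell$ for any $\ell \in \NN^k$, by splitting each $s_\mu = \sum_{\lambda \in s(\mu)\Lambda^\ell} s_{\mu\lambda} s^*_\lambda$ via (CK4) and invoking part~(\ref{it:theta ext}); two different representatives of $g$ then both agree with the computation at a common refinement.

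For the homomorphism property, I would take $g = m-n$ and $h = p-q$ in $\Per\Lambda$ and compute $U_g U_h$. After lifting both to a common level so the degrees match up (using the level-shift invariance just established, I can assume $n = p$), the product $\sum_{\mu} s_\mu s^*_{\theta_{m,n}(\mu)} \sum_\nu s_\nu s^*_{\theta_{p,q}(\nu)}$ reduces, via $s^*_{\theta_{m,n}(\mu)} s_\nu = \delta_{\theta_{m,n}(\mu),\nu} p_{s(\mu)}$, to $\sum_\mu s_\mu s^*_{\theta_{p,q}(\theta_{m,n}(\mu))}$, and the cocycle identity $\theta_{n,p} \circ \theta_{m,n} = \theta_{m,p}$ from Lemma~\ref{lem:per->bijection}(\ref{it:theta composition}) identifies this with $U_{g+h}$. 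Centrality is the remaining point: it suffices to check that $U_g$ commutes with each generator $s_\lambda$, equivalently with each $p_v$ and each $s_\lambda$. Since $\theta_{m,n}$ is range-preserving, $U_g$ commutes with the $p_v$ immediately; for $s_\lambda$ I would use Corollary~\ref{cor:lmin<->theta}, which computes $\Lambda^{\min}(\theta_{m,n}(\mu),\mu)$ in terms of $\theta$, to show $s^*_{\theta_{m,n}(\mu)} s_\lambda$ and $s_\mu s^*_\lambda$ reorganise symmetrically; the main obstacle I anticipate is bookkeeping the degrees here, which I would handle by first moving to a level $m$ large enough that $d(\lambda) \le m$ and then applying part~(\ref{it:theta ext}) to commute $\theta$ past $\lambda$. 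Once $U_g$ commutes with every generator it lies in the centre, completing the proof.
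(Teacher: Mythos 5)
Your proposal is correct and follows essentially the same route as the paper: unitarity from the source-preserving bijection property of $\theta_{m,n}$, well-definedness by refining to a common level via (CK4) and Proposition~\ref{prp:per group}(\ref{it:theta ext}), the homomorphism property from the cocycle identity $\theta_{n,p}\circ\theta_{m,n}=\theta_{m,p}$, and centrality by raising the level past $d(\lambda)$ and commuting $\theta$ past $\lambda$ (the paper does not actually need Corollary~\ref{cor:lmin<->theta} for this last step). The only micro-step to add is that commuting with every generator $s_\lambda$ gives commuting with every $s_\lambda^*$ because $U_g^*=U_{-g}$ is itself one of the commuting unitaries, so $U_g s_\lambda^* = (s_\lambda U_{-g})^* = (U_{-g}s_\lambda)^* = s_\lambda^* U_g$.
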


\begin{lem}\label{lem:unitary}
Let $\Lambda$ be a strongly connected finite $k$-graph. Suppose that $m - n \in
\Per\Lambda$. Then $s_\mu s^*_\mu = s_{\theta_{m,n}(\mu)} s^*_{\theta_{m,n}(\mu)}$ for
all $\mu \in \Lambda^m$. The element $U := \sum_{\mu \in \Lambda^m} s_{\mu}
s^*_{\theta_{m,n}(\mu)}$ is a unitary in $C^*(\Lambda)$.
\end{lem}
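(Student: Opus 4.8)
The plan is to prove the two assertions in turn, treating the projection identity first because it clarifies the structure of the summands of $U$, and then establishing unitarity by a direct orthogonality computation.

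For the first assertion, write $\mu' := \theta_{m,n}(\mu)$, so $d(\mu) = m$, $d(\mu') = n$, and $s(\mu') = s(\mu)$ by Lemma~\ref{lem:per->bijection}(\ref{it:theta exists}). Set $p := (m\vee n) - m$ and $q := (m \vee n) - n$, and note $q - p = m - n \in \Per\Lambda$, so $\theta_{q,p}$ is defined. I would expand both range projections using (CK4): applying it at $s(\mu)$ with exponent $p$ gives $s_\mu s^*_\mu = s_\mu p_{s(\mu)} s^*_\mu = \sum_{\beta \in s(\mu)\Lambda^p} s_{\mu\beta} s^*_{\mu\beta}$, and applying it at $s(\mu') = s(\mu)$ with exponent $q$ gives $s_{\mu'} s^*_{\mu'} = \sum_{\alpha \in s(\mu)\Lambda^q} s_{\mu'\alpha} s^*_{\mu'\alpha}$. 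Corollary~\ref{cor:lmin<->theta} supplies the factorisation $\mu'\alpha = \mu\theta_{q,p}(\alpha)$ for each $\alpha \in s(\mu)\Lambda^q$, and since $\theta_{q,p}$ is a range- and source-preserving bijection by Lemma~\ref{lem:per->bijection}(\ref{it:theta exists})~and~(\ref{it:theta prp}), it restricts to a bijection of $s(\mu)\Lambda^q$ onto $s(\mu)\Lambda^p$. Reindexing the second sum by $\beta = \theta_{q,p}(\alpha)$ therefore turns it into the first, yielding $s_\mu s^*_\mu = s_{\mu'} s^*_{\mu'}$.

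For unitarity, observe first that $C^*(\Lambda)$ is unital with $1 = \sum_{v \in \Lambda^0} p_v$ because $\Lambda$ is finite. I would then compute $U^*U$ and $UU^*$ using the relation $s^*_\eta s_\zeta = \delta_{\eta,\zeta} p_{s(\eta)}$ valid whenever $d(\eta) = d(\zeta)$ (recorded in the background). For $U^*U = \sum_{\mu,\nu \in \Lambda^m} s_{\theta_{m,n}(\mu)} s^*_\mu s_\nu s^*_{\theta_{m,n}(\nu)}$, the middle factor $s^*_\mu s_\nu$ collapses to $\delta_{\mu,\nu} p_{s(\mu)}$, leaving $\sum_{\mu} s_{\theta_{m,n}(\mu)} s^*_{\theta_{m,n}(\mu)}$; since $\theta_{m,n}$ is a bijection of $\Lambda^m$ onto $\Lambda^n$, this equals $\sum_{\nu \in \Lambda^n} s_\nu s^*_\nu = \sum_{v \in \Lambda^0} p_v = 1$ by (CK4) at exponent $n$. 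For $UU^* = \sum_{\mu,\nu \in \Lambda^m} s_\mu s^*_{\theta_{m,n}(\mu)} s_{\theta_{m,n}(\nu)} s^*_\nu$, the injectivity of $\theta_{m,n}$ together with the same orthogonality relation in degree $n$ forces $\mu = \nu$, and using $s(\theta_{m,n}(\mu)) = s(\mu)$ this collapses to $\sum_\mu s_\mu s^*_\mu = 1$ by (CK4) at exponent $m$.

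The computations are largely mechanical once the right tools are in hand; the step requiring genuine care is the first assertion, where the essential input is Corollary~\ref{cor:lmin<->theta}. The subtlety is choosing the exponents $p$ and $q$ so that $q - p = m - n$ lies in $\Per\Lambda$, and then correctly recognising $\theta_{q,p}$ as a range-preserving bijection $s(\mu)\Lambda^q \to s(\mu)\Lambda^p$, so that the reindexing of the two (CK4) expansions matches up. Everything else reduces to bookkeeping with the Cuntz-Krieger relations and the bijectivity of $\theta_{m,n}$.
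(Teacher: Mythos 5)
Your proposal is correct and follows essentially the same route as the paper's proof: both parts hinge on Corollary~\ref{cor:lmin<->theta} (giving $\theta_{m,n}(\mu)\alpha = \mu\theta_{q,p}(\alpha)$ and the bijection $\theta_{q,p} : s(\mu)\Lambda^q \to s(\mu)\Lambda^p$) for the projection identity, and on the orthogonality relation $s^*_\eta s_\zeta = \delta_{\eta,\zeta}p_{s(\eta)}$ in a fixed degree together with (CK4) and the bijectivity of $\theta_{m,n}$ for unitarity. The only cosmetic difference is that you expand both range projections and match terms, whereas the paper inserts $p_{s(\mu)}$ and reindexes in a single chain of equalities.
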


\begin{proof}
Let $p = (m \vee n) - m$, $q=(m\vee n)-n$ and  $\mu \in \Lambda^m$. By
Corollary~\ref{cor:lmin<->theta},
\[
\Lambda^{\min}(\mu,\theta_{m,n}(\mu))=\{(\theta_{q,p}(\alpha),\alpha):\alpha\in s(\mu)\Lambda^q\},
\]
and in particular, $\mu\theta_{q,p}(\alpha) = \theta_{m,n}(\mu)\alpha$ for all $\alpha
\in s(\mu)\Lambda^q$. Using this at the fourth equality, we compute:
\begin{align*}
s_\mu s^*_\mu
    &= s_\mu \Big(\sum_{\beta \in s(\mu)\Lambda^p} s_\beta s^*_\beta\Big) s^*_\mu
    = s_\mu\Big(\sum_{\alpha\in s(\mu)\Lambda^q} s_{\theta_{q,p}(\alpha)}s_{\theta_{q,p}(\alpha)}^* \Big)s_\mu^*\\
    &= \sum_{\alpha\in s(\mu)\Lambda^q} s_{\mu \theta_{q,p}(\alpha)} s_{\mu\theta_{q,p}(\alpha)}^*
    = \sum_{\alpha\in s(\mu)\Lambda^q} s_{\theta_{m,n}(\mu)\alpha} s_{\theta_{m,n}(\mu)\alpha}^*\\
    &= s_{\theta_{m,n}(\mu)}\Big(\sum_{\alpha\in s(\mu)\Lambda^q} s_{\alpha} s_{\alpha}^*\Big)s^*_{\theta_{m,n}(\mu)}
    = s_{\theta_{m,n}(\mu)} s^*_{\theta_{m,n}(\mu)}.
\end{align*}
Since $\theta_{m,n}$ is a source-preserving bijection we have
\[
UU^* = \sum_{\mu,\eta \in \Lambda^m} s_\mu s^*_{\theta_{m,n}(\mu)} s_{\theta_{m,n}(\eta)} s^*_\eta
     = \sum_{\mu \in \Lambda^m} s_\mu s^*_{\theta_{m,n}(\mu)} s_{\theta_{m,n}(\mu)} s_\mu^*
     =\sum_{v\in \Lambda^0}\sum_{\mu\in v\Lambda^m}s_\mu s^*_\mu
     = 1_{C^*(\Lambda)}.
\]
The symmetric calculation gives $UU^* = 1$. Thus $U$ is unitary.
\end{proof}

\begin{proof}[Proof of Proposition~\ref{prp:unitary group}]
We start by showing that $\sum_{\mu \in \Lambda^m} s_{\mu} s^*_{\theta_{m,n}(\mu)}$
depends only on $m-n$. Suppose that $m,n,p,q\in\NN^k$ and $m - n = p - q\in \Per\Lambda$.
Then~(CK4) followed by Proposition~\ref{prp:per group}(\ref{it:theta ext}) imply that
\[
\sum_{\mu \in \Lambda^m} s_{\mu} s^*_{\theta_{m,n}(\mu)}
    = \sum_{\mu \in \Lambda^m}\sum_{\alpha \in s(\mu)\Lambda^p}s_{\mu} s_\alpha s^*_\alpha s^*_{\theta_{m,n}(\mu)}
    = \sum_{\eta \in \Lambda^{m + p}}s_{\eta} s^*_{\theta_{m + p, n + p}(\eta)}.
\]
The same calculation with $(m,n,p)$ replaced by $(p,q,m)$ gives
\[
\sum_{\nu \in \Lambda^p} s_{\nu} s^*_{\theta_{p,q}(\nu)}
    = \sum_{\zeta \in \Lambda^{p + m}} s_{\zeta} s^*_{\theta_{p + m, q + m}(\zeta)}.
\]
Since $n + p = q + m$, the formula for $U_{m-n}$ is well defined.

Lemma~\ref{lem:unitary} implies that $U_{m-n}$ is unitary.  By
Lemma~\ref{lem:per->bijection}(\ref{it:theta exists}), $\theta_{n,m} =\theta_{m,n}^{-1}$,
and hence $U_{m - n} = U^*_{n - m}$. To see that $g \mapsto U_g$ is a homomorphism, fix
$g,h \in \Per\Lambda$. To line things up, choose $g_{+}, g_{-}, h_{+}, h_{-}\in\NN^k$
such that $g = g_+ - g_-$ and $h = h_+ -h_-$. Let $m := g_+ + h_+$, $n := g_- + h_+$ and
$p := g_- + h_-$. Then $g = m - n$ and $h = n - p$. For $\mu \in \Lambda^m$ and $\nu \in
\Lambda^n$, we have $s^*_{\theta_{m,n}(\mu)} s_\nu = \delta_{\theta_{m,n}(\mu), \nu}
p_{s(\mu)}$ by (CK3)~and~(CK4). This and Lemma~\ref{lem:per->bijection}(\ref{it:theta
composition}) give
\[
U_g U_h
    = \sum_{\mu \in \Lambda^m}\sum_{\nu \in \Lambda^n}
            s_\mu s^*_{\theta_{m,n}(\mu)} s_{\nu} s^*_{\theta_{n,p}(\nu)}
    =\sum_{\mu \in \Lambda^m} s_\mu s^*_{\theta_{n,p}(\theta_{m,n}(\mu))}
    =\sum_{\mu \in \Lambda^m} s_\mu s^*_{\theta_{m,p}(\mu)}.
\]
Since $m - p = m - n + n - p = g + h$, we deduce that $U_g U_h = U_{g + h}$.

To see that the $U_g$ are central, it suffices to show that $U_g s_\lambda = s_\lambda
U_g$ for all $g\in \Per\Lambda$ and $\lambda\in\Lambda$: since $\Per\Lambda$ is a group
and $U_{-g} = U_g^*$ we then have $U_g s^*_\lambda = (s_\lambda U_{-g})^* = (U_{-g}
s_\lambda)^* = s^*_\lambda U_g$. Fix $\lambda \in \Lambda$ and $g \in \Per\Lambda$.
Choose $m,n \in \NN^k$ such that $g = m-n$, and let $p := m + d(\lambda)$ and $q := n +
d(\lambda)$. By factoring $\xi\in\Lambda^p$ into paths of degree $d(\lambda)$ and $m$,
\[
U_g s_\lambda
    = \sum_{\xi \in \Lambda^p} s_\xi s^*_{\theta_{p,q}(\xi)} s_\lambda
    = \sum_{\eta \in \Lambda^{d(\lambda)}} \sum_{\mu \in s(\eta)\Lambda^m} s_{\eta\mu} s^*_{\theta_{p,q}(\eta\mu)} s_\lambda.
\]
By Proposition~\ref{prp:per group}(\ref{it:theta ext}),  each $\theta_{p,q}(\eta\mu) =
\eta \theta_{m,n}(\mu)$. Since  $s^*_{\eta \theta_{m,n}(\mu)} s_\lambda =
\delta_{\eta,\lambda} s^*_{\theta_{m,n}(\mu)}$ we deduce that
\[
U_g s_\lambda = \sum_{\mu \in
s(\lambda)\Lambda^m} s_{\lambda\mu} s^*_{\theta_{m,n}(\mu)}
    = \sum_{\mu \in \Lambda^m} s_{\lambda} s_{\mu} s^*_{\theta_{m,n}(\mu)}
    = s_\lambda U_g.\qedhere
\]
\end{proof}

\section{The statement of the main result}\label{sec:main}

Suppose that $\Lambda$ is a strongly connected finite $k$-graph. Our main theorem,
Theorem~\ref{thm:main} below, describes the KMS$_1$ states of $C^*(\Lambda)$ for the
\emph{preferred dynamics} defined by
\[
\alpha_t = \gamma_{\rho(\Lambda)^{it}}\quad \text{ for all $t \in \RR$}
\]
corresponding to $r=\ln\rho(\Lambda)$.

To see why we chose this dynamics and inverse temperature, take $r \in \RR^k$ and $\beta
\in [0,\infty)$ and let $\alpha^r$ be the dynamics $\alpha^r_t = \gamma_{e^{itr}}$.
Suppose that $\phi$ is a KMS$_\beta$ state for $(C^*(\Lambda), \alpha^r)$. Then
Corollary~\ref{cor:aHLRS2-Cor4.3,4.4}(\ref{it:KMSCK}) implies that $\beta r =
\ln\rho(\Lambda)$. So $\alpha_t = \alpha^r_{\beta t}$ for all $t$, and hence the
KMS$_\beta$ condition for $\alpha^r$ is the KMS$_1$ condition for $\alpha$. So $\phi$ is
a KMS$_1$ state for $(C^*(\Lambda), \alpha)$.

There is a slight subtlety here when $\rho(\Lambda) = (1, \dots, 1)$. The preferred
dynamics is then the trivial action, and so the KMS$_1$ states described in
Theorem~\ref{thm:main} are traces, and are KMS$_\beta$ states for all other values of
$\beta$. If at least one $\rho(\Lambda)_i$ is different from 1, then
Corollary~\ref{cor:aHLRS2-Cor4.3,4.4}(\ref{it:KMSCK}) shows that $(C^*(\Lambda), \alpha)$
admits KMS$_\beta$ states only for $\beta = 1$.

\begin{thm}\label{thm:main}
Suppose that $\Lambda$ is a strongly connected finite $k$-graph. Let $\alpha$ be the
preferred dynamics on $C^*(\Lambda)$. Let $\pi_U : C^*(\Per\Lambda) \to C^*(\Lambda)$ be
the homomorphism of Proposition~\ref{prp:unitary group}. Then $\pi_U^* : \phi \mapsto
\phi \circ \pi_U$ is an affine isomorphism of the KMS$_1$ simplex of $(C^*(\Lambda),
\alpha)$ onto the state space of $C^*(\Per\Lambda)$.
\end{thm}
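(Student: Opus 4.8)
The plan is to establish that $\pi_U^*$ is a well-defined affine injection into the state space, and then to prove surjectivity, which I expect to be the genuinely hard part. For the first half, I would proceed as follows. Since $C^*(\Per\Lambda)$ is the group $C^*$-algebra of an abelian group, its states are determined by their values on the unitaries $\{u_g : g \in \Per\Lambda\}$, and these values are the Fourier coefficients of a probability measure on the dual group $(\Per\Lambda)\widehat{\;}$. Given a KMS$_1$ state $\phi$ of $(C^*(\Lambda),\alpha)$, the composite $\phi \circ \pi_U$ is manifestly a state of $C^*(\Per\Lambda)$, and $\pi_U^*$ is plainly affine and weak*-continuous, so it maps the KMS$_1$ simplex into the state space. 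The substantive content is that $\pi_U^*$ is \emph{injective}: two KMS states agreeing on the central unitaries $U_g$ must be equal. For this I would invoke the structural result flagged in the introduction (Theorem~\ref{thm:CK states}), which expresses an arbitrary KMS state $\phi$ in terms of the single universal measure $M$ on $\Lambda^\infty$ together with the data $\phi \circ \pi_U$. Concretely, all KMS$_1$ states induce the same measure $M$ on the diagonal spectrum $\Lambda^\infty$, so a KMS state is pinned down by its values $\phi(s_\mu s_\nu^*)$; these in turn are governed by $M$ and by the numbers $\phi(U_g)$. Thus if $\phi_1\circ\pi_U = \phi_2\circ\pi_U$ then $\phi_1(U_g)=\phi_2(U_g)$ for all $g$, and the formula from Theorem~\ref{thm:CK states} forces $\phi_1 = \phi_2$.

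For surjectivity, the strategy outlined in the introduction is to hit every \emph{pure} state of $C^*(\Per\Lambda)$ and then extend to the whole state space by affinity and weak*-density. Because $\Per\Lambda$ is abelian, the pure states of $C^*(\Per\Lambda)$ are exactly the point-evaluations, i.e.\ the characters $z \in (\Per\Lambda)\widehat{\;}$, which on the canonical unitaries satisfy $u_g \mapsto z(g)$. So it suffices to realise each such character as $\phi_z \circ \pi_U$ for some KMS$_1$ state $\phi_z$. I would build a distinguished KMS$_1$ state $\phi_1$ by integrating vector states against the measure $M$ (as promised in Section~\ref{sec:surjectivity}), check that $\phi_1$ genuinely satisfies the KMS$_1$ condition for the preferred dynamics $\alpha$, and then produce the family $\{\phi_z\}$ by composing $\phi_1$ with gauge automorphisms, as in Corollary~\ref{cor:many KMS}. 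Composing with $\gamma_z$ does not disturb the KMS condition because the gauge action commutes with $\alpha = \gamma_{\rho(\Lambda)^{it}}$ (both factor through the abelian group $\TT^k$), so each $\phi_z$ is again a KMS$_1$ state. The remaining point is to compute $\phi_z(U_g)$ and confirm that, as $z$ ranges over the relevant parameters, $g \mapsto \phi_z(U_g)$ ranges over all characters of $\Per\Lambda$; since $U_g$ has gauge-degree determined by $g$ (a lift of $g \in \Per\Lambda \subseteq \ZZ^k$), applying $\gamma_z$ multiplies $\phi_1(U_g)$ by the corresponding character value, and a suitable choice of $z$ yields an arbitrary character.

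The main obstacle, and the step that will require the most care, is the construction and verification of the base KMS state $\phi_1$ as an integral of vector states against $M$, together with the precise computation of $\phi_1(U_g)$. Two features make this delicate. First, as the authors emphasise, unlike the $k=1$ case this state need not be supported on the fixed-point algebra of the gauge action, so one cannot simply appeal to a conditional-expectation argument onto the gauge-invariant subalgebra; the vector states in the integrand genuinely see the off-diagonal elements $s_\mu s_\nu^*$ with $d(\mu) \neq d(\nu)$, and these are exactly the terms that detect periodicity. Second, verifying the KMS$_1$ condition for $\phi_1$ amounts to checking the commutation relation $\phi_1(ab) = \phi_1(b\,\alpha_i(a))$ on the spanning elements $s_\mu s_\nu^*$, which reduces via the Perron-Frobenius data of Corollary~\ref{cor-PFgraph} (the eigenvector $x^\Lambda$ and eigenvalues $\rho(\Lambda)^n$) to a combinatorial identity about the measure $M$ and the bijections $\theta_{m,n}$. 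Once $\phi_1$ is in hand and $\phi_1(U_g)$ is computed, the surjectivity conclusion follows cleanly, and assembling injectivity and surjectivity gives that $\pi_U^*$ is the desired affine isomorphism; that the domain is actually a \emph{simplex} follows because it is affinely isomorphic to the state space of the commutative (hence the simplex is a Bauer or Choquet simplex) $C^*$-algebra $C^*(\Per\Lambda)$.
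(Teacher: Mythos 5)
Your proposal follows essentially the same route as the paper: injectivity via the formula of Theorem~\ref{thm:CK states} expressing a KMS state in terms of $M$ and $\phi\circ\pi_U$, and surjectivity by realising each character of $\Per\Lambda$ as $\phi_z\circ\pi_U$ for the gauge-perturbed integral states of Corollary~\ref{cor:many KMS}, then passing to general states by convexity and a Krein--Milman argument (the paper makes explicit the one point you gloss: the range of $\pi_U^*$ is closed because the KMS$_1$ simplex is compact and the state space of $C^*(\Per\Lambda)$ is Hausdorff). The approach and the identification of where the real work lies (constructing $\phi_1$ and verifying the KMS condition directly, since the state is not supported on the diagonal) are both accurate.
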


The proof of Theorem~\ref{thm:main} occupies the next three sections. The proof strategy
is as follows. In Section~\ref{sec:measures}, we show that the KMS states of
$(C^*(\Lambda), \alpha)$ all induce the same measure $M$ on the spectrum of the abelian
subalgebra $\clsp\{s_\lambda s^*_\lambda\} \subseteq C^*(\Lambda)$, and we characterise
$M$ in terms of the unimodular Perron-Frobenius eigenvector $x^\Lambda$. We use $M$ in
Section~\ref{sec:injectivity} to establish a formula for a KMS state $\phi$ in terms of
$\phi \circ \pi_U$ (see Theorem~\ref{thm:CK states}). In Section~\ref{sec:surjectivity}
we use $M$ again to construct a particular KMS state in Proposition~\ref{prp:integral
KMS}. This state is not always supported on $\clsp\{s_\lambda s^*_\lambda\}$, so
composing with the gauge automorphisms yields more KMS$_1$ states
(Corollary~\ref{cor:many KMS}). We can then prove Theorem~\ref{thm:main}: we deduce from
the formula for KMS states established in Theorem~\ref{thm:CK states} that $\pi^*_U$ is a
continuous affine injection; we then use Corollary~\ref{cor:many KMS} to see that each
pure state of $C^*(\Per\Lambda)$ is in the image of $\pi^*_U$, and deduce that $\pi^*_U$
is surjective.

Before moving on to the first part of the proof of Theorem~\ref{thm:main}, a reality
check is in order. If $\Lambda$ is coordinatewise irreducible, in the sense that each
$A_i$ is an irreducible matrix, then it is also strongly connected. So both
Theorem~\ref{thm:main} and \cite[Theorem~7.2]{aHLRSk-graphs} apply. The next remark
reconciles the hypotheses of the two results.

\begin{rmk}
Suppose that $\Lambda$  is coordinatewise-irreducible. Theorem~\ref{thm:main} says that
if $\Per\Lambda$ is nontrivial, then $(C^*(\Lambda), \alpha)$ has many KMS states.
Theorem~7.2 of \cite{aHLRSk-graphs}, on the other hand, says that if the coordinates of
the vector $\ln\rho(\Lambda)$ are rationally independent, then $(C^*(\Lambda), \alpha)$
admits a unique KMS state. To reconcile the two results, we will show that if
$\Per\Lambda$ is nontrivial, then the coordinates of $\ln\rho(\Lambda)$ are rationally
dependent.

Let  $m - n \in \Per\Lambda \setminus \{0\}$. By
Lemma~\ref{lem:per->bijection}(\ref{it:theta prp}), there is a source- and
range-preserving bijection of $\Lambda^m$ onto $\Lambda^n$.  For $v, w\in\Lambda^0$, we
have $A^m(v,w) = |v\Lambda^m w| = |v\Lambda^n w| = A^n(v,w)$, and so $A^m=A^n$. Since
$\Lambda$ is strongly connected, Corollary~\ref{cor-PFgraph}(\ref{cor-PFgraph0}) shows
that
\begin{equation}\label{eq:rationally dependent}
\rho(A)^m
    = \prod_i\rho(A_i)^{m_i}
    = \rho(A^m)
    = \rho(A^n)
    = \prod_i\rho(A_i)^{n_i}
    = \rho(A)^n.
\end{equation}
Taking logarithms,
\[
m \cdot \ln\rho(\Lambda)=\sum_{i=1}^k m_i\ln\rho(A_i)
    = \ln\Big(\prod_{i=1}^k \rho(A_i)^{m_i}\Big)
    = \ln\Big(\prod_{i=1}^k \rho(A_i)^{n_i}\Big)
    = n \cdot \ln\rho(\Lambda).
\]
Thus the coordinates of $\ln\rho(\Lambda)$ are rationally dependent.
\end{rmk}

\section{Measures on the infinite-path space}\label{sec:measures}

Let $\Lambda$ be a strongly connected finite $k$-graph. Each KMS state of $C^*(\Lambda)$
restricts to a state of the commutative subalgebra $\clsp\{s_\lambda s^*_\lambda :
\lambda \in \Lambda\}$, and hence to a probability measure on its spectrum satisfying an
invariance condition (see~\eqref{eq:measure cond}). In this section we use our
Perron-Frobenius theorem and results of Choksi \cite{Choksi:PLMS58} about measures on
inverse-limit spaces to show that there is a unique measure satisfying~\eqref{eq:measure
cond}. We will use this result in Section~\ref{sec:injectivity} to give a formula for a
KMS state $\phi$ in terms of its restriction to the image of $C^*(\Per\Lambda)$, and
again in Section~\ref{sec:surjectivity} to construct KMS states.

Recall from \cite{KP} that the sets $Z(\lambda) = \{x \in \Lambda^\infty :
x(0,d(\lambda)) = \lambda\}$ indexed by $\lambda \in \Lambda$ constitute a basis of
compact open sets for a compact Hausdorff topology on $\Lambda^\infty$. Equip the finite
sets $\Lambda^m$ with the discrete topology. Let $m \le n\in\NN^k$ and define $\pi_{m,n}
: \Lambda^n \to \Lambda^m$ by $\pi_{m,n}(\lambda)=\lambda(0,m)$. Then $(\Lambda^m,
\pi_{m,n})$ is an inverse system of compact topological spaces and continuous, surjective
maps. Using the universal property of the inverse limit it is routine to show that $x
\mapsto \big(x(0,m)\big)_{m \in \NN^k}$ is a homeomorphism of $\Lambda^\infty$ onto the
inverse limit $\varprojlim(\Lambda^m, \pi_{m,n})$.

There is an isomorphism of the commutative subalgebra $\clsp\{s_\lambda s^*_\lambda :
\lambda \in \Lambda\}$ of $C^*(\Lambda)$ onto $C(\Lambda^\infty)$ that carries $s_\lambda
s^*_\lambda$ to $1_{Z(\lambda)}$ (see,  for example,  Theorem~7.1 of
\cite{Webster:SM11}). Thus the Riesz Representation Theorem associates to each state
$\phi$ of $C^*(\Lambda)$ a Borel probability measure $M$ on $\Lambda^\infty$ such that
$M(Z(\lambda)) = \phi(s_\lambda s^*_\lambda)$.

Let $\alpha$ denote the preferred dynamics on $C^*(\Lambda)$, and suppose that $\phi$ is
a KMS$_1$ state of $(C^*(\Lambda), \alpha)$. The KMS condition ensures that, for
$\lambda\in\Lambda$,
\begin{equation}
\phi(s_\lambda s^*_\lambda) = \rho(\Lambda)^{-d(\lambda)} \phi(s^*_\lambda s_\lambda)
    = \rho(\Lambda)^{-d(\lambda)} \phi(p_{s(\lambda)}),
\end{equation}
and hence the corresponding probability measure $M$ on $\Lambda^\infty$ satisfies
\begin{equation}\label{eq:measure cond}
    M(Z(\lambda)) = \rho(\Lambda)^{-d(\lambda)} M(Z(s(\lambda)))\quad\text{ for all $\lambda\in\Lambda$}.
\end{equation}
We now show that there is exactly one measure satisfying \eqref{eq:measure cond}.

\begin{prop}\label{prop:measure e-u}
Suppose that $\Lambda$ is a strongly connected finite $k$-graph. Then there exists a
unique Borel probability measure $M$ on $\Lambda^\infty$ that satisfies~\eqref{eq:measure
cond}. Let $x^\Lambda$ be the unimodular Perron-Frobenius eigenvector of $\Lambda$. We
have
\begin{equation}\label{eq:cylinder measure}
    M(Z(\lambda)) = \rho(\Lambda)^{-d(\lambda)} x^\Lambda_{s(\lambda)}\quad\text{ for all $\lambda$.}
\end{equation}
\end{prop}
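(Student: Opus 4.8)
The plan is to exploit the identification of $\Lambda^\infty$ with the inverse limit $\varprojlim(\Lambda^m,\pi_{m,n})$ recorded before the statement, and to use Choksi's theorem \cite{Choksi:PLMS58}: a Borel probability measure on this inverse limit is the same thing as a consistent family of probability measures on the finite discrete spaces $\Lambda^m$, and such a family determines the measure uniquely. Since $\{Z(\lambda):\lambda\in\Lambda^m\}$ partitions $\Lambda^\infty$, prescribing $M$ on cylinders is exactly prescribing the level-$m$ measures, with $M(Z(\lambda))$ the mass at $\lambda\in\Lambda^m$. So I would take the candidate weights $M(Z(\lambda)):=\rho(\Lambda)^{-d(\lambda)}x^\Lambda_{s(\lambda)}$ from \eqref{eq:cylinder measure} and show (i) they define a consistent family of probability measures, giving existence, and (ii) any measure satisfying \eqref{eq:measure cond} must agree with them, giving uniqueness.

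For existence, fix $m$. First I would check that the weights at level $m$ sum to $1$: grouping over sources and using $|\Lambda^m v|=\sum_w A^m(w,v)$ gives $\sum_{\lambda\in\Lambda^m}M(Z(\lambda))=\rho(\Lambda)^{-m}\sum_{w,v}A^m(w,v)x^\Lambda_v$, and the eigenvector equation $A^m x^\Lambda=\rho(\Lambda)^m x^\Lambda$ together with $\|x^\Lambda\|_1=1$ collapses this to $1$. Consistency under $\pi_{m,n}$ amounts to $\sum_{\nu\in s(\mu)\Lambda^{n-m}}M(Z(\mu\nu))=M(Z(\mu))$ for $\mu\in\Lambda^m$; expanding the left side as $\rho(\Lambda)^{-n}\sum_w A^{n-m}(s(\mu),w)x^\Lambda_w$ and applying $A^{n-m}x^\Lambda=\rho(\Lambda)^{n-m}x^\Lambda$ at the vertex $s(\mu)$ returns $\rho(\Lambda)^{-m}x^\Lambda_{s(\mu)}=M(Z(\mu))$. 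Choksi's theorem then produces a unique Borel probability measure $M$ with these cylinder values, and since $d(s(\lambda))=0$ and $s(s(\lambda))=s(\lambda)$, the formula \eqref{eq:cylinder measure} immediately yields \eqref{eq:measure cond}. Here both parts of Corollary~\ref{cor-PFgraph} (positivity of the $\rho(A_i)$ and the eigenvector equation) do the work.

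For uniqueness, suppose $M'$ is any Borel probability measure satisfying \eqref{eq:measure cond}, and set $y_v:=M'(Z(v))$ for $v\in\Lambda^0$; then $y\ge 0$ and $\|y\|_1=M'(\Lambda^\infty)=1$ because the $Z(v)$ partition $\Lambda^\infty$. Using that $Z(v)=\bigsqcup_{\lambda\in v\Lambda^{e_i}}Z(\lambda)$ and applying \eqref{eq:measure cond} to each $Z(\lambda)$, I would compute $y_v=\rho(A_i)^{-1}\sum_w A_i(v,w)y_w$, that is, $A_i y=\rho(A_i)y$ for every $i$. Thus $y$ is a non-negative unimodular common eigenvector of the coordinate matrices, so $y=x^\Lambda$ by Corollary~\ref{cor-PFgraph}(\ref{cor-PFgraph1}). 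Feeding this back through \eqref{eq:measure cond} gives $M'(Z(\lambda))=\rho(\Lambda)^{-d(\lambda)}x^\Lambda_{s(\lambda)}=M(Z(\lambda))$ for all $\lambda$, and since a measure on the inverse limit is determined by its values on all cylinders (the uniqueness half of Choksi's theorem), $M'=M$.

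I expect the main obstacle to be the careful bookkeeping in the consistency step, in particular making sure the sum over sources is matched against the correct action of $A^m$ on $x^\Lambda$ rather than its transpose, and correctly invoking Choksi's theorem so that both the existence and the uniqueness halves are available; the Perron-Frobenius input itself is entirely supplied by Corollary~\ref{cor-PFgraph}.
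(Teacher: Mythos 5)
Your proposal is correct and follows essentially the same route as the paper's proof: both construct $M$ via Choksi's inverse-limit theorem from the consistent family $M_m(\{\mu\})=\rho(\Lambda)^{-m}x^\Lambda_{s(\mu)}$ (consistency coming from $A^{n-m}x^\Lambda=\rho(\Lambda)^{n-m}x^\Lambda$), and both prove uniqueness by showing that $v\mapsto M'(Z(v))$ is a non-negative unimodular common eigenvector of the $A_i$, hence equal to $x^\Lambda$ by Corollary~\ref{cor-PFgraph}(\ref{cor-PFgraph1}). The only cosmetic difference is that the paper checks normalisation at level $0$ rather than at level $m$, and concludes $M=M'$ directly from agreement on the cylinder sets.
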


\begin{proof}
We build a measure $M$ satisfying~\eqref{eq:measure cond} and~\eqref{eq:cylinder measure}
by viewing  $\Lambda^\infty$ as the inverse limit of the sets $\Lambda^m$ under the maps
$\pi_{m,n} : \Lambda^n \to \Lambda^m$ for  $n\geq m\in\NN^k$. For $S\subseteq \Lambda^m$,
define $M_m(S) = \rho(\Lambda)^{-m}\sum_{\mu \in S}
 x^\Lambda_{s(\mu)}$. Then $M_m$ is a  measure on $\Lambda^m$.

For $m \le n$ and $\mu \in \Lambda^m$, we have
\begin{align*}
M_n(\pi_{m,n}^{-1}(\{\mu\}))
    &= \sum_{\mu' \in s(\mu)\Lambda^{n-m}} \rho(\Lambda)^{-n} x^\Lambda_{s(\mu')}
    = \rho(\Lambda)^{-n} \sum_{w \in \Lambda^0} A^{n-m}(s(\mu),w) x^\Lambda_w\\
    &= \rho(\Lambda)^{-n} (A^{n-m} x^\Lambda)_{s(\mu)}= \rho(\Lambda)^{-m} x^\Lambda_{s(\mu)}
    = M_m(\{\mu\}).
\end{align*}
Thus $M_n(\pi_{m,n}^{-1}(S)) = M_m(S)$ for all $S \subseteq \Lambda^m$ and the measure
spaces $((\Lambda^m, M_m), \pi_{m,n})$ form an inverse system. Theorem~2.2 of
\cite{Choksi:PLMS58} implies that there is a Borel measure $M$ on $\Lambda^\infty =
\varprojlim (\Lambda^m, \pi_{m,n})$ such that, for $\mu \in \Lambda^m$,
\[
M(Z(\mu))
    = M_m(\{\mu\})
    = \rho(\Lambda)^{-m} x^\Lambda_{s(\mu)}
    = \rho(\Lambda)^{-m}M(Z(s(\mu))).
\]
Since $M(\Lambda^\infty)=\sum_{v\in\Lambda^0} M(Z(v))=\sum_{v\in\Lambda^0}
x_v^\Lambda=1$, this $M$ is a probability measure satisfying~\eqref{eq:measure cond}
and~\eqref{eq:cylinder measure}.

Now suppose that $M'$ is a Borel probability measure satisfying~\eqref{eq:measure cond}.
Define a vector $y \in [0,\infty)^{\Lambda^0}$ by $y_v = M'(Z(v))$ for each $v$. For
$1\le i \le k$, we have $Z(v) = \bigsqcup_{\alpha \in v\Lambda^{e_i}} Z(\alpha)$, and
using~\eqref{eq:measure cond}, we have
\begin{align*}
\rho(A_i) y_v
    &= \rho(A_i) M'(Z(v))
    = \rho(A_i) \sum_{\alpha \in v\Lambda^{e_i}} M'(Z(\alpha)) \\
    &= \sum_{\alpha \in v\Lambda^{e_i}} M'(Z(s(\alpha)))
    = \sum_{w \in \Lambda^0} A_i(v,w) y_w
    = (A_i y)_v.
\end{align*}
So $y$ is a non-negative eigenvector of each $A_i$ with eigenvalue $\rho(A_i)$ and unit
$1$-norm. Thus $y = x^\Lambda$ by Corollary~\ref{cor-PFgraph}(\ref{cor-PFgraph1}).
Now~\eqref{eq:cylinder measure} for $M'$ follows from~\eqref{eq:measure cond}. Thus
$M=M'$.
\end{proof}

Each $\sigma^m : \Lambda^\infty \to \Lambda^\infty$ is continuous since it restricts to a
homeomorphism of $Z(\mu)$ for each $\mu \in \Lambda^m$. So each $\{x \in \Lambda^\infty :
\sigma^m(x) = \sigma^n(x)\}$ is closed and hence Borel. We next show that when $m-n \in
\Per\Lambda$, the measure $M$ is supported on $\{x \in \Lambda^\infty : \sigma^m(x) =
\sigma^n(x)\}$.

\begin{prop}\label{prp:m sees Per}
Let $\Lambda$ be a strongly connected finite $k$-graph, and let $M$ be the measure on
$\Lambda^\infty$ obtained from Proposition~\ref{prop:measure e-u}. For $m,n \in \NN^k$,
we have
\[
M\big(\{x \in \Lambda^\infty : \sigma^m(x) = \sigma^n(x)\}\big)
    = \begin{cases}
        1 &\text{ if $m-n \in \Per\Lambda$}\\
        0 &\text{ otherwise.}
    \end{cases}
\]
\end{prop}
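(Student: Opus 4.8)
The plan is to split into the two cases of the statement. If $m-n\in\Per\Lambda$, then by Proposition~\ref{prp:per group}(\ref{it:group})~and~(\ref{it:independent}) we have $\sigma^m(x)=\sigma^n(x)$ for \emph{every} $x\in\Lambda^\infty$, so $\{x:\sigma^m(x)=\sigma^n(x)\}=\Lambda^\infty$ and its measure is $1$. The rest of the argument treats the complementary case by proving the contrapositive: writing $F:=\{x\in\Lambda^\infty:\sigma^m(x)=\sigma^n(x)\}$, I would show that $M(F)>0$ forces $m-n\in\Per\Lambda$.

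The argument rests on two structural observations. First, $F$ is forward shift-invariant: applying $\sigma^j$ to the identity $\sigma^m(x)=\sigma^n(x)$ gives $\sigma^m(\sigma^j x)=\sigma^n(\sigma^j x)$, so $x\in F$ implies $\sigma^j(x)\in F$ for all $j\in\NN^k$. Second, $M$ has a self-similarity (Markov) property coming from \eqref{eq:cylinder measure}: for $\nu\in\Lambda$ and $\lambda\in s(\nu)\Lambda$ one has $M(Z(\nu\lambda))=\rho(\Lambda)^{-d(\nu)}M(Z(\lambda))$, so the homeomorphism $h_\nu\colon Z(s(\nu))\to Z(\nu)$, $y\mapsto\nu y$, whose inverse is $\sigma^{d(\nu)}|_{Z(\nu)}$, scales $M$ by the constant $\rho(\Lambda)^{-d(\nu)}$.

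Combining these, forward-invariance gives $h_\nu^{-1}(F\cap Z(\nu))=\{\sigma^{d(\nu)}(x):x\in F\cap Z(\nu)\}\subseteq F\cap Z(s(\nu))$, and the scaling property then yields the key inequality
\[
\frac{M(F\cap Z(\nu))}{M(Z(\nu))}\le\frac{M(F\cap Z(s(\nu)))}{M(Z(s(\nu)))}
\]
for every path $\nu$; that is, the conditional density of $F$ on any cylinder is dominated by its density on the corresponding \emph{vertex} cylinder. I would then apply the martingale convergence theorem to the filtration generated by the partitions $\{Z(\mu):\mu\in\Lambda^{(N,\dots,N)}\}$, which generate the Borel $\sigma$-algebra. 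Since $M(F)>0$, there is a density point $x^\ast\in F$, i.e. with $M(F\cap Z(\nu_N))/M(Z(\nu_N))\to1$ where $\nu_N:=x^\ast(0,(N,\dots,N))$. The key inequality bounds these ratios by the vertex densities $M(F\cap Z(s(\nu_N)))/M(Z(s(\nu_N)))$, and as $\Lambda^0$ is finite some vertex $v$ recurs, forcing $M(F\cap Z(v))=M(Z(v))$.

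To finish, I would use that $x^\Lambda>0$ (Corollary~\ref{cor-PFgraph}(\ref{cor-PFgraph1})), so $M(Z(\lambda))=\rho(\Lambda)^{-d(\lambda)}x^\Lambda_{s(\lambda)}>0$ for every $\lambda$ and hence $M$ has full support. As $F$ is closed, the open set $Z(v)\setminus F$ has measure zero and is therefore empty, giving $Z(v)\subseteq F$, i.e. $\sigma^m(x)=\sigma^n(x)$ for all $x\in Z(v)$. Lemma~\ref{lem:per->bijection}(\ref{it:lp->Per}) propagates this to all $x\in\Lambda^\infty$, so $m-n\in\Per\Lambda$, completing the contrapositive. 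The step I expect to be the main obstacle is that $F$ depends on the initial segment of $x$ and so is not a tail event; the device that overcomes this is exactly the forward-invariance-plus-conditioning inequality above, which transfers the mass of $F$ from a deep cylinder back to its source vertex, where finiteness of $\Lambda^0$ and full support of $M$ can be exploited.
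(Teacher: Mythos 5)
Your proof is correct, but it takes a genuinely different route from the paper's. The paper handles the case $m-n\notin\Per\Lambda$ quantitatively: Lemma~\ref{lem:separating tails} constructs, for each $g\notin\Per\Lambda$, a degree $a$ and ``separating tails'' $\lambda_v\in v\Lambda^a$ whose appending destroys all common extensions of any pair $\mu,\nu$ with $d(\mu)-d(\nu)=g$, and Lemma~\ref{lem:measure estimate} iterates this to get a geometric bound $K^jM(Z(\mu))$ on the measure of the cylinders where $\mu\lambda$ and $\nu\lambda$ still admit a common extension; the proposition follows by covering $\{x:\sigma^m(x)=\sigma^n(x)\}$ by such cylinders and letting $j\to\infty$. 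You instead run a soft, contrapositive argument: forward shift-invariance of $F=\{x:\sigma^m(x)=\sigma^n(x)\}$ plus the self-similarity of $M$ coming from~\eqref{eq:cylinder measure} give the conditional-density inequality $M(F\cap Z(\nu))/M(Z(\nu))\le M(F\cap Z(s(\nu)))/M(Z(s(\nu)))$, and then martingale convergence along the partitions $\{Z(\mu):\mu\in\Lambda^{(N,\dots,N)}\}$, finiteness of $\Lambda^0$, full support of $M$ (from $x^\Lambda>0$) and closedness of $F$ force $Z(v)\subseteq F$ for some vertex $v$, whence Lemma~\ref{lem:per->bijection}(\ref{it:lp->Per}) gives $m-n\in\Per\Lambda$. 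All the individual steps check out (the scaling identity extends from cylinders to Borel sets by the usual uniqueness argument, and the filtration does generate the Borel $\sigma$-algebra). What your approach buys is conceptual economy: it avoids the combinatorial construction of separating tails entirely and replaces it with a standard density-point argument. What it gives up is the quantitative estimate of Lemma~\ref{lem:measure estimate}, which the paper reuses (via Lemma~\ref{lem:CK bound}) in the proof of Theorem~\ref{thm:CK states} to show that any KMS state kills $s_\mu s^*_\nu$ when $d(\mu)-d(\nu)\notin\Per\Lambda$; your argument establishes only the measure-zero statement about $F$ and would not substitute for that later use without further work.
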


The proof of the proposition requires the following two technical lemmas.

\begin{lem}\label{lem:separating tails}
Let $\Lambda$ be a strongly connected finite $k$-graph. Suppose $g \in \ZZ^k \setminus
\Per\Lambda$. Then there exist $a \in \NN^k\setminus\{0\}$ and, for each $v\in
\Lambda^0$, a path $\lambda_v \in v\Lambda^a$ such that for $\mu,\nu\in\Lambda$ with
$s(\mu) = s(\nu)$ and $d(\mu) - d(\nu) = g$ we have $\Lambda^{\min}(\mu\lambda_{s(\mu)},
\nu\lambda_{s(\mu)}) = \emptyset$.
\end{lem}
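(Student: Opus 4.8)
The plan is to translate the conclusion into a statement about the infinite-path space and then reduce it to a single ``self-overlap'' condition on each tail $\lambda_v$ in the direction $g$. Write $g = g_+ - g_-$ with $g_+, g_- \in \NN^k$ and $g_+ \wedge g_- = 0$. First I would record the standard equivalence that, for paths $\alpha,\beta$, one has $\Lambda^{\min}(\alpha,\beta) = \emptyset$ if and only if $Z(\alpha) \cap Z(\beta) = \emptyset$: a pair $(\xi,\zeta) \in \Lambda^{\min}(\alpha,\beta)$ yields a common extension $\alpha\xi = \beta\zeta$, which extends to an infinite path in $Z(\alpha) \cap Z(\beta)$ because $\Lambda$ has no sources (Lemma~\ref{lem-nosources}), and conversely any $x \in Z(\alpha) \cap Z(\beta)$ produces such a pair via $x(0, d(\alpha) \vee d(\beta))$. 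Thus it suffices to find $a$ and tails $\lambda_v$ with $Z(\mu\lambda_{s(\mu)}) \cap Z(\nu\lambda_{s(\mu)}) = \emptyset$ whenever $s(\mu) = s(\nu)$ and $d(\mu) - d(\nu) = g$.

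Next I would analyse a hypothetical collision. Suppose $x \in Z(\mu\lambda_v) \cap Z(\nu\lambda_v)$ with $v = s(\mu) = s(\nu)$, and set $m := d(\mu)$, $n := d(\nu)$, so $m - n = g$. Then $u := \sigma^m(x)$ and $w := \sigma^n(x)$ both lie in $Z(\lambda_v)$, and since $m + g_- = n + g_+$ one computes $\sigma^{g_-}(u) = \sigma^{g_+}(w)$. Taking $a \ge g_+ + g_-$ and setting $c := a - g_+ - g_-$, I would compare the degree-$c$ prefixes of this common infinite path: since $u(0,a) = w(0,a) = \lambda_v$, this forces the windowed identity $\lambda_v(g_-, g_- + c) = \lambda_v(g_+, g_+ + c)$. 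The upshot is clean: a collision can occur only if $\lambda_v$ coincides with itself under a shift by $g$ over a window of degree $c$. Hence it is enough to choose each $\lambda_v$ so that $\lambda_v(g_-, g_- + c) \ne \lambda_v(g_+, g_+ + c)$.

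Finally I would produce such tails using $g \notin \Per\Lambda$. Fixing $v$, suppose no suitable tail exists: then for every $c \in \NN^k$ and every $\lambda \in v\Lambda^{c + g_+ + g_-}$ we would have $\lambda(g_-, g_- + c) = \lambda(g_+, g_+ + c)$. Applying this to $\lambda = x(0, c + g_+ + g_-)$ for an arbitrary $x \in Z(v)$ and letting $c$ run through $\NN^k$ gives $\sigma^{g_-}(x) = \sigma^{g_+}(x)$ for all $x \in Z(v)$; by Lemma~\ref{lem:per->bijection}(\ref{it:lp->Per}) and the definition of $\Per\Lambda$ in Proposition~\ref{prp:per group} this yields $g \in \Per\Lambda$, a contradiction. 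So for each $v$ there is some $c^{(v)}$ and $\lambda_v \in v\Lambda^{c^{(v)} + g_+ + g_-}$ with the strict inequality, hence no collision for $\lambda_v$. Since $Z(\mu\tilde\lambda_v) \subseteq Z(\mu\lambda_v)$ for any extension $\tilde\lambda_v$ of $\lambda_v$, the no-collision property is inherited by extensions; using that $\Lambda$ has no sources I extend all the $\lambda_v$ to the common degree $a := \big(\bigvee_v c^{(v)}\big) + g_+ + g_-$, which is nonzero because $g \ne 0$. This $a$ and family $\{\lambda_v\}$ satisfy the lemma.

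The main obstacle I anticipate is the multi-index sign bookkeeping: because $g \in \ZZ^k$ may have mixed signs one cannot simply shift by $g$, so the argument must carry $g_+$ and $g_-$ simultaneously and identify precisely which windows of $\lambda_v$ are forced to agree. Pinning down the indices in the equivalence ``collision $\Longleftrightarrow$ windowed self-overlap'' is where the care is needed; once that is isolated, the remainder is a routine local-to-global argument powered by strong connectivity via Lemma~\ref{lem:per->bijection}(\ref{it:lp->Per}).
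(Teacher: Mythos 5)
Your proof is correct. The core mechanism is the same as the paper's: write $g = g_+ - g_-$ with $g_\pm \in \NN^k$, observe that a common extension of $\mu\lambda$ and $\nu\lambda$ forces two windows of $\lambda$, offset by $g_+$ and $g_-$ respectively, to coincide, and then choose $\lambda_v$ so that these windows disagree. Where you differ is in how the tails are produced. The paper is constructive and uniform: it fixes a single witness $x \in \Lambda^\infty$ with $\sigma^{g_+}(x) \ne \sigma^{g_-}(x)$, extracts a finite degree $l$ on which the shifts already disagree, and uses strong connectedness to prepend a connecting path $\tau_v \in v\Lambda r(x)$, setting $\lambda_v := \tau_v x(0, a - d(\tau_v))$; the disagreeing window then sits inside the copied segment of $x$, and the conclusion is read off directly from the factorisation property. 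You instead argue vertex by vertex and by contradiction: if no tail at $v$ exhibits the windowed disagreement, then every $x \in Z(v)$ satisfies $\sigma^{g_+}(x) = \sigma^{g_-}(x)$, and Lemma~\ref{lem:per->bijection}(\ref{it:lp->Per}) forces $g \in \Per\Lambda$. This is non-constructive but avoids the bookkeeping with the $\tau_v$ and the case split on whether $\mu$ and $\nu$ share their degree-$p$ prefix; the price is the extra (correct, but not strictly necessary) reformulation of $\Lambda^{\min}(\alpha,\beta) = \emptyset$ in terms of $Z(\alpha) \cap Z(\beta)$, which does require the no-sources hypothesis supplied by Lemma~\ref{lem-nosources}. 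Your final step of padding the $\lambda_v$ to a common degree $a$, using that emptiness of $\Lambda^{\min}$ passes to extensions, is also sound. Note that strong connectedness has not disappeared from your argument; it is merely hidden inside the citation of Lemma~\ref{lem:per->bijection}(\ref{it:lp->Per}).
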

\begin{proof}
Let $m := g \vee 0$ and $n := -g \vee 0$. Then $g = m - n$ and whenever $m', n' \in
\NN^k$ satisfy $m' - n' = g$, we have $m' \ge m$ and $n' \ge n$.

Since $g \not\in \Per\Lambda$, there exists $x \in \Lambda^\infty$ such that $\sigma^m(x)
\not= \sigma^n(x)$. So there exists $l \in \NN^k\setminus\{0\}$ such that $\sigma^m(x)(0,
l) \not= \sigma^n(x)(0, l)$. For each $v \in \Lambda^0$ there exists $\tau_v \in v\Lambda
r(x)$ because $\Lambda$ is strongly connected. Let $a := m + n + l + \bigvee_{v \in
\Lambda^0} d(\tau_v)$. For each $v \in \Lambda^0$ define
\[
\lambda_v := \tau_v x(0, a - d(\tau_v)).
\]

Fix $\mu, \nu \in \Lambda$ such that $d(\mu) - d(\nu) = g$ and $s(\mu) = s(\nu) = v$.
Then $d(\mu)\geq m$, $d(\nu)\geq n$, and there exists $p \in \NN^k$ such that $d(\mu) = m
+ p$ and $d(\nu) = n + p$. Factorise $\mu = \alpha\mu'$ and $\nu = \beta\nu'$ where
$d(\alpha) = d(\beta) = p$, so that $d(\mu') = m$ and $d(\nu') = n$. If $\alpha \not=
\beta$, then $\Lambda^{\min}(\mu,\nu) = \emptyset$ and hence
$\Lambda^{\min}(\mu\lambda_v,\nu\lambda_v) = \emptyset$. So we suppose that $\alpha =
\beta$. Then $\Lambda^{\min}(\mu \lambda_v, \nu \lambda_v) = \Lambda^{\min}(\mu'
\lambda_v, \nu'\lambda_v)$. We have
\begin{align*}
(\mu'\lambda_v)(m + n + d(\tau_v), m + n + d(\tau_v) + l)
    &= \lambda_v(n + d(\tau_v), n + d(\tau_v) + l)\\
    &=x(n, n+l)= \sigma^n(x)(0, l).
\end{align*}
Similarly $(\nu'\lambda_v)(m + n + d(\tau_v), m + n + d(\tau_v) + l) = \sigma^m(x)(0,
l)$. Since $\sigma^m(x)(0, l) \not= \sigma^n(x)(0, l)$ by choice of $x$ and $l$, the
factorisation property gives $\Lambda^{\min}(\mu\lambda_v, \nu\lambda_v) = \emptyset$.
\end{proof}

\begin{lem}\label{lem:measure estimate}
Let $\Lambda$ be a strongly connected finite $k$-graph, and let $M$ be the measure on
$\Lambda^\infty$ obtained in Proposition~\ref{prop:measure e-u}. Suppose that $g \in
\ZZ^k \setminus \Per\Lambda$. There exist $a \in \NN^k \setminus\{0\}$ and $0 < K  < 1$
such that whenever $s(\mu) = s(\nu)$ and $d(\mu) - d(\nu) = g$, we have
\begin{equation}\label{eq:measure estimate}
    M\bigg(\bigcup_{\substack{\lambda \in s(\mu)\Lambda^{ja}\\ \Lambda^{\min}(\mu\lambda,\nu\lambda) \not= \emptyset}}
        Z(\mu\lambda)\bigg) \le K^j M(Z(\mu))\quad\text{ for all $j \in \NN$.}
\end{equation}
\end{lem}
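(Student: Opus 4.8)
The plan is to prove the estimate~\eqref{eq:measure estimate} by induction on $j$, extracting a uniform geometric contraction factor $K$ from a single step. The base case $j=0$ is trivial since the left-hand side is at most $M(Z(\mu))$. For the inductive step, I would first use Lemma~\ref{lem:separating tails} to fix $a \in \NN^k \setminus \{0\}$ and the paths $\lambda_v \in v\Lambda^a$; the key consequence is that for any $\mu,\nu$ with $s(\mu)=s(\nu)$ and $d(\mu)-d(\nu)=g$, appending the \emph{specific} tail $\lambda_{s(\mu)}$ kills the minimal common extensions, i.e.\ $\Lambda^{\min}(\mu\lambda_{s(\mu)}, \nu\lambda_{s(\mu)}) = \emptyset$. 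This tells us that among all $\lambda \in s(\mu)\Lambda^a$, at least one (namely $\lambda_{s(\mu)}$) fails the condition $\Lambda^{\min}(\mu\lambda,\nu\lambda)\neq\emptyset$, so the union in~\eqref{eq:measure estimate} at level $a$ omits at least the cylinder $Z(\mu\lambda_{s(\mu)})$.

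The next step is to convert this ``one omitted cylinder'' observation into a multiplicative gap bounded away from $1$. Using the measure formula~\eqref{eq:cylinder measure}, each $M(Z(\mu\lambda))$ for $\lambda \in s(\mu)\Lambda^a$ equals $\rho(\Lambda)^{-d(\mu)-a}x^\Lambda_{s(\lambda)}$, while $M(Z(\mu)) = \sum_{\lambda \in s(\mu)\Lambda^a} M(Z(\mu\lambda))$ since $Z(\mu)$ partitions into these. Because $x^\Lambda > 0$ (by Corollary~\ref{cor-PFgraph}(\ref{cor-PFgraph1})) and $\Lambda^0$ is finite, the ratio $M(Z(\mu\lambda_{s(\mu)}))/M(Z(\mu))$ is bounded below by a constant $\delta > 0$ that is \emph{uniform} over all $\mu$: indeed this ratio is $\rho(\Lambda)^{-a} x^\Lambda_{s(\lambda_{s(\mu)})} / x^\Lambda_{s(\mu)}$, and there are only finitely many vertices, so I can take $\delta := \min_{v,w} \rho(\Lambda)^{-a} x^\Lambda_w / x^\Lambda_v > 0$. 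Setting $K := 1 - \delta$ then gives $0 < K < 1$, and the single-step estimate
\[
M\bigg(\bigcup_{\substack{\lambda \in s(\mu)\Lambda^{a}\\ \Lambda^{\min}(\mu\lambda,\nu\lambda) \not= \emptyset}} Z(\mu\lambda)\bigg) \le K\, M(Z(\mu)).
\]

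The crux of the induction is then a telescoping argument that reduces level $(j+1)a$ to level $ja$. Here I would exploit that the condition $\Lambda^{\min}(\mu\lambda,\nu\lambda)\neq\emptyset$ is hereditary under extension: if $\lambda = \lambda'\lambda''$ with $d(\lambda')=ja$ and $\Lambda^{\min}(\mu\lambda,\nu\lambda)\neq\emptyset$, then necessarily $\Lambda^{\min}(\mu\lambda',\nu\lambda')\neq\emptyset$ (a minimal common extension of the longer paths restricts to one of the shorter paths). Consequently the surviving cylinders at level $(j+1)a$ all sit inside surviving cylinders at level $ja$, and within each such $Z(\mu\lambda')$ I apply the single-step estimate to the pair $(\mu\lambda', \nu\lambda')$ — which still satisfies $d(\mu\lambda')-d(\nu\lambda') = g$ and $s(\mu\lambda')=s(\nu\lambda')$ — to gain another factor of $K$. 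Summing over the level-$ja$ survivors and invoking the inductive hypothesis yields the bound $K^{j+1}M(Z(\mu))$.

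The main obstacle I anticipate is verifying the hereditary property cleanly, namely that $\Lambda^{\min}(\mu\lambda'\lambda'', \nu\lambda'\lambda'')\neq\emptyset$ forces $\Lambda^{\min}(\mu\lambda',\nu\lambda')\neq\emptyset$; this requires a careful application of the factorisation property to show that a common extension of the extended paths truncates to a common extension of the shorter ones. Once that structural fact is in place, the geometric decay and the uniformity of $K$ over all admissible $\mu,\nu$ follow from the explicit measure formula and finiteness of $\Lambda^0$, so the bulk of the remaining work is bookkeeping with the disjoint cylinder decompositions.
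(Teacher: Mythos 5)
Your proposal is correct and follows essentially the same route as the paper's proof: fix $a$ and the tails $\lambda_v$ from Lemma~\ref{lem:separating tails}, extract a uniform $K<1$ from the fact that the always-omitted cylinder $Z(\mu\lambda_{s(\mu)})$ has measure a definite fraction of $M(Z(\mu))$ (the paper phrases this as $M(Z(v)\setminus Z(\lambda_v)) < KM(Z(v))$ over the finitely many vertices), and then induct on $j$ using the hereditary property of nonempty $\Lambda^{\min}$ under extension — which the paper states in the contrapositive form $\Lambda^{\min}(\eta,\zeta)=\emptyset \Rightarrow \Lambda^{\min}(\eta\xi,\zeta\xi)=\emptyset$, and which holds because any common extension of $\mu\lambda'\lambda''$ and $\nu\lambda'\lambda''$ truncates via the factorisation property to a minimal common extension of $\mu\lambda'$ and $\nu\lambda'$. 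The one structural fact you flagged as a possible obstacle is exactly the lemma the paper uses, and it goes through as you anticipate.
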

\begin{proof}
By Lemma~\ref{lem:separating tails} there exist $a \in \NN^k\setminus\{0\}$ and
$\lambda_v \in v\Lambda^a$ for each $v \in \Lambda^0$ such that
$\Lambda^{\min}(\mu\lambda_v, \nu\lambda_v) = \emptyset$ whenever $\mu,\nu \in \Lambda v$
satisfy $d(\mu) - d(\nu) = g$.

Let $v\in\Lambda^0$. Equation~\eqref{eq:cylinder measure} implies that  $0 <
M(Z(\lambda_v))$. Thus $M(Z(v) \setminus Z(\lambda_v)) < M(Z(v))$. Since $\Lambda^0$ is
finite, there exists  $0 < K < 1$ such that
\[
    M(Z(v) \setminus Z(\lambda_v)) < K M(Z(v)) < M(Z(v))\quad\text{ for all $v \in \Lambda^0$.}
\]
Fix $\mu,\nu$ such that $s(\mu) = s(\nu)$ and $d(\mu) - d(\nu) = g$. We
prove~\eqref{eq:measure estimate} by induction on $j$. When $j = 0$ both sides
of~\eqref{eq:measure estimate} are just $M(Z(\mu))$, so the inequality is trivial.

Now suppose that~\eqref{eq:measure estimate} holds for some $j \ge 0$. If $\eta,\zeta \in
\Lambda$ satisfy $\Lambda^{\min}(\eta, \zeta) = \emptyset$, then
$\Lambda^{\min}(\eta\xi,\zeta\xi) = \emptyset$ for all $\xi$. Using this for the second
equality, we calculate:
\begin{align*}
\bigcup_{\substack{\lambda \in s(\mu)\Lambda^{(j+1)a}\\ \Lambda^{\min}(\mu\lambda,\nu\lambda) \not= \emptyset}}
        Z(\mu\lambda)
    &=\bigcup_{\eta \in s(\mu)\Lambda^{ja}}
       \bigcup_{\substack{\xi \in s(\eta)\Lambda^a\\ \Lambda^{\min}(\mu\eta\xi, \nu\eta\xi) \not= \emptyset}}
            Z(\mu\eta\xi) \\
    &= \bigcup_{\substack{\eta \in s(\mu)\Lambda^{ja}\\ \Lambda^{\min}(\mu\eta,\nu\eta) \not= \emptyset}}
      \bigcup_{\substack{\xi \in s(\eta)\Lambda^a\\ \Lambda^{\min}(\mu\eta\xi, \nu\eta\xi) \not= \emptyset}}
        Z(\mu\eta\xi)
    \subseteq \bigcup_{\substack{\eta \in s(\mu)\Lambda^{ja}\\ \Lambda^{\min}(\mu\eta,\nu\eta) \not= \emptyset}}
      \bigcup_{\xi \in s(\eta)\Lambda^a \setminus \{\lambda_{s(\eta)}\}}
        Z(\mu\eta\xi).
\end{align*}
Hence
\begin{flalign*}
&&M\bigg(\bigcup_{\substack{\lambda \in s(\mu)\Lambda^{(j+1)a}\\
                    \Lambda^{\min}(\mu\lambda,\nu\lambda) \not= \emptyset}}
        Z(\mu\lambda)\bigg)
    &\le \sum_{\substack{\eta \in s(\mu)\Lambda^{ja}\\ \Lambda^{\min}(\mu\eta,\nu\eta) \not= \emptyset}}
            \rho(\Lambda)^{-d(\mu\eta)} \sum_{\xi \in s(\eta)\Lambda^a \setminus \{\lambda_{s(\eta)}\}}
                M(Z(\xi)) &\\
    &&&= \sum_{\substack{\eta \in s(\mu)\Lambda^{ja}\\ \Lambda^{\min}(\mu\eta,\nu\eta) \not= \emptyset}}
            \rho(\Lambda)^{-d(\mu\eta)} M\big(Z(s(\eta)) \setminus Z(\lambda_{s(\eta)})\big) &\\
    &&&< K \sum_{\substack{\eta \in s(\mu)\Lambda^{ja}\\ \Lambda^{\min}(\mu\eta,\nu\eta) \not= \emptyset}}
            \rho(\Lambda)^{-d(\mu\eta)} M(Z(s(\eta))) &\\
    &&&= K M\bigg(\bigcup_{\substack{\eta \in s(\mu)\Lambda^{ja}\\ \Lambda^{\min}(\mu\eta,\nu\eta) \not= \emptyset}}
            Z(\mu\eta)\bigg) &\\
    &&&\le K^{j+1} M(Z(\mu)) &
\end{flalign*}
by the induction hypothesis.
\end{proof}

\begin{proof}[Proof of Proposition~\ref{prp:m sees Per}]
Let $m-n \in \Per\Lambda$. Then $M\big(\{x \in \Lambda^\infty : \sigma^m(x) =
\sigma^n(x)\}\big) = M(\Lambda^\infty) = 1$ because $M$ is a probability measure.

Now suppose that $m - n \not\in \Per\Lambda$. Let $a$ and $K$ be as in
Lemma~\ref{lem:measure estimate}. Fix $j \in \NN$. We claim that
\[
\{x \in \Lambda^\infty : \sigma^m(x) = \sigma^n(x)\}
    \subseteq \bigcup_{\mu \in \Lambda^m, \nu\in \Lambda^n s(\mu)}
    \bigcup_{\substack{\lambda \in s(\mu)\Lambda^{ja}\\ \Lambda^{\min}(\mu\lambda,\nu\lambda) \not= \emptyset}}
        Z(\mu\lambda).
\]
To see this, let $x\in \Lambda^\infty$ and suppose that $\sigma^m(x) = \sigma^n(x)$. Let
$\mu := x(0,m)$, $\nu := x(0,n)$ and $\lambda := \sigma^m(x)(0, ja) = \sigma^n(x)(0,
ja)$. Then $x(0, (m \vee n) + ja) = \mu\lambda\alpha = \nu\lambda\beta$ for some
$\alpha,\beta$, and then $(\alpha,\beta) \in \Lambda^{\min}(\mu\lambda,\nu\lambda)$.
Since $x \in Z(\mu\lambda)$, this establishes the claim. Now Lemma~\ref{lem:measure
estimate} implies that for all $j$,
\[
M(\{x \in
\Lambda^\infty : \sigma^m(x)= \sigma^n(x)\}) \leq \sum_{\mu\in\Lambda^m, \nu\in\Lambda^n} K^jM(Z(\mu)) \le |\Lambda^m|\cdot |\Lambda^n| \cdot K^j.
\]
Since $K < 1$, the right-hand side goes to zero as $j \to \infty$.
\end{proof}

\section{A formula for KMS states on the Cuntz-Krieger algebra}\label{sec:injectivity}

The next step in our proof of Theorem~\ref{thm:main} is to establish a formula for a KMS
state $\phi$ of $C^*(\Lambda)$ in terms of $\phi \circ \pi_U$. We will use this later to
show that $\pi^*_U$ is a continuous affine injection from KMS$_1$ states of
$(C^*(\Lambda), \alpha)$ to states of $C^*(\Per\Lambda)$.

\begin{thm}\label{thm:CK states}
Let $\Lambda$ be a strongly connected finite $k$-graph, let $x^\Lambda$ be the unimodular
Perron-Frobenius eigenvector of $\Lambda$, and let $\alpha$ be the preferred dynamics on
$C^*(\Lambda)$. Let $U : \Per\Lambda \to C^*(\Lambda)$ be the unitary representation
$m-n\mapsto \sum_{\mu\in\Lambda^m}s_\mu s_{\theta_{m,n}(\mu)}^*$ of
Proposition~\ref{prp:unitary group}. If $\phi$ is a KMS$_1$ state for
$(C^*(\Lambda),\alpha)$, then
\begin{equation}\label{eq:CK phi formula}
\phi(s_\mu s^*_\nu)
    = \begin{cases}
        \rho(\Lambda)^{-d(\mu)} x^\Lambda_{s(\mu)} \phi(U_{d(\mu) - d(\nu)})
            &\text{ if $d(\mu) - d(\nu) \in \Per\Lambda$}\\
            &\qquad\text{ and $\theta_{d(\mu), d(\nu)}(\mu) =  \nu$} \\
        0   &\text{ otherwise.}
    \end{cases}
\end{equation}
\end{thm}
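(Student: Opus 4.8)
The plan is to split the evaluation of $\phi(s_\mu s^*_\nu)$ according to whether $d(\mu)-d(\nu)$ lies in $\Per\Lambda$ and, in that case, whether $\nu=\theta_{d(\mu),d(\nu)}(\mu)$. Write $m:=d(\mu)$ and $n:=d(\nu)$. Since $s_\mu s^*_\nu=0$ unless $s(\mu)=s(\nu)$, and since $\theta_{m,n}$ preserves sources (so the right-hand side of \eqref{eq:CK phi formula} also vanishes when $s(\mu)\ne s(\nu)$), I may assume $s(\mu)=s(\nu)$. The workhorse throughout is the KMS$_1$ identity $\alpha_i(s_\mu)=\rho(\Lambda)^{-m}s_\mu$, which yields
\[
\phi(s_\mu s^*_\nu)=\rho(\Lambda)^{-m}\,\phi(s^*_\nu s_\mu)
\quad\text{where}\quad
s^*_\nu s_\mu=\sum_{(\alpha,\beta)\in\Lambda^{\min}(\nu,\mu)} s_\alpha s^*_\beta .
\]

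I would first dispose of the case $m-n\notin\Per\Lambda$; this is the main technical obstacle. Let $a\in\NN^k\setminus\{0\}$ and $0<K<1$ be the data supplied by Lemma~\ref{lem:measure estimate} for $g=m-n$. Using (CK4) at the source, $s_\mu s^*_\nu=\sum_{\lambda\in s(\mu)\Lambda^{ja}}s_{\mu\lambda}s^*_{\nu\lambda}$ for each $j\in\NN$, and the displayed identity shows $\phi(s_{\mu\lambda}s^*_{\nu\lambda})=0$ whenever $\Lambda^{\min}(\mu\lambda,\nu\lambda)=\emptyset$, so only the surviving $\lambda$ contribute. Applying the Cauchy--Schwarz inequality for $\phi$ to each term gives $|\phi(s_{\mu\lambda}s^*_{\nu\lambda})|\le M(Z(\mu\lambda))^{1/2}M(Z(\nu\lambda))^{1/2}$, and then Cauchy--Schwarz for the finite sum, combined with the estimate $\sum_\lambda M(Z(\mu\lambda))\le K^jM(Z(\mu))$ of Lemma~\ref{lem:measure estimate} and the trivial bound $\sum_\lambda M(Z(\nu\lambda))\le M(Z(\nu))$ (disjoint sub-cylinders of $Z(\nu)$), produces $|\phi(s_\mu s^*_\nu)|\le\big(K^jM(Z(\mu))\big)^{1/2}M(Z(\nu))^{1/2}$. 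Letting $j\to\infty$ forces $\phi(s_\mu s^*_\nu)=0$.

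Next I would treat $m-n\in\Per\Lambda$ with $\nu\neq\theta_{m,n}(\mu)$. The defining property of $\theta_{m,n}$ gives $Z(\mu)=Z(\theta_{m,n}(\mu))$, and disjointness of the cylinders over distinct paths of degree $n$ yields $Z(\mu)\cap Z(\nu)=\emptyset$. A nonempty $\Lambda^{\min}(\mu,\nu)$ would produce a common extension whose (nonempty, as $\Lambda$ has no sources) cylinder sits inside $Z(\mu)\cap Z(\nu)$; hence $\Lambda^{\min}(\mu,\nu)=\emptyset$, so $\Lambda^{\min}(\nu,\mu)=\emptyset$, $s^*_\nu s_\mu=0$, and $\phi(s_\mu s^*_\nu)=0$.

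Finally, for $m-n\in\Per\Lambda$ and $\nu=\theta_{m,n}(\mu)$ I would compute the value. Corollary~\ref{cor:lmin<->theta} evaluates $\Lambda^{\min}(\theta_{m,n}(\mu),\mu)$ and shows $s^*_{\theta_{m,n}(\mu)}s_\mu=p_{s(\mu)}U_{m-n}$, so the KMS identity gives $\phi(s_\mu s^*_\nu)=\rho(\Lambda)^{-m}\phi(p_{s(\mu)}U_{m-n})$. The conceptual heart of the proof is then the identity
\[
\phi(p_v U_g)=x^\Lambda_v\,\phi(U_g)\qquad(v\in\Lambda^0,\ g\in\Per\Lambda).
\]
To establish it I would set $\psi(v):=\phi(p_v U_g)$, expand $p_v=\sum_{\lambda\in v\Lambda^{e_i}}s_\lambda s^*_\lambda$, and use the KMS identity together with the centrality of $U_g$ (Proposition~\ref{prp:unitary group}) to get $\phi(s_\lambda s^*_\lambda U_g)=\rho(A_i)^{-1}\phi(p_{s(\lambda)}U_g)$; summing over $\lambda\in v\Lambda^{e_i}$ shows $A_i\psi=\rho(A_i)\psi$ for every $i$. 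By Corollary~\ref{cor-PFgraph}(\ref{cor-PFgraph2}) we get $\psi\in\CC x^\Lambda$, and since $\sum_v\psi(v)=\phi(U_g)$ while $\sum_v x^\Lambda_v=1$, this forces $\psi=\phi(U_g)\,x^\Lambda$. Feeding this back, and using $M(Z(\mu))=\rho(\Lambda)^{-m}x^\Lambda_{s(\mu)}$ from \eqref{eq:cylinder measure}, yields $\phi(s_\mu s^*_\nu)=\rho(\Lambda)^{-m}x^\Lambda_{s(\mu)}\phi(U_{m-n})$, as claimed. I expect the $m-n\notin\Per\Lambda$ case to be the genuine hurdle, as it alone needs the asymptotic measure estimates, whereas the remaining cases collapse to the Perron--Frobenius eigenvector identity above.
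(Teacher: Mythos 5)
Your proof is correct and follows essentially the same route as the paper's: the same three-way case split, the same use of Lemma~\ref{lem:measure estimate} to kill the case $d(\mu)-d(\nu)\notin\Per\Lambda$, the same reduction via Corollary~\ref{cor:lmin<->theta} to $\rho(\Lambda)^{-d(\mu)}\phi(p_{s(\mu)}U_{m-n})$, and the same Perron--Frobenius argument showing $\phi(p_vU_g)=x^\Lambda_v\phi(U_g)$ via Corollary~\ref{cor-PFgraph}(\ref{cor-PFgraph2}). The only cosmetic differences are that you run the vanishing estimate by Cauchy--Schwarz on the sum, which sidesteps the preliminary split on whether $\rho(\Lambda)^{d(\mu)}=\rho(\Lambda)^{d(\nu)}$ made in Lemma~\ref{lem:CK bound}, and that you dispose of the subcase $\nu\neq\theta_{d(\mu),d(\nu)}(\mu)$ by showing $\Lambda^{\min}(\mu,\nu)=\emptyset$ rather than by inserting $s_\mu s^*_\mu=s_{\theta_{d(\mu),d(\nu)}(\mu)}s^*_{\theta_{d(\mu),d(\nu)}(\mu)}$ and using the trace property on the degree-zero part.
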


\begin{rmk}
On the face of it, the formula~\eqref{eq:CK phi formula} doesn't appear to satisfy
$\phi(s_\mu s^*_\nu) = \overline{\phi(s_\nu s^*_\mu)}$ (as a state must) because the
coefficient $\rho(\Lambda)^{-d(\mu)}$ doesn't appear to be symmetric in $\mu$ and $\nu$.
But all is well: \eqref{eq:rationally dependent} shows that $\rho(\Lambda)^{-d(\mu)} =
\rho(\Lambda)^{-d(\nu)}$ for $d(\mu) - d(\nu) \in \Per\Lambda$.
\end{rmk}

Our proof of Theorem~\ref{thm:CK states} requires a preliminary lemma.

\begin{lem}\label{lem:CK bound}
Let $\Lambda$ be a strongly connected finite $k$-graph. Let $\alpha$ be the preferred
dynamics on $C^*(\Lambda)$. Suppose that $\phi$ is a KMS$_1$ state for $(C^*(\Lambda),
\alpha)$. Let $\mu,\nu \in \Lambda$ with $s(\mu) = s(\nu)$. Then for every $p \in \NN^k$
we have
\[
    |\phi(s_\mu s^*_\nu)| \le
        \sum_{\substack{\lambda \in s(\mu)\Lambda^p\\ \Lambda^{\min}(\mu\lambda,\nu\lambda) \not= \emptyset}}
                                \phi(s_{\mu\lambda} s^*_{\mu\lambda}).
\]
\end{lem}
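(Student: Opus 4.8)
The plan is to insert a resolution of the identity at the common source vertex and then control the resulting terms with the Cauchy--Schwarz inequality for the state $\phi$. Write $v := s(\mu) = s(\nu)$. Since $s_\mu = s_\mu p_v$ and (CK4) gives $p_v = \sum_{\lambda \in v\Lambda^p} s_\lambda s_\lambda^*$, we get
\[
s_\mu s_\nu^* = s_\mu p_v s_\nu^* = \sum_{\lambda \in v\Lambda^p} s_{\mu\lambda} s_{\nu\lambda}^*,
\]
and applying $\phi$ gives $\phi(s_\mu s_\nu^*) = \sum_{\lambda \in v\Lambda^p} \phi(s_{\mu\lambda} s_{\nu\lambda}^*)$.

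First I would discard the terms for which $\Lambda^{\min}(\mu\lambda,\nu\lambda) = \emptyset$. For these, the KMS$_1$ condition, together with $\alpha_i(s_{\mu\lambda}) = \rho(\Lambda)^{-d(\mu\lambda)} s_{\mu\lambda}$, yields $\phi(s_{\mu\lambda} s_{\nu\lambda}^*) = \rho(\Lambda)^{-d(\mu\lambda)} \phi(s_{\nu\lambda}^* s_{\mu\lambda})$; and $s_{\nu\lambda}^* s_{\mu\lambda} = \sum_{(\alpha,\beta) \in \Lambda^{\min}(\nu\lambda,\mu\lambda)} s_\alpha s_\beta^* = 0$, since $\Lambda^{\min}(\nu\lambda,\mu\lambda) = \emptyset$ exactly when $\Lambda^{\min}(\mu\lambda,\nu\lambda) = \emptyset$. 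Hence only the terms with $\Lambda^{\min}(\mu\lambda,\nu\lambda) \neq \emptyset$ survive, and the triangle inequality gives $|\phi(s_\mu s_\nu^*)| \le \sum_{\lambda} |\phi(s_{\mu\lambda} s_{\nu\lambda}^*)|$, the sum running over those $\lambda$. For each surviving term I would apply Cauchy--Schwarz in the form $|\phi(s_{\mu\lambda} s_{\nu\lambda}^*)|^2 \le \phi(s_{\mu\lambda} s_{\mu\lambda}^*)\, \phi(s_{\nu\lambda} s_{\nu\lambda}^*)$, and compute each diagonal value by the KMS condition as $\phi(s_{\eta} s_{\eta}^*) = \rho(\Lambda)^{-d(\eta)} \phi(p_{s(\eta)})$, so that the two factors equal $\rho(\Lambda)^{-d(\mu)-d(\lambda)}\phi(p_{s(\lambda)})$ and $\rho(\Lambda)^{-d(\nu)-d(\lambda)}\phi(p_{s(\lambda)})$.

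The main obstacle is that these two factors need not be equal: their ratio is $\rho(\Lambda)^{d(\nu)-d(\mu)}$, so Cauchy--Schwarz on its own bounds $|\phi(s_{\mu\lambda} s_{\nu\lambda}^*)|$ by the geometric mean of $\phi(s_{\mu\lambda} s_{\mu\lambda}^*)$ and $\phi(s_{\nu\lambda} s_{\nu\lambda}^*)$, rather than by $\phi(s_{\mu\lambda} s_{\mu\lambda}^*)$ itself. I would resolve this using the $\alpha$-invariance of KMS states. Since $\alpha_t(s_\mu s_\nu^*) = \rho(\Lambda)^{it(d(\mu)-d(\nu))} s_\mu s_\nu^*$ and $\phi \circ \alpha_t = \phi$, we have $\phi(s_\mu s_\nu^*) = \rho(\Lambda)^{it(d(\mu)-d(\nu))}\phi(s_\mu s_\nu^*)$ for all $t \in \RR$. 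If $\rho(\Lambda)^{d(\mu)} \neq \rho(\Lambda)^{d(\nu)}$, this forces $\phi(s_\mu s_\nu^*) = 0$, and the asserted inequality holds trivially because its right-hand side is a sum of non-negative terms.

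It remains to treat the case $\rho(\Lambda)^{d(\mu)} = \rho(\Lambda)^{d(\nu)}$. Then $\rho(\Lambda)^{-d(\mu\lambda)} = \rho(\Lambda)^{-d(\nu\lambda)}$ for every $\lambda$, so the two Cauchy--Schwarz factors coincide and we obtain the clean termwise bound $|\phi(s_{\mu\lambda} s_{\nu\lambda}^*)| \le \phi(s_{\mu\lambda} s_{\mu\lambda}^*)$. Summing over the surviving $\lambda \in s(\mu)\Lambda^p$ with $\Lambda^{\min}(\mu\lambda,\nu\lambda) \neq \emptyset$ then yields the claimed inequality. (One could equivalently run the Cauchy--Schwarz argument in aggregate, writing $B = \sum_{\Lambda^{\min}(\mu\lambda,\nu\lambda)\neq\emptyset} s_{\mu\lambda}s_{\nu\lambda}^*$ with $BB^* = \sum s_{\mu\lambda}s_{\mu\lambda}^*$ and $B^*B = \sum s_{\nu\lambda}s_{\nu\lambda}^*$, but the termwise version keeps the role of the invariance step most transparent.)
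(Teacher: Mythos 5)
Your proposal is correct and follows essentially the same route as the paper: split off the case $\rho(\Lambda)^{d(\mu)}\neq\rho(\Lambda)^{d(\nu)}$ (where the KMS condition forces $\phi(s_\mu s^*_\nu)=0$), expand via (CK4), kill the terms with $\Lambda^{\min}(\mu\lambda,\nu\lambda)=\emptyset$ using the KMS condition, and bound the surviving terms by Cauchy--Schwarz, using the equality of spectral-radius powers to replace the geometric mean by $\phi(s_{\mu\lambda}s^*_{\mu\lambda})$. The only difference is cosmetic: you derive the vanishing in the unequal case from $\alpha$-invariance of $\phi$, while the paper applies the KMS condition twice, and you spell out the Cauchy--Schwarz details that the paper delegates to citations.
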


\begin{proof}
First suppose that $\rho(\Lambda)^{d(\mu)} \not= \rho(\Lambda)^{d(\nu)}$. Applying the
KMS condition twice, as in the end of the proof of
\cite[Proposition~3.1\,(b)]{aHLRSk-graphs}, gives $\phi(s_\mu s^*_\nu) =
\rho(\Lambda)^{d(\mu) - d(\nu)} \phi(s_\mu s^*_\nu)$. Hence $\phi(s_\mu s^*_\nu) = 0$,
and the result is trivial.

Second suppose that $\rho(\Lambda)^{d(\mu)} = \rho(\Lambda)^{d(\nu)}$. Applying~(CK4) and
the triangle inequality gives $|\phi(s_\mu s^*_\nu)| \le \sum_{\lambda \in
s(\mu)\Lambda^p} |\phi(s_{\mu\lambda} s^*_{\nu\lambda})|$. As in the proof of
\cite[Lemma~5.3\,(a)]{aHLRSk-graphs}, the KMS condition combined with the relation
$s^*_\eta s_\zeta = \sum_{(\alpha,\beta) \in \Lambda^{\min}(\eta,\zeta)} s_\alpha
s^*_\beta$ shows that $\phi(s_{\mu\lambda} s^*_{\nu\lambda}) = 0$ whenever
$\Lambda^{\min}(\mu\lambda, \nu\lambda) = \emptyset$. Hence
\[
\sum_{\lambda \in s(\mu)\Lambda^p} |\phi(s_{\mu\lambda} s^*_{\nu\lambda})|
    = \sum_{\substack{\lambda \in s(\mu)\Lambda^p\\ \Lambda^{\min}(\mu\lambda,\nu\lambda) \not= \emptyset}}
        |\phi(s_{\mu\lambda} s^*_{\nu\lambda})|.
\]
Since $\rho(\Lambda)^{d(\mu)} = \rho(\Lambda)^{d(\nu)}$, an argument using the
Cauchy-Schwarz inequality (see \cite[Lemma~5.2]{aHLRSk-graphs}) shows that each
$|\phi(s_{\mu\lambda} s^*_{\nu\lambda})| \le \phi(s_{\mu\lambda} s^*_{\mu\lambda})$, and
the result follows.
\end{proof}

\begin{proof}[Proof of Theorem~\ref{thm:CK states}]
Let $M$ be the measure on $\Lambda^\infty$ obtained in Proposition~\ref{prop:measure
e-u}, so that $\phi(s_\lambda s^*_\lambda) = M(Z(\lambda))$ for all $\lambda \in
\Lambda$.

First suppose that $d(\mu) - d(\nu) \not\in \Per\Lambda$. Choose $a \in \NN^k$ and $0 < K
< 1$ as in Lemma~\ref{lem:measure estimate}. For $j \in \NN$, Lemma~\ref{lem:CK bound}
implies that
\[
|\phi(s_\mu s^*_\nu)|
    \le \sum_{\substack{\lambda \in s(\mu)\Lambda^{ja}\\ \Lambda^{\min}(\mu\lambda,\nu\lambda) \not= \emptyset}}
                         \phi(s_{\mu\lambda} s^*_{\mu\lambda})
    = M\bigg(\bigcup_{\substack{\lambda \in s(\mu)\Lambda^{ja}\\ \Lambda^{\min}(\mu\lambda,\nu\lambda) \not= \emptyset}}
                         Z(\mu\lambda)\bigg).
\]
By choice of $K$ and $a$, the right-hand side is dominated by $K^j M(Z(\mu))$. This goes
to zero as $j \to \infty$, and so $\phi(s_\mu s^*_\nu) = 0$.

Now suppose that $\mu \in \Lambda^m$ and $\nu \in \Lambda^n$ with $m-n \in \Per\Lambda$.
We start by showing that
\begin{equation}\label{eq:red to source}
    \phi(s_\mu s^*_\nu) = \delta_{\theta_{m,n}(\mu), \nu} \rho(\Lambda)^{-m} \phi(p_{s(\mu)} U_{m-n}).
\end{equation}
Lemma~\ref{lem:per->bijection} implies that $s_\mu s^*_\mu = s_{\theta_{m,n}(\mu)}
s^*_{\theta_{m,n}(\mu)}$, and so the KMS condition implies that
\[
\phi(s_\mu s^*_\nu)
    = \phi(s_{\theta_{m,n}(\mu)} s^*_{\theta_{m,n}(\mu)} s_\mu s^*_\nu)
    = \phi(s_\mu s^*_\nu s_{\theta_{m,n}(\mu)} s^*_{\theta_{m,n}(\mu)})
    =\delta_{\nu, \theta_{m,n}(\mu)}\phi(s_\mu s^*_{\theta_{m,n}(\mu)})
\]
since $d(\nu) = d(\theta_{m,n}(\mu))$. The KMS condition gives
\[
\phi(s_\mu s^*_{\theta_{m,n}(\mu)})
    = \rho(\Lambda)^{-m} \phi(s^*_{\theta_{m,n}(\mu)} s_\mu)
    = \rho(\Lambda)^{-m} \phi\Big(\sum_{(\alpha,\beta) \in \Lambda^{\min}(\theta_{m,n}(\mu),\mu)} s_\alpha s^*_\beta\Big).
\]
Let $p := (m \vee n) - m$ and $q := (m \vee n) - n$. Corollary~\ref{cor:lmin<->theta}
implies that $\Lambda^{\min}(\theta_{m,n}(\mu), \mu) = \{(\alpha, \theta_{q,p}(\alpha) :
\alpha \in s(\mu)\Lambda^q\}$. Hence
\begin{align*}
\phi(s_\mu s^*_{\theta_{m,n}(\mu)})
    &= \rho(\Lambda)^{-m} \phi\Big(\sum_{\alpha \in s(\mu)\Lambda^q} s_\alpha s^*_{\theta_{q,p}(\alpha)}\Big)\\
    &= \rho(\Lambda)^{-m} \phi(p_{s(\mu)}U_{q-p})
    =\rho(\Lambda)^{-m}\phi(p_{s(\mu)} U_{m-n})
\end{align*}
since $q - p = m - n$. This gives~\eqref{eq:red to source}.

To establish~\eqref{eq:CK phi formula}, it now suffices to show that $\phi(p_v U_{n-m}) =
x^\Lambda_v \phi(U_{n-m})$ for all $v \in \Lambda^0$. To see this, consider the vector
$(y^{n-m}_v) \in \CC^{\Lambda^0}$ defined by $y^{n-m}_v = \phi(p_v U_{n-m})$. Fix $1 \le
i \le k$ and $v \in \Lambda^0$. Proposition~\ref{prp:unitary group} implies that
$U_{n-m}$ is central in $C^*(\Lambda)$. Using this and the Cuntz-Krieger relation and
then the KMS condition, we calculate:
\begin{align*}
y^{n-m}_v
    &= \phi(p_v U_{n-m})
    = \sum_{\lambda \in v\Lambda^{e_i}} \phi(s_\lambda s^*_\lambda U_{n-m})
    = \sum_{\lambda \in v\Lambda^{e_i}} \phi(s_\lambda U_{n-m} s^*_\lambda)\\
    &= \sum_{\lambda \in v\Lambda^{e_i}} \rho(\Lambda)_i^{-1} \phi(p_{s(\lambda)} U_{n-m})
    = \rho(A_i)^{-1} \sum_{w \in \Lambda^0} A_i(v,w) y^{n-m}_w
    = \rho(A_i)^{-1} (A_i y^{n-m})_v.
\end{align*}
Hence $y^{n-m}$ is an eigenvector of each $A_i$ with eigenvalue $\rho(A_i)$.
Corollary~\ref{cor-PFgraph}(\ref{cor-PFgraph2}) now implies that $y^{n-m} = zx^\Lambda$
for some $z \in \CC$. Since $x^\Lambda$ has unit $1$-norm, we have
\[
z = \sum_{v \in \Lambda^0} z x^\Lambda_v
    = \sum_{v \in \Lambda^0} y^{n-m}_v
    = \phi\Big(\sum_{v \in \Lambda^0} p_v U_{n-m}\Big)
    = \phi(U_{n-m}).\qedhere
\]
\end{proof}

\section{Constructing KMS states on the Cuntz-Krieger algebra}\label{sec:surjectivity}

In this section we construct a KMS$_1$ state $\phi_1$ of $(C^*(\Lambda), \alpha)$ such
that $\pi^*_U\phi_1$ is the identity character of $C^*(\Per\Lambda)$. We then show that
every character of $C^*(\Per\Lambda)$ is obtained by composing $\pi^*_U \phi_1$ with a
gauge automorphism $\gamma_z$. At the end of the section we combine this with
Theorem~\ref{thm:CK states} to prove our main theorem.

Let $\{h_x : x \in \Lambda^\infty\}$ be the orthonormal basis of point masses in
$\ell^2(\Lambda^\infty)$. Recall from the proof of \cite[Proposition~2.11]{KP} that there
is a Cuntz-Krieger $\Lambda$-family $\{S_\lambda : \lambda \in \Lambda\}$ in
$\Bb(\ell^2(\Lambda^\infty))$ such that $S_\lambda h_x = \delta_{s(\lambda), r(x)}
h_{\lambda x}$. We then have $S^*_\lambda h_x = \delta_{\lambda, x(0, d(\lambda))}
h_{\sigma^{d(\lambda)}(x)}$. The universal property of $C^*(\Lambda)$ implies that there
is a representation $\pi_S : C^*(\Lambda) \to \Bb(\ell^2(\Lambda^\infty))$ such that
$\pi_S(s_\lambda) = S_\lambda$. We call $\pi_S$ the \emph{infinite-path representation}.

\begin{lem}\label{lem:measurable}
Let $\Lambda$ be a strongly connected finite $k$-graph, and let $M$ be the measure on
$\Lambda^\infty$ obtained in Proposition~\ref{prop:measure e-u}.
\begin{enumerate}
\item\label{item-alem:measurable} Let $\mu,\nu \in \Lambda$. Then
\begin{align*}
M\big(\{x \in \Lambda^\infty &: x=\mu y=\nu y\text{\ for some $y\in\Lambda^\infty$}\}\big)\\
&=
\begin{cases}
M(Z(\mu))&\text{if $d(\mu) - d(\nu) \in \Per\Lambda$ and $\theta_{d(\mu),d(\nu)}(\mu) = \nu$}\\
0&otherwise.
\end{cases}
\end{align*}
\item\label{item-blem:measurable} Let $\pi_S$ be the infinite-path representation.
    For $a \in C^*(\Lambda)$, the function $f_a : x\mapsto \big(\pi_S(a) h_x \mid
    h_x\big)$ is $M$-integrable and
    \[
    \Big|\int_{\Lambda^\infty} \big(\pi_S(a) h_x \mid h_x\big)\,dM(x)\Big| \le\|a\|.
    \]
\end{enumerate}
\end{lem}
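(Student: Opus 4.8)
The plan is to prove the two parts in order, deriving part~(\ref{item-blem:measurable}) from part~(\ref{item-alem:measurable}) together with an explicit computation in the infinite-path representation. For part~(\ref{item-alem:measurable}), write $m = d(\mu)$, $n = d(\nu)$, and let $E$ denote the set in question. The key observation is that $x = \mu y = \nu y$ for some $y$ forces $y = \sigma^m(x) = \sigma^n(x)$, so that
\[
E = \{x \in Z(\mu) \cap Z(\nu) : \sigma^m(x) = \sigma^n(x)\};
\]
this is Borel because $\sigma^m$ and $\sigma^n$ are continuous and the $Z(\lambda)$ are clopen. If $m - n \notin \Per\Lambda$, then $E \subseteq \{x : \sigma^m(x) = \sigma^n(x)\}$, which has measure zero by Proposition~\ref{prp:m sees Per}, so $M(E) = 0$. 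If $m - n \in \Per\Lambda$, then $\sigma^m(x) = \sigma^n(x)$ for \emph{every} $x \in \Lambda^\infty$, whence $E = Z(\mu) \cap Z(\nu)$; since Lemma~\ref{lem:per->bijection}(\ref{it:theta exists}) gives $Z(\mu) = Z(\theta_{m,n}(\mu))$ and distinct paths of degree $n$ have disjoint cylinders, this intersection equals $Z(\mu)$ when $\theta_{m,n}(\mu) = \nu$ and is empty otherwise (the empty case subsuming $s(\mu) \neq s(\nu)$, as $\theta_{m,n}$ preserves sources). This yields the stated value of $M(E)$.

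For part~(\ref{item-blem:measurable}), I would first treat the spanning elements $a = s_\mu s^*_\nu$. Using $\big(S_\mu S^*_\nu h_x \mid h_x\big) = \big(S^*_\nu h_x \mid S^*_\mu h_x\big)$ together with the formula $S^*_\lambda h_x = \delta_{\lambda, x(0,d(\lambda))}\, h_{\sigma^{d(\lambda)}(x)}$, a direct computation gives
\[
f_{s_\mu s^*_\nu}(x) = \delta_{\mu, x(0,m)}\,\delta_{\nu, x(0,n)}\,\delta_{\sigma^m(x),\, \sigma^n(x)},
\]
so that $f_{s_\mu s^*_\nu}$ is precisely the indicator $1_E$ of the Borel set $E$ from part~(\ref{item-alem:measurable}). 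Hence $f_{s_\mu s^*_\nu}$ is a simple integrable function with $\int_{\Lambda^\infty} f_{s_\mu s^*_\nu}\,dM = M(E)$, and by linearity $f_a$ is integrable for every $a$ in the dense $*$-subalgebra $\mathcal{A} := \lsp\{s_\mu s^*_\nu\}$.

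Finally I would pass to a general $a \in C^*(\Lambda)$ by approximation. Since $\pi_S$ is a representation and each $h_x$ is a unit vector, $|f_a(x)| = |(\pi_S(a) h_x \mid h_x)| \le \|\pi_S(a)\| \le \|a\|$ for every $x$, so $\|f_a\|_\infty \le \|a\|$ and $a \mapsto f_a$ is norm-contractive into $\ell^\infty(\Lambda^\infty)$. Choosing $a_j \in \mathcal{A}$ with $a_j \to a$ in norm, we get $\|f_{a_j} - f_a\|_\infty \le \|a_j - a\| \to 0$, so $f_a$ is a uniform limit of measurable functions and is therefore measurable; being bounded on the probability space $(\Lambda^\infty, M)$, it is integrable. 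The dominated convergence theorem gives $\int f_{a_j}\,dM \to \int f_a\,dM$, and since each $\big|\int f_{a_j}\,dM\big| \le \|f_{a_j}\|_\infty \le \|a_j\|$, we conclude $\big|\int_{\Lambda^\infty} f_a\,dM\big| \le \|a\|$.

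The density and dominated-convergence steps are routine, as is the integrability bookkeeping once measurability is in hand. The main obstacle is the bookkeeping in part~(\ref{item-alem:measurable}): correctly reducing the ``for some $y$'' condition to the periodicity relation $\sigma^m(x) = \sigma^n(x)$ and then to the dichotomy $\theta_{m,n}(\mu) = \nu$ versus disjoint cylinders, and matching this with the computation of $f_{s_\mu s^*_\nu}$. Once the identity $f_{s_\mu s^*_\nu} = 1_E$ is established, everything else follows from Proposition~\ref{prp:m sees Per}, Lemma~\ref{lem:per->bijection}, and standard measure theory.
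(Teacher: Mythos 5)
Your proof is correct and follows essentially the same route as the paper's: identify the set in question with $Z(\mu)\cap Z(\nu)\cap\{x:\sigma^{d(\mu)}(x)=\sigma^{d(\nu)}(x)\}$, apply Proposition~\ref{prp:m sees Per} and the identity $Z(\mu)=Z(\theta_{d(\mu),d(\nu)}(\mu))$ for part~(\ref{item-alem:measurable}), and recognise $f_{s_\mu s^*_\nu}$ as the indicator of that set before passing to general $a$ by approximation from the spanning elements. The only cosmetic difference is that you use uniform convergence $\|f_{a_j}-f_a\|_\infty\le\|a_j-a\|$ where the paper invokes pointwise convergence; both suffice.
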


\begin{proof}
For convenience, write $Z_{\mu,\nu}:=\{x \in \Lambda^\infty : x=\mu y=\nu y\text{\ for
some $y\in\Lambda^\infty$}\}$. Since $Z_{\mu,\nu}$ is closed it is measurable.

First  suppose  that $d(\mu) - d(\nu) \not\in \Per\Lambda$.  Then $M(Z_{\mu,\nu})\leq
M(\{x\in\Lambda^\infty: \sigma^{d(\mu)}(x)=\sigma^{d(\nu)}(x)\} = 0$ by
Proposition~\ref{prp:m sees Per}. Thus $M(Z_{\mu,\nu})=0$.

Second, suppose that $d(\mu) - d(\nu) \in \Per\Lambda$ and $\theta_{d(\mu), d(\nu)}(\mu)
\not= \nu$. Since $Z_{\mu,\nu}\subseteq Z(\mu)\cap Z(\nu)$, we deduce that
$Z_{\mu,\nu}=\emptyset$, and $M(Z_{\mu,\nu})=0$.

Third, suppose that $d(\mu) - d(\nu) \in \Per\Lambda$ and $\theta_{d(\mu), d(\nu)}(\mu) =
\nu$.  If $x\in Z(\mu)$, then $y = \sigma^{d(\mu)}(x)$ satisfies $x=\mu y$. So $\mu y=\nu
y$ by definition of $\theta_{d(\mu), d(\nu)}$, giving $x\in Z_{\mu, \nu}$. Thus
$Z_{\mu,\nu}= Z(\mu)$ and $M(Z_{\mu,\nu})=M(Z(\mu))$. This
gives~(\ref{item-alem:measurable}).

For~(\ref{item-blem:measurable}), observe that
\[
(\pi_S(s_\mu s^*_\nu) h_x \mid h_x)
    = (S^*_\nu h_x \mid S^*_\mu h_x)
    = \begin{cases}
        1 &\text{ if $x = \mu y = \nu y$ for some $y \in \Lambda^\infty$} \\
        0 &\text{ otherwise.}
    \end{cases}
\]
Hence $f_{s_\mu s^*_\nu}$ is the characteristic function of the measurable set
$Z_{\mu,\nu}$. Choose a sequence $a_n$ of finite linear combinations of the $s_\mu
s^*_\nu$ such that $a_n \to a$. Then each $f_{a_n}$ is a simple function. Continuity of
$\pi_S$ and of the inner product implies that $f_{a_n} \to f_a$ pointwise on
$\Lambda^\infty$. Thus $f_a$ is measurable. Finally,
\[
\textstyle\big|\int_{\Lambda^\infty} f_a(x)\,dM(x)\big|
    \le \int_{\Lambda^\infty} \big|(\pi_S(a) h_x \mid h_x)\big|\,dM(x)
    \le \int_{\Lambda^\infty} \|a\|\,dM(x)
    = \|a\|.\qedhere
\]
\end{proof}

\begin{prop}\label{prp:integral KMS}
Let $\Lambda$ be a strongly connected finite $k$-graph, and let $M$ be the measure on
$\Lambda^\infty$ obtained in Proposition~\ref{prop:measure e-u}. Let $\alpha$ be the
preferred dynamics on $C^*(\Lambda)$. Let $\pi_S$ be the infinite-path representation.
Then there is a KMS$_1$ state $\phi$ of $(C^*(\Lambda), \alpha)$ with formula
\begin{equation}\label{eq:phi(a)}\textstyle
    \phi(a) := \int_{\Lambda^\infty} \big(\pi_S(a) h_x \mid h_x\big)\,dM(x)\text{\ for  $a \in C^*(\Lambda)$.}
\end{equation}
\end{prop}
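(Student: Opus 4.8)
The plan is to verify first that $\phi$ is a state, and then to establish the KMS$_1$ condition by reducing it to a single modular identity for each generator $s_\lambda$, which I will prove by a change of variables exploiting the quasi-invariance of $M$ recorded in~\eqref{eq:cylinder measure}. That $\phi$ is a well-defined bounded linear functional with $|\phi(a)| \le \|a\|$ is exactly Lemma~\ref{lem:measurable}(\ref{item-blem:measurable}). Positivity is immediate, since $\big(\pi_S(a^*a) h_x \mid h_x\big) = \|\pi_S(a) h_x\|^2 \ge 0$, and $\phi(1) = 1$ because $\pi_S$ is unital (so that $\big(\pi_S(1) h_x \mid h_x\big) = 1$ for every $x$) while $M(\Lambda^\infty) = 1$. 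Hence $\phi$ is a state.

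For the KMS condition, note that $\alpha_i(s_\lambda) = \rho(\Lambda)^{-d(\lambda)} s_\lambda$, so the KMS$_1$ requirement for the generator $s_\lambda$ amounts to the identity
\[
\phi(s_\lambda b) = \rho(\Lambda)^{-d(\lambda)} \phi(b s_\lambda) \qquad \text{for all } b \in C^*(\Lambda). \quad (\dagger)
\]
I would prove $(\dagger)$ as follows. Using $S^*_\lambda h_x = 1_{Z(\lambda)}(x)\, h_{\sigma^{d(\lambda)}(x)}$, the diagonal coefficient unwinds to $\big(\pi_S(s_\lambda b) h_x \mid h_x\big) = 1_{Z(\lambda)}(x)\,\big(\pi_S(b) h_x \mid h_{\sigma^{d(\lambda)}(x)}\big)$, so that $\phi(s_\lambda b) = \int_{Z(\lambda)} \big(\pi_S(b) h_x \mid h_{\sigma^{d(\lambda)}(x)}\big)\, dM(x)$. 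The crux is that \eqref{eq:cylinder measure} gives $M(Z(\lambda\eta)) = \rho(\Lambda)^{-d(\lambda)} M(Z(\eta))$ for all $\eta \in s(\lambda)\Lambda$, which says precisely that the homeomorphism $y \mapsto \lambda y$ of $Z(s(\lambda))$ onto $Z(\lambda)$ pushes $M|_{Z(s(\lambda))}$ forward to $\rho(\Lambda)^{d(\lambda)} M|_{Z(\lambda)}$. Changing variables $x = \lambda y$ (so that $\sigma^{d(\lambda)}(x) = y$ and $h_{\lambda y} = S_\lambda h_y$) therefore yields $\phi(s_\lambda b) = \rho(\Lambda)^{-d(\lambda)} \int_{Z(s(\lambda))} \big(\pi_S(b) S_\lambda h_y \mid h_y\big)\, dM(y)$; since $S_\lambda h_y = 0$ off $Z(s(\lambda))$, the integral extends harmlessly to all of $\Lambda^\infty$ and equals $\rho(\Lambda)^{-d(\lambda)}\phi(b s_\lambda)$, giving $(\dagger)$.

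Finally I would upgrade $(\dagger)$ to the full KMS$_1$ condition. Applying $(\dagger)$ with $b$ replaced by $b^*$ and conjugating (using $\phi(c^*) = \overline{\phi(c)}$ and that $\rho(\Lambda)$ is real and positive) yields the companion relation $\phi(s^*_\lambda b) = \rho(\Lambda)^{d(\lambda)} \phi(b s^*_\lambda) = \phi\big(b\,\alpha_i(s^*_\lambda)\big)$. Thus every $s_\lambda$ and every $s^*_\lambda$ lies in the set of analytic elements $a$ satisfying $\phi(ab) = \phi\big(b\,\alpha_i(a)\big)$ for all $b$; a short computation shows this set is closed under products, so it contains the dense $*$-subalgebra $\lsp\{s_\mu s^*_\nu\}$, all of whose elements are analytic. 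By the standard criterion that it suffices to verify the modular identity on a set of analytic elements spanning a dense subspace, $\phi$ is KMS$_1$. The step requiring the most care—the main obstacle—is the measure-theoretic change of variables in $(\dagger)$: one must read off the quasi-invariance of $M$ from \eqref{eq:cylinder measure} and confirm that the off-diagonal integrand $\big(\pi_S(b) h_x \mid h_{\sigma^{d(\lambda)}(x)}\big)$ is measurable, which follows from Lemma~\ref{lem:measurable}(\ref{item-blem:measurable}) applied to $s_\lambda b$, since that integrand agrees with $f_{s_\lambda b}$ on $Z(\lambda)$.
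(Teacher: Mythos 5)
Your proof is correct, but it takes a genuinely different route from the paper's. The paper verifies the KMS condition directly on products of spanning elements: it expands $\phi(s_\mu s^*_\nu s_\eta s^*_\zeta)$, reduces to the case $d(\eta), d(\zeta) \ge d(\mu)\vee d(\nu)$, and then uses Lemma~\ref{lem:measurable}(\ref{item-alem:measurable}) together with the periodicity machinery ($\theta_{m,n}$, Corollary~\ref{cor:lmin<->theta}, Proposition~\ref{prp:per group}(\ref{it:theta ext})) to show that the conditions under which each side is nonzero match up and that the nonzero values agree. You instead prove the single intertwining relation $\phi(s_\lambda b) = \rho(\Lambda)^{-d(\lambda)}\phi(b s_\lambda)$ for every generator and every $b$, by reading the scaling $M(Z(\lambda\eta)) = \rho(\Lambda)^{-d(\lambda)} M(Z(\eta))$ as quasi-invariance of $M$ under $y \mapsto \lambda y$ and changing variables in the integral; you then take adjoints and use that the set of analytic elements satisfying the modular identity against all of $C^*(\Lambda)$ is a subalgebra, so it contains $\lsp\{s_\mu s^*_\nu\}$. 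This bypasses the $\Per\Lambda$ combinatorics and part~(\ref{item-alem:measurable}) of Lemma~\ref{lem:measurable} entirely (though both are still needed later, e.g.\ for Corollary~\ref{cor:many KMS}), and is essentially the Radon--Nikodym-cocycle viewpoint the paper only brings out in Section~\ref{sec:groupoids}; what the paper's computation buys in exchange is an explicit evaluation of $\phi$ on products of spanning elements with everything kept at the algebraic level. Two routine points you should make explicit if you write this up: the pushforward identity is verified only on cylinder sets and must be extended to all Borel sets by the usual uniqueness-of-measures argument, and the final step needs a citation to the standard criterion that the KMS condition may be checked on a dense $\alpha$-invariant $*$-subalgebra of analytic elements (e.g.\ \cite[Proposition~5.3.7]{BR}); neither is a gap.
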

\begin{proof}
Lemma~\ref{lem:measurable} implies that~\eqref{eq:phi(a)} defines a norm-decreasing map
$\phi : C^*(\Lambda) \to \CC$. This $\phi$ is  linear and positive. It is a state because
\[\textstyle
\phi(1) = \int_{\Lambda^\infty}\big(\pi_S(1) h_x \mid h_x\big)\,dM(x)
    = \int_{\Lambda^\infty} \|h_x\|^2\,dM(x) = 1.
\]
It remains to verify the KMS condition. Unfortunately
\cite[Proposition~3.1(b)]{aHLRSk-graphs} does not apply since the coordinates of
$\rho(\Lambda)$ may not be rationally independent; indeed KMS states may not be supported
on the diagonal subalgebra.  So we have to check the KMS condition from first principles.

Suppose that $s(\mu) = s(\nu)$ and $s(\eta) = s(\zeta)$. We must show that
\begin{equation}\label{eq:KMS to show}
\phi(s_\mu s^*_\nu s_\eta s^*_\zeta)
    = \rho(\Lambda)^{-(d(\mu) - d(\nu))} \phi(s_\eta s^*_\zeta s_\mu s^*_\nu).
\end{equation}

Suppose first that $d(\mu) - d(\nu) + d(\eta) - d(\zeta) \not\in \Per\Lambda$.
Applying~(CK4), we obtain
\[
s_\mu s^*_\nu s_\eta s^*_\zeta
    = \sum_{\xi \in s(\nu)\Lambda^{d(\eta)}} \sum_{\omega \in s(\eta) \Lambda^{d(\nu)}}
        s_{\mu\xi} s^*_{\nu\xi} s_{\eta\omega} s^*_{\zeta\omega}
    = \sum_{\nu\xi = \eta\omega \in \Lambda^{d(\nu) + d(\eta)}} s_{\mu\xi} s^*_{\zeta\omega}.
\]
Each $d(\mu\xi) - d(\zeta\omega) = d(\mu) - d(\nu) + d(\eta) - d(\zeta) \not\in
\Per\Lambda$, and so Lemma~\ref{lem:measurable} implies that $\phi(s_\mu s^*_\nu s_\eta
s^*_\zeta) = 0$. Symmetry gives $\phi(s_\eta s^*_\zeta s_\mu s^*_\nu) = 0$, so both sides
of~\eqref{eq:KMS to show} are zero.

Now suppose that  $d(\mu) - d(\nu) + d(\eta) - d(\zeta) \in \Per\Lambda$. Let $q = d(\mu)
\vee d(\nu)$. Then
\[
\phi(s_\mu s^*_\nu s_\eta s^*_\zeta)
    = \sum_{\kappa \in s(\eta)\Lambda^q} \phi(s_\mu s^*_\nu s_{\eta\kappa} s^*_{\zeta\kappa}),
\]
and
\[
\rho(\Lambda)^{-(d(\mu) - d(\nu))} \phi(s_\eta s^*_\zeta s_\mu s^*_\nu)
    = \sum_{\kappa \in s(\eta)\Lambda^q} \rho(\Lambda)^{-(d(\mu) - d(\nu))}
        \phi(s_{\eta\kappa} s^*_{\zeta\kappa} s_\mu s^*_\nu ).
\]
Thus it suffices to establish~\eqref{eq:KMS to show} under the additional hypothesis that
$d(\eta), d(\zeta) \ge d(\mu) \vee d(\nu)$. Then $d(\eta) \ge d(\nu)$, and we have
\begin{align*}
\phi(s_\mu s^*_\nu s_\eta s^*_\zeta)
    &=\begin{cases}
        \phi(s_{\mu\tau} s^*_\zeta) &\text{ if $\eta = \nu\tau$} \\
        0 &\text{ otherwise}
    \end{cases}\\
    &=\begin{cases}
        \int_{\Lambda^\infty} \big(S^*_\zeta h_x \mid S^*_{\mu\tau} h_x\big)\,dM(x) &\text{ if $\eta = \nu\tau$} \\
        0 &\text{ otherwise}
    \end{cases}\\
    &=\begin{cases}
        M\big(\{x \in \Lambda^\infty : x = \mu\tau y = \zeta y\text{ for some }y\}\big) &\text{ if $\eta = \nu\tau$} \\
        0 &\text{ otherwise.}
    \end{cases}
\end{align*}
If $\eta=\nu\tau$, then
\[
d(\mu\tau)-d(\zeta)=d(\mu)+d(\tau)-d(\nu)+d(\nu)-d(\zeta)
    = d(\mu) - d(\nu) + d(\eta) - d(\zeta)\in \Per\Lambda
\]
by assumption. Thus Lemma~\ref{lem:measurable} gives
\[
\phi(s_\mu s^*_\nu s_\eta s^*_\zeta)
    =\begin{cases}
        M(Z(\mu\tau) )&\text{if $\eta = \nu\tau$, $d(\mu\tau)-d(\zeta)\in\Per\Lambda$, $\theta_{d(\mu\tau), d(\zeta)}(\mu\tau)=\zeta$}\\
        0&\text{otherwise.}
    \end{cases}
\]
A similar argument gives
\[
\phi(s_\eta s^*_\zeta s_\mu s^*_\nu)
    =\begin{cases}
        M(Z(\eta) )&\text{if $\zeta = \mu\beta$, $d(\eta)-d(\nu\beta)\in\Per\Lambda$, $\theta_{d(\eta), d(\nu\beta)}(\eta)=\nu\beta$}\\
        0&\text{otherwise.}
    \end{cases}
\]

We check that the conditions appearing in the right-hand sides of these expressions for
$\phi(s_\mu s^*_\nu s_\eta s^*_\zeta)$ and $\phi(s_\eta s^*_\zeta s_\mu s^*_\nu)$ match
up. Suppose that the three conditions of the first expression hold:
\begin{equation}\label{eq:mnez cond}
    \eta = \nu\tau,\quad d(\mu\tau)-d(\zeta)\in\Per\Lambda\quad \text{ and }\quad\theta_{d(\mu\tau), d(\zeta)}(\mu\tau)=\zeta.
\end{equation}
Then $d(\tau) - (d(\zeta) - d(\mu)) \in \Per\Lambda$. Let $\beta := \theta_{d(\tau),
d(\zeta) - d(\mu)}(\tau)$ (this makes sense since $d(\zeta) \ge d(\mu)$).
Proposition~\ref{prp:per group}(\ref{it:theta ext}) shows that
\[
\zeta
    = \theta_{d(\mu\tau), d(\zeta)}(\mu\tau)
    = \theta_{d(\mu) + d(\tau), d(\mu) + (d(\zeta) - d(\mu))}(\mu\tau)
    = \mu\beta.
\]
We have
\[
d(\nu\beta) - d(\eta)
    = d(\nu\tau) - d(\tau) + d(\beta) - d(\eta) = d(\beta) - d(\tau)
    = d(\zeta) - d(\mu\tau)
    \in \Per\Lambda.
\]
Proposition~\ref{prp:per group}(\ref{it:theta ext}) then gives $\theta_{d(\nu\beta),
d(\eta)}(\nu\beta) = \nu\theta_{d(\beta), d(\tau)}(\beta)$, and by
Lemma~\ref{lem:per->bijection}(\ref{it:theta prp}) this is $\nu\theta^{-1}_{d(\tau),
d(\beta)}(\beta) = \nu\tau$, which equals  $\eta$ by assumption. Another application of
Lemma~\ref{lem:per->bijection}(\ref{it:theta prp}) yields $\nu\beta = \theta_{d(\eta),
d(\nu\beta)}(\eta)$. So the three conditions of the second expression hold:
\begin{equation}\label{eq:ezmn cond}
    \zeta = \mu\beta,\quad d(\eta)-d(\nu\beta)\in\Per\Lambda\quad \text{ and }\quad\theta_{d(\eta), d(\nu\beta)}(\eta)=\nu\beta.
\end{equation}
A symmetric argument shows that \eqref{eq:ezmn cond} implies \eqref{eq:mnez cond}.

To establish~\eqref{eq:KMS to show}, first suppose that~\eqref{eq:mnez cond} fails. Then
so does~\eqref{eq:ezmn cond}, and both sides of~\eqref{eq:KMS to show} are zero. Now
suppose that~\eqref{eq:mnez cond} holds. Then so does~\eqref{eq:ezmn cond}, and so
\begin{flalign*}
&&\phi(s_\mu s^*_\nu s_\eta s^*_\zeta)
    &= M(Z(\mu\tau))
    = \rho(\Lambda)^{-d(\mu\tau)} M(Z(s(\tau))) &\\
    &&&= \rho(\Lambda)^{d(\eta) - d(\mu\tau)} M(Z(\eta))
    = \rho(\Lambda)^{-(d(\mu) - d(\nu))} \phi(s_\eta s^*_\zeta s_\mu s^*_\nu).&\qedhere
\end{flalign*}
\end{proof}

Since the KMS$_1$ state $\phi$ of Proposition~\ref{prp:integral KMS} may not be supported
on $\clsp\{s_\lambda s^*_\lambda\}$ we can now perturb by gauge automorphisms $\gamma_z$
to obtain new KMS$_1$ states.

\begin{cor}\label{cor:many KMS}
Suppose $\Lambda$ is a strongly connected finite $k$-graph. Let $x^\Lambda$ be the
unimodular Perron-Frobenius eigenvector of $\Lambda$. Let $\alpha$ be the preferred
dynamics on $C^*(\Lambda)$. For each $z \in \TT^k$ there is a KMS$_1$ state $\phi_z$ of
$(C^*(\Lambda), \alpha)$ satisfying
\begin{equation}\label{eq:phiz formula}
\phi_z(s_\mu s^*_\nu)
    = \begin{cases}
        \rho(\Lambda)^{-d(\mu)} z^{d(\mu) - d(\nu)} x^\Lambda_{s(\mu)}
            &\text{ if $d(\mu) - d(\nu) \in \Per\Lambda$}\\
            &\qquad\text{ and $\theta_{d(\mu), d(\nu)}(\mu) =  \nu$} \\
        0   &\text{ otherwise.}
    \end{cases}
\end{equation}
\end{cor}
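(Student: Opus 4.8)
The plan is to produce every $\phi_z$ by composing the single KMS$_1$ state $\phi$ of Proposition~\ref{prp:integral KMS} with the gauge automorphism $\gamma_z$, and then to read off the formula \eqref{eq:phiz formula} from the measure computations already carried out in Lemma~\ref{lem:measurable}. So I would set $\phi_z := \phi \circ \gamma_z$. Since $\gamma_z$ is a $*$-automorphism and $\phi$ is a state, $\phi_z$ is automatically a state, and the only substantive point about $\phi_z$ being KMS$_1$ is that the gauge automorphism preserves the KMS condition for $\alpha$. This holds because $\alpha$ itself takes values in the gauge group: $\alpha_t = \gamma_{\rho(\Lambda)^{it}}$ and $\TT^k$ is abelian, so $\gamma_z \circ \alpha_t = \alpha_t \circ \gamma_z$ for all $t$. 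For any $\alpha$-analytic $a$, this commutation shows that $\gamma_z(a)$ is again $\alpha$-analytic with $\alpha_i(\gamma_z(a)) = \gamma_z(\alpha_i(a))$, and feeding this into the KMS identity $\phi(\gamma_z(a)\gamma_z(b)) = \phi(\gamma_z(b)\,\alpha_i(\gamma_z(a)))$ gives $\phi_z(ab) = \phi_z(b\,\alpha_i(a))$. Hence each $\phi_z$ is a KMS$_1$ state of $(C^*(\Lambda),\alpha)$.

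Next I would compute $\phi$ itself on the spanning elements $s_\mu s^*_\nu$ directly from its integral formula \eqref{eq:phi(a)}. In the proof of Lemma~\ref{lem:measurable}(\ref{item-blem:measurable}) it is shown that $x \mapsto \big(\pi_S(s_\mu s^*_\nu)h_x \mid h_x\big)$ is the indicator function of the set $Z_{\mu,\nu} = \{x \in \Lambda^\infty : x = \mu y = \nu y \text{ for some } y\}$, so \eqref{eq:phi(a)} gives $\phi(s_\mu s^*_\nu) = M(Z_{\mu,\nu})$. Part~(\ref{item-alem:measurable}) of the same lemma evaluates this: $M(Z_{\mu,\nu}) = M(Z(\mu))$ when $d(\mu) - d(\nu) \in \Per\Lambda$ and $\theta_{d(\mu),d(\nu)}(\mu) = \nu$, and $M(Z_{\mu,\nu}) = 0$ otherwise. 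Combining with the cylinder formula \eqref{eq:cylinder measure} gives $M(Z(\mu)) = \rho(\Lambda)^{-d(\mu)} x^\Lambda_{s(\mu)}$. Thus $\phi$ already satisfies \eqref{eq:phiz formula} in the case $z = 1$; equivalently $\phi(U_g) = 1$ for all $g \in \Per\Lambda$, so $\phi$ is the state attached to the trivial character through Theorem~\ref{thm:CK states}.

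Finally I would transport this through the gauge automorphism. Because $\gamma_z(s_\lambda) = z^{d(\lambda)} s_\lambda$ and $z \in \TT^k$, we have $\gamma_z(s_\mu s^*_\nu) = z^{d(\mu) - d(\nu)} s_\mu s^*_\nu$, whence $\phi_z(s_\mu s^*_\nu) = z^{d(\mu)-d(\nu)} \phi(s_\mu s^*_\nu)$. Substituting the value of $\phi(s_\mu s^*_\nu)$ computed above — and noting the scalar $z^{d(\mu)-d(\nu)}$ is harmless when $\phi(s_\mu s^*_\nu) = 0$ — yields exactly \eqref{eq:phiz formula}. I do not anticipate a genuine obstacle here: all the analytic content is already contained in Proposition~\ref{prp:integral KMS} and Lemma~\ref{lem:measurable}, and the two points needing a little care are the verification that $\gamma_z$ commutes with $\alpha$ (so that the KMS property is inherited) and the bookkeeping of the exponent $d(\mu)-d(\nu)$ in the gauge twist.
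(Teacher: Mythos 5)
Your proposal is correct and follows essentially the same route as the paper: define $\phi_z = \phi\circ\gamma_z$, verify the KMS condition using that the gauge automorphisms commute with the preferred dynamics (the paper does this by a direct computation on spanning elements, you do it slightly more abstractly, but the content is identical), and then read off \eqref{eq:phiz formula} from the integral formula \eqref{eq:phi(a)}, Lemma~\ref{lem:measurable} and \eqref{eq:cylinder measure}. No gaps.
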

\begin{proof}
Let $\phi$ be the KMS$_1$ state of Proposition~\ref{prp:integral KMS}. Let  $z\in\TT^k$
and $\phi_z = \phi \circ \gamma_z$. Then $\phi_z$ is a state. Using the KMS condition for
$\phi$, we calculate:
\begin{align*}
\phi_z(s_\mu s^*_\nu s_\eta s^*_\zeta)
    &= z^{d(\mu) - d(\nu) + d(\eta) - d(\zeta)} \phi(s_\mu s^*_\nu s_\eta s^*_\zeta) \\
    &= z^{d(\mu) - d(\nu) + d(\eta) - d(\zeta)} \rho(\Lambda)^{-(d(\mu) - d(\nu))} \phi(s_\eta s^*_\zeta s_\mu s^*_\nu) \\
    &= \rho(\Lambda)^{-(d(\mu) - d(\nu))} \phi_z(s_\eta s^*_\zeta s_\mu s^*_\nu).
\end{align*}
Hence $\phi_z$ is a KMS$_1$ state of $(C^*(\Lambda), \alpha)$.

Let $\mu,\nu \in \Lambda$  and let $M$ be the measure on $\Lambda^\infty$ obtained in
Proposition~\ref{prop:measure e-u}. The formula for $\phi$ in
Proposition~\ref{prp:integral KMS} gives
\begin{align*}
\phi_z(s_\mu s^*_\nu)
    &=\textstyle \int_{\Lambda^\infty} \big(\pi_S(\gamma_z(s_\mu s^*_\nu)) h_x \mid h_x \big)\,dM(x)
    = z^{d(\mu) - d(\nu)} \int_{\Lambda^\infty} \big(\pi_S(s_\mu s^*_\nu) h_x \mid h_x \big)\,dM(x).\\
\intertext{
By Lemma~\ref{lem:measurable}, this is}
&= \begin{cases}
        z^{d(\mu) - d(\nu)} M(Z(\mu)) &\text{ if $d(\mu) - d(\nu) \in \Per\Lambda$ and $\theta_{d(\mu), d(\nu)}(\mu) = \nu$}\\
        0 &\text{ otherwise.}
    \end{cases}
\end{align*}
Now~\eqref{eq:phiz formula} follows from~\eqref{eq:cylinder measure}.
\end{proof}

\begin{proof}[Proof of Theorem~\ref{thm:main}]
It is clear that $\phi \mapsto \phi \circ \pi_U$ is continuous and affine. To see that it
is injective, suppose that $\phi$ and $\phi'$ are KMS states of $C^*(\Lambda)$ such that
$\phi \circ \pi_U = \phi' \circ \pi_U$. Then the formula~\eqref{eq:CK phi formula}
implies that $\phi(s_\mu s^*_\nu) = \phi'(s_\mu s^*_\nu)$ for all $\mu,\nu$, and so $\phi
= \phi'$.

To prove that $\pi^*_U$ is surjective, we first show that every pure state of
$C^*(\Per\Lambda)$ belongs to the range of $\pi_U^*$. Fix a pure state $\chi$ of
$C^*(\Per\Lambda)$. Since $C^*(\Per\Lambda)$ is commutative, $\chi$ is a 1-dimensional
representation and hence determines a character, also denoted $\chi$, of $\Per\Lambda$.
Choose $z \in \TT^k$ such that $z^m = \chi(m)$ for all $m \in \Per\Lambda$. Let $\phi_z$
be the KMS$_1$ state of Corollary~\ref{cor:many KMS}. Let $i_{\Per\Lambda} : \Per\Lambda
\to C^*(\Per\Lambda)$ be the universal unitary representation. For $m-n \in \Per\Lambda$,
we have
\[
\phi_z \circ \pi_U(i_{\Per\Lambda}(m-n))
    = \phi_z(U_{m-n})
    = \sum_{\mu \in \Lambda^m} \phi_z(s_\mu s^*_{\theta_{m,n}(\mu)}).
\]
Applying the formula for $\phi_z$ from~\eqref{eq:phiz formula} to each term gives
\begin{align*}
\phi_z \circ \pi_U(i_{\Per\Lambda}(m-n))
&=\sum_{\mu \in \Lambda^m} \rho(\Lambda)^{-m}z^{m-n} x^\Lambda_{s(\mu)}
=\rho(\Lambda)^{-m}\chi(m-n)\sum_{\mu \in \Lambda^m}x^\Lambda_{s(\mu)}\\
&=\rho(\Lambda)^{-m}\chi(m-n)\sum_{v,w \in \Lambda^0}A^m(v,w)x^\Lambda_{w}\\
&=\rho(\Lambda)^{-m}\chi(m-n)\sum_{v \in \Lambda^0}(A^mx^\Lambda)_{v}\\
&=\rho(\Lambda)^{-m}\chi(m-n)\sum_{v \in \Lambda^0}\rho(\Lambda)^m x^\Lambda_v
= \chi(m-n).
\end{align*}
Hence $\chi = \phi_z \circ \pi_U = \pi_U^*(\phi_z)$.

Since $\pi_U^*$ is affine, every convex combination of pure states of $C^*(\Per\Lambda)$
is in the range of $\pi_U^*$. Now fix a state $\psi$ of $C^*(\Per\Lambda)$. The
Krein-Milman theorem implies that there is a sequence $(\psi_n)$ of convex combinations
of pure states of $C^*(\Per\Lambda)$ such that $\psi_n \to \psi$. Each $\psi_n$ is in the
range of $\pi_U^*$, so it suffices to show that the range of $\pi_U^*$ is closed. The
KMS$_1$ simplex of $(C^*(\Lambda), \alpha)$ is compact \cite[Theorem~5.3.30(1)]{BR}, and
so its image under the continuous map $\pi_U^*$ is also compact. Since the state space of
$C^*(\Per\Lambda)$ is Hausdorff, we deduce that the image of $\pi_U^*$ is closed.
\end{proof}

\begin{rmk}\label{rmk:inverse}
Theorem~\ref{thm:CK states} shows how to describe the inverse of $\pi_U^*$. Let $\psi$ be
a state of $C^*(\Per\Lambda)$. Then $\phi := (\pi^*_U)^{-1}(\psi)$ satisfies $\phi \circ
\pi_U = \psi$ and so $\phi(U_{m-n}) = \phi \circ \pi_U(i_{\Per\Lambda}(m-n)) =
\psi(i_{\Per\Lambda}(m-n))$. So~\eqref{eq:CK phi formula} shows that
\[
\phi(s_\mu s^*_\nu)
    = \begin{cases}
        \rho(\Lambda)^{-d(\mu)} \psi\big(i_{\Per\Lambda}(d(\mu) - d(\nu))\big) x^\Lambda_{s(\mu)}
            &\text{ if $d(\mu) - d(\nu) \in \Per\Lambda$}\\
            &\text{\qquad and $\theta_{d(\mu), d(\nu)}(\mu) =  \nu$} \\
        0   &\text{ otherwise.}
    \end{cases}
\]
\end{rmk}

\section{Consequences of our main theorem}\label{sec:consequences}

\subsection*{A question of Yang}

In \cite{Yang:IJM10,Yang-preprint2013}, Yang studies a particular state $\omega$ on the
$C^*$-algebra of a finite $k$-graph with one vertex. She asks whether this $\omega$ is a
factor state if and only if $\Lambda$ is aperiodic. We will use the following theorem to
give an affirmative answer for a much broader class of $k$-graphs. We explain precisely
how our theorem relates to Yang's conjecture in Remark~\ref{rmk:YangConnection}.

Given a state $\phi$ of a $C^*$-algebra $A$, we write $\pi_\phi$ for the associated GNS
representation of $A$. Recall that $\phi$ is a factor state if the double-commutant
$\pi_\phi(A)''$ is a factor.

\begin{thm}\label{thm:YangConnection}
Suppose that $\Lambda$ is a strongly connected finite $k$-graph. Let $\alpha$ be the
preferred dynamics on $C^*(\Lambda)$, and let $x^\Lambda$ be the unimodular
Perron-Frobenius eigenvector of $\Lambda$ (see Definition~\ref{PF-vector}). Let $\gamma$
denote the gauge action of $\TT^k$ on $C^*(\Lambda)$. There is a KMS$_1$ state $\omega$
of $(C^*(\Lambda), \alpha)$ such that
\begin{equation}\label{eq:omegadef}
\omega(s_\mu s^*_\nu)
    = \delta_{\mu,\nu} \rho(\Lambda)^{-d(\mu)} x^\Lambda_{s(\mu)}\quad\text{ for all $\mu,\nu$.}
\end{equation}
This $\omega$ is the unique $\gamma$-invariant KMS state of $(C^*(\Lambda), \alpha)$, and
restricts to a trace on the fixed-point algebra $C^*(\Lambda)^\gamma$. The following are
equivalent:
\begin{enumerate}
\item\label{it:aper} $\Lambda$ is aperiodic;
\item\label{it:simple} $C^*(\Lambda)$ is simple;
\item\label{it:factor} $\omega$ is a factor state;
\item\label{it:onlyKMS} $\omega$ is the only KMS$_1$ state of $(C^*(\Lambda),
    \alpha)$.
\end{enumerate}
\end{thm}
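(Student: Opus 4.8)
The statement bundles together the construction of a distinguished KMS state $\omega$, a uniqueness/trace assertion, and a four-way equivalence. I would organize the proof as: first build $\omega$ and verify its properties, then prove the cycle of equivalences, choosing an order that lets me reuse Theorem~\ref{thm:main} as the main engine.

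\medskip

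\emph{Construction of $\omega$ and its basic properties.}
The state $\omega$ is exactly $\phi_z$ of Corollary~\ref{cor:many KMS} with $z=(1,\dots,1)$, or equivalently the integrated vector state $\phi_1$ of Proposition~\ref{prp:integral KMS} restricted so that only the $d(\mu)-d(\nu)\in\Per\Lambda$ terms with $\theta_{d(\mu),d(\nu)}(\mu)=\nu$ survive; the claimed formula~\eqref{eq:omegadef} is the special case of \eqref{eq:phiz formula} in which we additionally require $\mu=\nu$. To see that $\omega$ is $\gamma$-invariant I would check $\omega\circ\gamma_w=\omega$ directly from \eqref{eq:omegadef}, since $\gamma_w(s_\mu s^*_\nu)=w^{d(\mu)-d(\nu)}s_\mu s^*_\nu$ and the only surviving terms have $d(\mu)=d(\nu)$. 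For uniqueness of the $\gamma$-invariant KMS state: if $\phi$ is $\gamma$-invariant then $\phi(s_\mu s^*_\nu)=0$ unless $d(\mu)=d(\nu)$, and combined with the general formula~\eqref{eq:CK phi formula} from Theorem~\ref{thm:CK states} this forces $\phi$ to agree with \eqref{eq:omegadef}, since $\gamma$-invariance kills $\phi(U_{m-n})$ for $m\neq n$ (averaging $U_{m-n}$ over the gauge action gives $\delta_{m,n}$). That $\omega$ restricts to a trace on $C^*(\Lambda)^\gamma$ follows because on the fixed-point algebra the preferred dynamics is implemented by scalars $\rho(\Lambda)^{d(\mu)-d(\nu)}=1$ when $d(\mu)=d(\nu)$, so the KMS condition collapses to the trace identity.

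\medskip

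\emph{The equivalences.}
I would prove \eqref{it:aper}$\Leftrightarrow$\eqref{it:onlyKMS} using Theorem~\ref{thm:main} plus Proposition~\ref{prp:Per<->per}: the KMS simplex is affinely isomorphic to the state space of $C^*(\Per\Lambda)$, which is a single point exactly when $\Per\Lambda=\{0\}$, i.e.\ exactly when $\Lambda$ is aperiodic; and when $\Per\Lambda=\{0\}$ the unique state is forced to be $\omega$. For \eqref{it:aper}$\Leftrightarrow$\eqref{it:simple} I would invoke the known characterization of simplicity of $k$-graph $C^*$-algebras: for a strongly connected (hence cofinal, by Lemma~\ref{lem-nosources}) finite $k$-graph, $C^*(\Lambda)$ is simple if and only if $\Lambda$ is aperiodic, citing \cite{RobertsonSims:BLMS07}. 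The genuinely new link is \eqref{it:aper}$\Leftrightarrow$\eqref{it:factor}. For the easy direction, if $\Lambda$ is aperiodic then $C^*(\Lambda)$ is simple, so the GNS representation of any faithful state is faithful and its weak closure is a factor (a simple $C^*$-algebra has a faithful factor representation via any KMS state, since $\omega$ is faithful here). For the converse I would argue contrapositively: if $\Lambda$ is periodic, then $\Per\Lambda\neq\{0\}$, and I would exhibit a central element in $\pi_\omega(C^*(\Lambda))''$ that is not scalar, using the central unitaries $U_g=\pi_U(i_{\Per\Lambda}(g))$ of Proposition~\ref{prp:unitary group}: these lie in the center of $C^*(\Lambda)$, hence their images lie in the center of the von Neumann algebra, and I must show $\pi_\omega(U_g)$ is not a scalar multiple of the identity for some $g\neq 0$.

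\medskip

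\emph{Anticipated main obstacle.}
The hard step is showing that $\pi_\omega(U_g)$ is non-scalar when $g\neq 0$, i.e.\ that the periodicity genuinely obstructs the factor property. The difficulty is that $\omega$ is $\gamma$-invariant, so $\omega(U_g)=0$ for $g\neq0$ by the uniqueness computation above; a central element with expectation zero could a priori still be scalar only if that scalar were zero, which is impossible for a unitary, so in fact $\omega(U_g)=0$ together with $U_g$ unitary central immediately shows $\pi_\omega(U_g)$ is a nonzero central element of trace zero in the factor $\pi_\omega(C^*(\Lambda))''$ equipped with the tracial vector state --- and a factor admits no nonzero central element orthogonal to the identity. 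Thus the key technical point is to verify that the GNS vector is tracial on the relevant subalgebra (which follows from the trace property of $\omega$ on $C^*(\Lambda)^\gamma$ and the fact that $U_g\in C^*(\Lambda)^\gamma$) so that $\langle \pi_\omega(U_g)\xi_\omega,\xi_\omega\rangle=\omega(U_g)=0$ forces non-scalarity; assembling this cleanly, and confirming $U_g\in C^*(\Lambda)^\gamma$, is where the real work lies.
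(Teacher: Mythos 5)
Your overall architecture matches the paper's, and your argument that periodicity obstructs the factor property is a valid (and arguably more self-contained) alternative to the paper's route: the paper instead invokes \cite[Theorem~5.3.30(3)]{BR} (a KMS state is a factor state iff it is extreme in the KMS simplex) together with the observation that the Haar trace on $C^*(\Per\Lambda)$ is pure iff $\Per\Lambda=\{0\}$. Your direct argument --- $U_g$ is a central unitary, $\omega(U_g)=\langle\pi_\omega(U_g)\xi_\omega,\xi_\omega\rangle=0$ for $g\neq 0$ by gauge-invariance, and a scalar unitary cannot have vanishing vector state --- is correct as it stands; you do not even need the traciality you worry about at the end, since $\omega(a)=\langle\pi_\omega(a)\xi_\omega,\xi_\omega\rangle$ holds for \emph{every} $a$ by the GNS construction, and centrality of $\pi_\omega(U_g)$ in $\pi_\omega(C^*(\Lambda))''$ is automatic from centrality of $U_g$ in $C^*(\Lambda)$.

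However, there are two genuine gaps. First, the construction of $\omega$: the state $\phi_z$ with $z=(1,\dots,1)$ is \emph{not} $\omega$. By \eqref{eq:phiz formula}, $\phi_1(s_\mu s^*_{\theta_{m,n}(\mu)})=\rho(\Lambda)^{-m}x^\Lambda_{s(\mu)}\neq 0$ whenever $m-n\in\Per\Lambda$, including $m-n\neq 0$, where $\theta_{m,n}(\mu)\neq\mu$ because the degrees differ; so $\phi_1$ violates \eqref{eq:omegadef} as soon as $\Lambda$ is periodic (indeed $\phi_1(U_g)=1$ for all $g\in\Per\Lambda$, whereas $\omega(U_g)=\delta_{g,0}$). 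Saying that $\omega$ is $\phi_1$ ``restricted so that only the $\mu=\nu$ terms survive'' is not a construction of a state --- the existence of a KMS state satisfying \eqref{eq:omegadef} is exactly the nontrivial point. The paper obtains it by applying the inverse of $\pi_U^*$ (Remark~\ref{rmk:inverse}, i.e.\ the surjectivity half of Theorem~\ref{thm:main}) to the Haar trace $\Tr$ on $C^*(\Per\Lambda)$; alternatively you could average $\phi_z$ over $z\in\TT^k$ against Haar measure, which kills precisely the terms with $d(\mu)\neq d(\nu)$. Second, your easy direction of (\ref{it:aper})$\Rightarrow$(\ref{it:factor}) rests on the claim that a simple $C^*$-algebra has factor GNS representations for its KMS states; simplicity gives faithfulness of $\pi_\omega$ but says nothing about the centre of the weak closure (a simple $C^*$-algebra with two distinct tracial states has a non-factor GNS representation for their average). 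The correct reason is that once (\ref{it:aper})$\Leftrightarrow$(\ref{it:onlyKMS}) is known, uniqueness makes $\omega$ extreme in the KMS$_1$ simplex, and extreme KMS states are factor states by \cite[Theorem~5.3.30(3)]{BR}.
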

\begin{proof}
Let $\Tr$ be the trace on $C^*(\Per\Lambda)$ corresponding to Haar measure on
$(\Per\Lambda)\widehat{\;}$. Then $\Tr(i_{\Per\Lambda}(g)) = \delta_{g,0}$ for $g \in
\Per\Lambda$. Remark~\ref{rmk:inverse} shows that there is a KMS$_1$ state of
$(C^*(\Lambda), \alpha)$ satisfying
\[
\omega(s_\mu s^*_\nu)
    = \begin{cases}
        \rho(\Lambda)^{-d(\mu)} \Tr\big(i_{\Per\Lambda}(d(\mu) - d(\nu))\big) x^\Lambda_{s(\mu)}
            &\text{ if $d(\mu) - d(\nu) \in \Per\Lambda$}\\
            &\text{\qquad and $\theta_{d(\mu), d(\nu)}(\mu) =  \nu$} \\
        0   &\text{ otherwise,}
    \end{cases}
\]
and that $\omega \circ \pi_U = \Tr$. Lemma~\ref{lem:per->bijection}(\ref{it:theta prp})
shows that $\theta_{m,m} = \id_{\Lambda^m}$ for each $m \in \NN^k$, so $\omega$
satisfies~\eqref{eq:omegadef}.

The formula~\eqref{eq:omegadef} shows that $\omega(\gamma_z(s_\mu s^*_\nu)) =
\omega(s_\mu s^*_\nu)$ for all $\mu,\nu$, and so $\omega$ is gauge-invariant. For
uniqueness, suppose that $\omega'$ is a gauge-invariant KMS$_1$ state of $C^*(\Lambda,
\alpha)$. For $m,n \in \NN^k$ with $m - n \in \Per\Lambda$, and for $z \in \TT^k$, we
have
\[
\omega'(U_{m-n})
    = \omega'(\gamma_z(U_{m-n}))
    = \omega'\Big(\sum_{\mu \in \Lambda^m} \gamma_z(s_\mu s^*_{\theta_{m,n}(\mu)})\Big)
    = z^{m-n} \omega'(U_{m-n}).
\]
So if $\omega'(U_{m-n}) \not=0$ then $z^{m-n} = 1$ for all $z \in \TT^k$, forcing $m-n =
0$. Hence $\omega' \circ \pi_U = \Tr = \omega \circ \pi_U$, and Theorem~\ref{thm:main}
implies that $\omega' = \omega$.

Since the dynamics $\alpha$ is a subgroup of the gauge action, every element of
$C^*(\Lambda)^\gamma$ is fixed by $\alpha$. In particular, every element of
$C^*(\Lambda)^\gamma$ is analytic, and the KMS condition implies that each $\omega(ab) =
\omega(b\alpha_{i}(a)) = \omega(ba)$, so $\omega$ is a trace on $C^*(\Lambda)^\gamma$.

It remains to establish that the conditions (\ref{it:aper})--(\ref{it:onlyKMS}) are
equivalent. We will prove \mbox{(\ref{it:aper})${}\iff{}$(\ref{it:simple})}, then
\mbox{(\ref{it:aper})${}\iff{}$(\ref{it:onlyKMS})}, and then
\mbox{(\ref{it:factor})${}\iff{}$(\ref{it:onlyKMS})}.

For \mbox{(\ref{it:aper})${}\iff{}$(\ref{it:simple})}, observe that since $\Lambda$ is
strongly connected it is cofinal (see \cite[Definition~4.7]{KP}). So combining
\cite[Theorem~3.1]{RobertsonSims:BLMS07} and $\mbox{(iii)}\iff\mbox{(i)}$ of
\cite[Lemma~3.2]{RobertsonSims:BLMS07} shows that $C^*(\Lambda)$ is simple if and only if
$\Lambda$ is aperiodic.

For \mbox{(\ref{it:aper})${}\implies{}$(\ref{it:onlyKMS})}, observe that since $\Lambda$
is aperiodic, we have $\Per\Lambda = \{0\}$ by Proposition~\ref{prp:Per<->per}. So $\Tr$
is the unique state of $C^*(\Per\Lambda)$, and Theorem~\ref{thm:main} implies that
$\omega$ is the unique KMS$_1$ state of $(C^*(\Lambda), \alpha)$. For
\mbox{(\ref{it:onlyKMS})${}\implies{}$(\ref{it:aper})}, observe that if $\omega$ is the
only KMS$_1$ state of $(C^*(\Lambda), \alpha)$, then Theorem~\ref{thm:main} shows that
$\Tr := \omega \circ \pi_U$ is the only state of $C^*(\Per\Lambda)$ and hence
$\Per\Lambda = \{0\}$. So Proposition~\ref{prp:Per<->per} implies that $\Lambda$ is
aperiodic.

For \mbox{(\ref{it:factor})${}\iff{}$(\ref{it:onlyKMS})}, first recall that the pure
states of $C^*(\Per\Lambda)$ are the states obtained from integration against point-mass
measures on $(\Per\Lambda)\widehat{\;}$. Since $\Tr$ is obtained from integration against
Haar measure, we deduce that $\Tr$ is a pure state if and only if it is the only state of
$C^*(\Per\Lambda)$. So Theorem~\ref{thm:main} shows that $\omega$ is an extreme point of
the KMS$_1$ simplex of $C^*(\Lambda, \alpha)$ if and only if it is the unique KMS$_1$
state. Theorem~5.3.30(3) of~\cite{BR} implies that a KMS$_1$ state is a factor state if
and only if it is an extreme KMS$_1$ state, giving
\mbox{(\ref{it:factor})${}\iff{}$(\ref{it:onlyKMS})}.
\end{proof}

We now discuss how this result relates to Yang's work.

\begin{rmk}\label{rmk:YangConnection}
Let $\Lambda$ be a row-finite $k$-graph with one vertex. Then \cite[Lemma~3.2]{KP}
implies that $C^*(\Lambda)^\gamma$ is a UHF algebra, and so has a unique trace $\tau$.
Let $\Phi : C^*(\Lambda) \to C^*(\Lambda)^\gamma$ be the conditional expectation obtained
from averaging over $\gamma$ as on page~6 of~\cite{KP}. In \cite{Yang-preprint2013} (see
also \cite{Yang:IJM10, Yang:xx11}), Yang studies the state $\tau \circ \Phi$.

We claim that the gauge-invariant KMS$_1$ state $\omega$ described in
Theorem~\ref{thm:YangConnection} is equal to $\tau \circ \Phi$. To see this, observe that
the formula~\eqref{eq:omegadef} shows that $\omega = \omega|_{C^*(\Lambda)^\gamma} \circ
\Phi$. Theorem~\ref{thm:YangConnection} implies that $\omega|_{C^*(\Lambda)^\gamma}$ is a
trace. Since $\tau$ is the unique trace on $C^*(\Lambda)^\gamma$, we deduce that
$\omega|_{C^*(\Lambda)^\gamma} = \tau$ and hence that $\omega = \tau \circ \Phi$.

Since $\Lambda$ has one vertex, each $A_i$ is the $1 \times 1$ matrix
$(|\Lambda^{e_i}|)$. So $\rho(\Lambda)$ is the vector, denoted $m$ in
\cite{Yang-preprint2013}, with entries $|\Lambda^{e_i}|$. So Yang's formula
\cite[Equation~(2)]{Yang-preprint2013} for the modular automorphism group $\sigma$ of the
extension of $\omega$ to $\pi_\omega(C^*(\Lambda))''$ shows that $\sigma$ agrees with the
preferred dynamics $\alpha$ on $C^*(\Lambda)$. Consequently, restricting
Theorem~\ref{thm:YangConnection} to $k$-graphs with one vertex improves
\cite[Theorem~5.3]{Yang-preprint2013} by proving its first assertion without the
hypothesis that $\{n \in \ZZ^k : \rho(\Lambda)^n = 1\}$ has rank at most $1$. This
confirms, for strongly-connected $k$-graphs, the first part of the conjecture stated for
single-vertex $2$-graphs in \cite[Remark~5.5]{Yang:xx11}.
\end{rmk}

\subsection*{The phase change for the preferred dynamics on the Toeplitz algebra}

For KMS states for the gauge actions on the Toeplitz algebras of finite graphs
\cite{aHLRS1-graphs, aHLRS1-graphs2}, the phase-changes that occur with decreasing
inverse temperature are from larger to smaller KMS simplices. Here we show that for many
$k$-graphs, there is a phase change of a very different nature at the critical
temperature for the preferred dynamics. In general, all sorts of phase changes can happen
as inverse temperatures approach a critical one (see, for example, \cite{BEK}), but this
is the first time we have seen this phenomenon for graph algebras. Recall that a
$k$-graph is \emph{periodic} if it is not aperiodic.

\begin{cor}\label{cor:phasechange}
Suppose that $\Lambda$ is a strongly connected finite $k$-graph and that $\rho(\Lambda)_i
> 1$ for all $i$. Denote by $\alpha$ the preferred dynamics on $\Tt C^*(\Lambda)$. For $\beta \in \RR$, let
$E_\beta$ be the set of extreme points of the KMS$_\beta$ simplex of $(\Tt C^*(\Lambda),
\alpha)$. Then
\[
|E_\beta|
    = \begin{cases}
        |\Lambda^0| &\text{ if $\beta>1$}\\
        \infty &\text{ if $\beta=1$ and $\Lambda$ is periodic}\\
        1&\text{ if $\beta = 1$ and $\Lambda$ is aperiodic.}\\
        0 &\text{ if $\beta < 1$.}
\end{cases}
\]
\end{cor}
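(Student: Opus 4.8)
The plan is to treat the four regimes of $\beta$ separately, exploiting that the preferred dynamics has $r=\ln\rho(\Lambda)$ with every coordinate $\ln\rho(A_i)>0$ by the standing hypothesis $\rho(\Lambda)_i>1$. Consequently the coordinatewise comparisons in Corollary~\ref{cor:aHLRS2-Cor4.3,4.4} collapse to comparisons of $\beta$ with $1$: we have $\beta r\ge\ln\rho(\Lambda)$ iff $\beta\ge 1$, with equality iff $\beta=1$ and strict inequality iff $\beta>1$. I would record this reduction first, since it is what partitions the problem into exactly the four displayed cases.

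For $\beta<1$ the argument is immediate: Corollary~\ref{cor:aHLRS2-Cor4.3,4.4}(\ref{it:KMSToeplitz}) shows $(\Tt C^*(\Lambda),\alpha)$ has no KMS$_\beta$ state, so $E_\beta=\emptyset$ and $|E_\beta|=0$. For $\beta>1$ we have $\beta r>\ln\rho(\Lambda)$, and here I would invoke the parameterization of the Toeplitz KMS simplex in \cite[Theorem~6.1]{aHLRSk-graphs}: in the strict regime, sending a KMS$_\beta$ state to its values on the vertex projections $\{q_v\}$ identifies the KMS$_\beta$ simplex affinely with a simplex whose extreme points are indexed by the finite vertex set $\Lambda^0$. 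The extreme points are the point masses $\delta_v$, $v\in\Lambda^0$, giving $|E_\beta|=|\Lambda^0|$.

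The essential case is $\beta=1$, where $\beta r=\ln\rho(\Lambda)$. By Corollary~\ref{cor:aHLRS2-Cor4.3,4.4}(\ref{it:allFactor}) every KMS$_1$ state of $(\Tt C^*(\Lambda),\alpha)$ factors through the quotient map $\Tt C^*(\Lambda)\to C^*(\Lambda)$, while conversely every KMS$_1$ state of $(C^*(\Lambda),\alpha)$ pulls back along this map (which intertwines the two gauge actions, hence the dynamics) to a KMS$_1$ state of the Toeplitz algebra. I would check that these two operations are mutually inverse affine homeomorphisms of the respective KMS$_1$ simplices, so that they carry extreme points to extreme points and $|E_1|$ equals the number of extreme KMS$_1$ states of $(C^*(\Lambda),\alpha)$. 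By Theorem~\ref{thm:main} that simplex is affinely isomorphic to the state space of the commutative algebra $C^*(\Per\Lambda)$, whose extreme points are exactly its pure states, namely the characters of $\Per\Lambda$, i.e.\ the points of $(\Per\Lambda)\widehat{\;}$. Thus $|E_1|=|(\Per\Lambda)\widehat{\;}|$. If $\Lambda$ is aperiodic then $\Per\Lambda=\{0\}$ by Proposition~\ref{prp:Per<->per}, so $C^*(\Per\Lambda)=\CC$ has a singleton state space and $|E_1|=1$; if $\Lambda$ is periodic then $\Per\Lambda$ is a nontrivial subgroup of $\ZZ^k$, hence free abelian of positive rank, so $(\Per\Lambda)\widehat{\;}$ is a torus of positive dimension and $|E_1|=\infty$.

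The only genuinely external input is the Toeplitz parameterization cited for $\beta>1$; everything else rests on results already in hand. I expect the main point requiring care to be the $\beta=1$ reduction: verifying that factoring and pullback give an extreme-point-preserving affine homeomorphism of KMS$_1$ simplices, so that the count may be transported through Theorem~\ref{thm:main}, and confirming that the extreme points of the state space of $C^*(\Per\Lambda)$ are precisely the group characters. These are routine but are where the bookkeeping must be done correctly.
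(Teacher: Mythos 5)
Your proposal is correct and follows essentially the same route as the paper: Corollary~\ref{cor:aHLRS2-Cor4.3,4.4}(\ref{it:KMSToeplitz}) for $\beta<1$, the parameterisation of \cite[Theorem~6.1]{aHLRSk-graphs} for $\beta>1$, and for $\beta=1$ the reduction via Corollary~\ref{cor:aHLRS2-Cor4.3,4.4}(\ref{it:allFactor}) to the extreme KMS$_1$ states of $C^*(\Lambda)$ followed by Theorem~\ref{thm:main} and the dichotomy $\Per\Lambda=\{0\}$ versus $\Per\Lambda$ a nontrivial subgroup of $\ZZ^k$. The only difference is that you spell out the routine verification that factoring and pulling back are mutually inverse affine maps, which the paper leaves implicit.
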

\begin{proof}
Suppose that $\beta > 1$. Then $\beta\ln\rho(A_i) > \ln\rho(A_i)$ for all $i$. Since
$\Lambda$ is strongly connected it has no sources by Lemma~\ref{lem-nosources}. Thus
\cite[Theorem~6.1(c)]{aHLRSk-graphs} applies and shows that $|E_\beta| = |\Lambda^0|$.

Now suppose that $\beta=1$. Then
Corollary~\ref{cor:aHLRS2-Cor4.3,4.4}(\ref{it:allFactor}) implies that the quotient map
from $\Tt C^*(\Lambda)$ to $C^*(\Lambda)$ induces a bijection between $E_1$ and the
extreme KMS$_1$ states of $(C^*(\Lambda), \alpha)$. Hence Theorem~\ref{thm:main} gives a
bijection from $E_1$ to the pure states of $C^*(\Per\Lambda)$. If $\Lambda$ is periodic,
then $\Per\Lambda$ is a nontrivial subgroup of $\ZZ^k$ by
Lemma~\ref{lem:per->bijection}(\ref{it:lp->Per}), and so has infinitely many pure states.
If $\Lambda$ is aperiodic, then $\Per\Lambda = \{0\}$, and so $C^*(\Per\Lambda)$ has a
unique state.

If $\beta < 1$, then Corollary~\ref{cor:aHLRS2-Cor4.3,4.4}(\ref{it:KMSToeplitz}) applied
with $r = \ln\rho(\Lambda)$ implies that $(\Tt C^*(\Lambda), \alpha)$ admits no KMS
states.
\end{proof}

\begin{ex}
It is easy to construct examples exhibiting the phase change to an infinite-dimensional
KMS$_1$ simplex described in Corollary~\ref{cor:phasechange}. To see this, consider a
finite directed graph $E$ whose vertex matrix $A_E$ is irreducible and satisfies
$\rho(A_E) > 1$. The path category $E^*$ is a $1$-graph. Define $f : \NN^2 \to \NN$ by
$f(m,n) = m+n$, and let $\Lambda$ be the pullback $2$-graph $f^*E^*$ of
\cite[Definition~1.9]{KP}. Then $\Lambda^0  = E^0 \times \{0\}$, and each
$(v,0)\Lambda^{e_i} (w,0) = vE^1 w \times \{e_i\}$. So $A_1 = A_2 = A_E$ is irreducible,
and so $\Lambda$ is strongly connected. Corollary~3.5(iii) of \cite{KP} shows that
$C^*(\Lambda) \cong C^*(E) \otimes C(\TT)$, which is not simple. So the equivalence
\mbox{(\ref{it:simple})${}\iff{}$(\ref{it:aper})} of Theorem~\ref{thm:YangConnection}
shows that $\Lambda$ is periodic.
\end{ex}

\subsection*{Symmetries of the KMS simplex}

We show next that the gauge action on $C^*(\Lambda)$ induces a free and transitive action
of $(\Per\Lambda)\widehat{\;}$ on the KMS$_1$ simplex of $C^*(\Lambda)$. Recall that
$(\Per\Lambda)^\perp$ denotes the collection of characters of $\ZZ^k$ which are
identically 1 on $\Per\Lambda$. Identifying $\widehat{\ZZ^k}$ with $\TT^k$, we have
\[
(\Per\Lambda)^\perp = \{z \in \TT^k : z^n = 1\text{ for all }n \in \Per\Lambda\}.
\]
There is a homomorphism $q : \TT^k \to(\Per\Lambda)\widehat{\;}$ such that $q(z)(g) =
z^g$, and $\ker q = (\Per\Lambda)^\perp$.

\begin{prop}
Let $\Lambda$ be a strongly connected finite $k$-graph.
\begin{enumerate}
\item\label{it:whenequal} For $z,w \in \TT^k$, the states $\phi_z$ and $\phi_w$ of
    Corollary~\ref{cor:many KMS} are equal if and only if $z\overline{w} \in
    (\Per\Lambda)^\perp$.
\item\label{it:quotient} There is a homeomorphism $h$ of $(\Per\Lambda)\widehat{\;}$
    onto the set $E$ of extreme points of the KMS$_1$ simplex of $C^*(\Lambda)$ such
    that $h(q(z)) = \phi_z$ for all $z \in \TT^k$.
\item\label{it:symmetries} The gauge action $\gamma$ induces a free and transitive
    action $\widetilde{\gamma}^*$ of $(\Per\Lambda)\widehat{\;}$ on $E$ such that
    $\widetilde{\gamma}^*_{\chi}(h(\rho)) = h(\chi\rho)$ for $\chi,\rho \in
    (\Per\Lambda)\widehat{\;}$.
\end{enumerate}
\end{prop}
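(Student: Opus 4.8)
The plan is to deduce everything from the explicit formula \eqref{eq:phiz formula} for $\phi_z$ together with the affine isomorphism $\pi_U^*$ of Theorem~\ref{thm:main}, reducing parts~(\ref{it:quotient}) and~(\ref{it:symmetries}) to bookkeeping with the bijection $h$. For part~(\ref{it:whenequal}), I would read off from \eqref{eq:phiz formula} that $\phi_z$ and $\phi_w$ differ only through the scalars $z^{d(\mu)-d(\nu)}$ and $w^{d(\mu)-d(\nu)}$, and only on those spanning elements $s_\mu s^*_\nu$ for which $d(\mu)-d(\nu)\in\Per\Lambda$ and $\theta_{d(\mu),d(\nu)}(\mu)=\nu$. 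The key point is that \emph{every} $g\in\Per\Lambda$ is realised this way: writing $g=m-n$ with $m,n\in\NN^k$, choosing any $\mu\in\Lambda^m$ (nonempty since $\Lambda$ has no sources by Lemma~\ref{lem-nosources}) and setting $\nu:=\theta_{m,n}(\mu)$ produces a term with strictly positive coefficient $\rho(\Lambda)^{-m}x^\Lambda_{s(\mu)}$. Hence $\phi_z=\phi_w$ if and only if $z^g=w^g$ for all $g\in\Per\Lambda$, equivalently $(z\overline{w})^g=1$ for all $g\in\Per\Lambda$, which is precisely $z\overline{w}\in(\Per\Lambda)^\perp$.

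For part~(\ref{it:quotient}), since $\Per\Lambda$ is a subgroup of $\ZZ^k$ the restriction map $q:\TT^k=\widehat{\ZZ^k}\to(\Per\Lambda)\widehat{\;}$ is surjective, and part~(\ref{it:whenequal}) says its kernel $(\Per\Lambda)^\perp$ is exactly the set on which $z\mapsto\phi_z$ is constant. Thus $z\mapsto\phi_z$ descends to a well-defined injection $h$ with $h(q(z))=\phi_z$. To identify its range with $E$, I would invoke the computation in the proof of Theorem~\ref{thm:main}, which shows $\pi_U^*(\phi_z)=q(z)$ (viewing the character $q(z)$ of $\Per\Lambda$ as a pure state of the commutative algebra $C^*(\Per\Lambda)$). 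Since $\pi_U^*$ is an affine homeomorphism of the KMS$_1$ simplex onto the state space of $C^*(\Per\Lambda)$, it carries $E$ bijectively onto the pure state space; and the pure states of $C^*(\Per\Lambda)\cong C\big((\Per\Lambda)\widehat{\;}\big)$ are precisely the characters $\{q(z):z\in\TT^k\}$. Hence $E=\{\phi_z\}$, $h$ is a bijection onto $E$, and $h^{-1}$ is the composite of $\pi_U^*|_E$ with the Gelfand identification of pure states with $(\Per\Lambda)\widehat{\;}$, so $h$ is a homeomorphism.

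For part~(\ref{it:symmetries}), I would first note that because $\TT^k$ is abelian each $\gamma_w$ commutes with the preferred dynamics $\alpha_t=\gamma_{\rho(\Lambda)^{it}}$, so $\phi\mapsto\phi\circ\gamma_w$ is an affine homeomorphism of the KMS$_1$ simplex and therefore permutes $E$. A direct computation using $\phi_z=\phi_1\circ\gamma_z$ gives $\phi_z\circ\gamma_w=\phi_{zw}$, so $\gamma_w$ sends $h(q(z))$ to $h(q(zw))=h\big(q(z)q(w)\big)$. By part~(\ref{it:whenequal}) this depends on $w$ only through $q(w)$, so setting $\widetilde{\gamma}^*_\chi:=(\phi\mapsto\phi\circ\gamma_w)|_E$ for any $w$ with $q(w)=\chi$ is well defined and satisfies $\widetilde{\gamma}^*_\chi(h(\rho))=h(\chi\rho)$. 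Transporting through the bijection $h$, this is exactly the translation action of the group $(\Per\Lambda)\widehat{\;}$ on itself, which is free and transitive.

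The main obstacle I expect is part~(\ref{it:quotient}): pinning down that the image $\{\phi_z:z\in\TT^k\}$ is \emph{all} of $E$, not merely contained in it, and that $h$ is a homeomorphism rather than just a bijection. Both points rest on transporting the extreme-point structure and the weak-$*$ topology through the affine homeomorphism $\pi_U^*$ of Theorem~\ref{thm:main} and on the identity $\pi_U^*(\phi_z)=q(z)$; once these are established, parts~(\ref{it:whenequal}) and~(\ref{it:symmetries}) follow formally from the formula~\eqref{eq:phiz formula}.
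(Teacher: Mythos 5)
Your proposal is correct and follows essentially the same route as the paper: part~(\ref{it:whenequal}) by reading off the formula~\eqref{eq:phiz formula} and realising every $g\in\Per\Lambda$ with a strictly positive coefficient, part~(\ref{it:quotient}) via the identity $\pi_U^*(\phi_z)=q(z)$ from the proof of Theorem~\ref{thm:main}, and part~(\ref{it:symmetries}) from $\phi_z\circ\gamma_w=\phi_{zw}$. The only (harmless) divergence is in part~(\ref{it:quotient}), where the paper checks continuity of $h$ directly by a subsequence argument and then uses compactness of $(\Per\Lambda)\widehat{\;}$, whereas you obtain the homeomorphism by factoring $h^{-1}$ through $\pi_U^*$ and the Gelfand identification; both are valid.
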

\begin{proof}
(\ref{it:whenequal}) Suppose that $z\overline{w} \in( \Per\Lambda)^\perp$. Then
$z^{d(\mu) - d(\nu)} = w^{d(\mu) - d(\nu)}$ whenever $d(\mu) - d(\nu) \in \Per\Lambda$.
Hence~\eqref{eq:phiz formula} implies that $\phi_z = \phi_w$.

Now suppose that $z\overline{w} \not\in( \Per\Lambda)^\perp$. Take $m - n \in
\Per\Lambda$ with $(z\overline{w})^{m-n} \not= 1$, and let $\mu\in \Lambda^m$. Let
$x^\Lambda$ be the unimodular Perron-Frobenius eigenvector of $\Lambda$.
Corollary~\ref{cor-PFgraph}(\ref{cor-PFgraph1}) implies that $x^\Lambda_{s(\mu)} \not=
0$, and so $z^{m-n} \rho(\Lambda)^{-m} x^\Lambda_{s(\mu)} \not= w^{m-n}
\rho(\Lambda)^{-m} x^\Lambda_{s(\mu)}$. Hence~\eqref{eq:phiz formula} implies that
$\phi_z(s_\mu s^*_{\theta_{m,n}(\mu)}) \not= \phi_w(s_\mu s^*_{\theta_{m,n}(\mu)})$.

(\ref{it:quotient}) Part~(\ref{it:whenequal}) implies that the formula $h(q(z)) = \phi_z$
determines a well-defined bijection from $(\Per\Lambda)\widehat{\;}$ to $E$. Suppose that
$\chi_n \to \chi$ in $(\Per\Lambda)\widehat{\;}$, and choose $z_n \in \TT^k$ such that
$q(z_n) = \chi_n$. By passing to a subsequence we may assume that the $z_n$ converge to
some $z \in \TT^k$. We then have $q(z) = \chi$. The formula~\eqref{eq:phiz formula} shows
that $\phi_{z_n}(s_\mu s^*_\nu) \to \phi_z(s_\mu s^*_\nu)$ for all $\mu,\nu$, and an
$\varepsilon/3$-argument then shows that $\phi_{z_n} \to \phi_z$, and so $h(\chi_n) \to
h(\chi)$. Thus $h$ is a continuous bijection, and so a homeomorphism since
$(\Per\Lambda)\widehat{\;}\,$ is compact.

(\ref{it:symmetries}) Formula~\eqref{eq:phiz formula} implies that $\phi_w \circ \gamma_z
= \phi_{wz}$. So if $z'\bar{z} \in (\Per \Lambda)^\perp$, then part~(\ref{it:whenequal})
implies that $\phi_w \circ \gamma_z = \phi_w \circ \gamma_{z'}$ for all $w$. Hence the
action $\gamma^*$ of $\TT^k$ on $E$ induced by $\gamma$ descends to an action
$\widetilde{\gamma}^*$ of $(\Per\Lambda)\widehat{\;}$ satisfying
$\widetilde{\gamma}^*_{q(z)}\big(h(q(w))\big) = h\big(q(z)q(w)\big)$ as required. This
action is free and transitive because left translation in $(\Per\Lambda)\widehat{\;}$ is
free and transitive.
\end{proof}

\section{The groupoid model}\label{sec:groupoids}

In \cite{N}, Neshveyev studies KMS states for dynamics on groupoid $C^*$-algebras arising
from continuous $\RR$-valued cocycles on groupoids. The Cuntz-Krieger algebra of a
$k$-graph $\Lambda$ admits a groupoid model with such a dynamics, and in this section we
check that our Theorem~\ref{thm:main} agrees with Neshveyev's \cite[Theorem~1.3]{N} for
these examples.

\subsection*{Neshveyev's theorem}

Let $G$ be a locally compact second-countable \'etale groupoid and $c:G\to\RR$  a
continuous cocycle. There is a dynamics $\alpha^c$ on $C^*(G)$ such that
$\alpha_t^c(f)(g)=e^{itc(g)}f(g)$ for $f\in C_c(G)$ and $g\in G$.

Let $U$ be an open bisection of $G$ and write $T^U : r(U) \to s(U)$ for the homeomorphism
$r(g) \mapsto s(g)$ for $g \in U$. Recall from \cite[page~4]{N} that a measure $\mu$ on
$G^{(0)}$ is said to be quasi-invariant with Radon-Nikodym cocycle $e^{-\beta c}$ if
$\frac{d T^U_*\mu}{d\mu}(s(g)) = e^{-\beta c(g)}$ for every open bisection $U$ and every
$g \in U$.

For $x$ in the unit space $G^{(0)}$, write $G_x^x$ for the stability subgroup $\{g\in G:
r(g)=x=s(g)\}$ and $G_x$ for the subset $\{g\in G: s(g)=x\}$ of $G$. Theorem~1.3 of
\cite{N} describes the KMS$_\beta$ states of $(C^*(G),\alpha^c)$ in terms of pairs $(\mu,
\psi)$ consisting of a quasi-invariant probability measure $\mu$ on $G^{(0)}$ with
Radon-Nikodym cocycle $e^{- \beta c}$ and a $\mu$-measurable field $\psi = (\psi_x)_{x
\in G^{(0)}}$ of states $\psi_x : C^*(G^x_x) \to \CC$ such that for $\mu$-almost all $x
\in G^0$ we have
\begin{equation}\label{eq:invariance-again}
\psi_x(u_g) = \psi_{r(h)}(u_{hgh^{-1}})
    \text{ for all $g \in G^x_x$ and $h \in G_x$.}
\end{equation}
(There is a second condition which we can ignore because for non-zero inverse
temperatures $\beta$ the properties of $\mu$ ensure that it is always satisfied.)
Neshveyev's theorem does not distinguish between measurable fields that agree
$\mu$-almost everywhere.

\subsection*{The path groupoid} Let $\Lambda$ be a row-finite $k$-graph with no sources.
The set
\begin{equation*}
G:= \{(x,m-n,y) : x,y \in \Lambda^\infty, m,n\in\NN^k
    \text{ and } \sigma^m(x) = \sigma^n(y)\}
\end{equation*}
is a groupoid  with range and source maps $r(x,g,y) = (x,0,x)$, $s(x, g, y) = (y,0,y)$,
composition $(x,g,y)(y,h,z) = (x, g+h, z)$ and inverses $(x,g,y)^{-1} = (y,-g,x)$. We
identify $G^{(0)}$ with $\Lambda^\infty$ via $(x,0,x)\mapsto x$.

For $\lambda, \eta \in \Lambda$ with $s(\lambda) = s(\eta)$, define
\begin{align*}
Z(\lambda,\eta)
    = \{(x, d(\lambda) - d(\eta), y) \in G : x \in Z(\lambda), y \in Z(\eta)
                    \text{ and } \sigma^{d(\lambda)}(x) = \sigma^{d(\eta)}(y)\}.
\end{align*}
By Proposition~2.8 of \cite{KP}, the sets $Z(\lambda,\eta)$ form a basis for a locally
compact Hausdorff topology on $G$. With this topology $G$ is a second-countable \'etale
groupoid, called the \emph{path groupoid}. Each $Z(\eta,\lambda)$ is a compact open
bisection. By Corollary~3.5 of \cite{KP} there is an isomorphism of $C^*(\Lambda)$ onto
the $C^*$-algebra $C^*(G)$ of $G$ such that
\begin{equation}\label{eq-isoongen}
    t_\lambda\mapsto 1_{Z(\lambda, s(\lambda))}.
\end{equation}

\subsection*{Theorem~\ref{thm:main} and Neshveyev's theorem}
Let $\Lambda$ be a strongly connected finite  $k$-graph and let $G$ be its path groupoid.

There is a locally constant cocycle $c : G \to \RR$ given by $c(x, n, y) = n \cdot
\ln\rho(\Lambda)$. This cocycle induces a dynamics $\alpha^c: \RR \to \Aut C^*(G)$ such
that $\alpha^c_t(f)(x,n,y) = e^{itc(x,n,y)} f(x,n,y) = \rho(\Lambda)^{itn}$ for $f\in
C_c(G)$. It is straightforward to check that the  isomorphism of $C^*(\Lambda)$ onto
$C^*(G)$ characterised by \eqref{eq-isoongen} intertwines $\alpha^c$ and the preferred
dynamics $\alpha$ on $C^*(\Lambda)$.

It follows from  \eqref{eq:measure cond} that the measure $M$ on $\Lambda^\infty$ of
Proposition~\ref{prop:measure e-u} is quasi-invariant with Radon-Nikodym cocycle
$e^{-c}$; the next lemma implies that $M$ is the only such measure and  investigates its
support further.  For the latter, we note that if $g\in\Per\Lambda$, then there exist
$m,n\in\NN^k$ such that $g=m-n$ and $\sigma^m(x)=\sigma^n(x)$ for all
$x\in\Lambda^\infty$.  Thus for each $x\in\Lambda^\infty$,
\[\{x\}\times\Per\Lambda\times\{x\}\subseteq G_x^x=\{(x,g,x)\in G :x\in\Lambda^\infty\}.\]

\begin{lem}\label{lem:mu=M}
Suppose that $\mu$ is a non-zero quasi-invariant probability measure on $G^{(0)}=
\Lambda^\infty$ with Radon-Nikodym cocycle $e^{-c}$. Then $\mu$ is the measure $M$ of
Proposition~\ref{prop:measure e-u} and
\begin{equation}\label{eq:support mu}
M\big(\{x\in\Lambda^\infty: \{x\}\times\Per\Lambda\times\{x\}= G_x^x\}\big)=1.
\end{equation}
\end{lem}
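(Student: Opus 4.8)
The plan is to deduce that $\mu$ satisfies the cylinder identity \eqref{eq:measure cond} and then quote the uniqueness half of Proposition~\ref{prop:measure e-u}. The key step is to apply the quasi-invariance hypothesis to the compact open bisections $U_\lambda := Z(\lambda, s(\lambda))$ that carry the generators $t_\lambda$ under \eqref{eq-isoongen}. On $U_\lambda$ the cocycle $c$ takes the constant value $d(\lambda)\cdot\ln\rho(\Lambda)$, so $e^{-c} \equiv \rho(\Lambda)^{-d(\lambda)}$ there, while the associated homeomorphism $T^{U_\lambda}$ is the restriction of $\sigma^{d(\lambda)}$ to a bijection of $Z(\lambda)$ onto $Z(s(\lambda))$. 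Thus quasi-invariance gives $\frac{dT^{U_\lambda}_*\mu}{d\mu} \equiv \rho(\Lambda)^{-d(\lambda)}$ on $Z(s(\lambda))$, that is, $T^{U_\lambda}_*\mu = \rho(\Lambda)^{-d(\lambda)}\mu$ on $Z(s(\lambda))$. Evaluating both sides on all of $Z(s(\lambda))$ and using that $T^{U_\lambda}$ is a bijection, so that $(T^{U_\lambda})^{-1}(Z(s(\lambda))) = Z(\lambda)$, yields $\mu(Z(\lambda)) = \rho(\Lambda)^{-d(\lambda)}\mu(Z(s(\lambda)))$, which is exactly \eqref{eq:measure cond}. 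Since $\mu$ is a Borel probability measure, Proposition~\ref{prop:measure e-u} forces $\mu = M$.

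For \eqref{eq:support mu}, I will use that $\{x\}\times\Per\Lambda\times\{x\}\subseteq G_x^x$ for every $x$, as noted just before the lemma; since $G_x^x$ is the isotropy group at $x$, it is a subgroup of $\{x\}\times\ZZ^k\times\{x\}\cong\ZZ^k$ containing $\Per\Lambda$. Hence the set in \eqref{eq:support mu} is the complement of $\{x : (x,g,x)\in G \text{ for some } g\in\ZZ^k\setminus\Per\Lambda\}$, and unwinding the definition of $G$ this bad set is
\[
\bigcup_{g\in\ZZ^k\setminus\Per\Lambda}\ \bigcup_{\substack{m,n\in\NN^k\\ m-n=g}} \{x\in\Lambda^\infty : \sigma^m(x)=\sigma^n(x)\}.
\]
Each inner set is closed, hence Borel (as observed before Proposition~\ref{prp:m sees Per}), and has $M$-measure zero by Proposition~\ref{prp:m sees Per} because $m-n = g\notin\Per\Lambda$. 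As $\ZZ^k$ is countable, the displayed set is a countable union of $M$-null sets and so is $M$-null; therefore its complement, the set in \eqref{eq:support mu}, has measure $1$.

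Both parts rest almost entirely on results already in hand, so I do not anticipate a serious obstacle. The one point requiring care is orienting the Radon--Nikodym derivative correctly in the first part: it is the bijectivity of $T^{U_\lambda}$, together with the fact that $c$ is constant on $U_\lambda$, that turns the quasi-invariance identity into \eqref{eq:measure cond} rather than its reciprocal. Once that identity is in place, the uniqueness statement in Proposition~\ref{prop:measure e-u} settles the first claim and the null-set estimate of Proposition~\ref{prp:m sees Per} settles the second.
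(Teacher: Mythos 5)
Your proof is correct and follows essentially the same route as the paper: both apply the quasi-invariance condition to the bisections $Z(\lambda,s(\lambda))$ to get $\mu(Z(\lambda))=\rho(\Lambda)^{-d(\lambda)}\mu(Z(s(\lambda)))$, and both dispose of \eqref{eq:support mu} by writing the bad set as a countable union over $m-n\notin\Per\Lambda$ and invoking Proposition~\ref{prp:m sees Per}. The only cosmetic difference is that you cite the uniqueness clause of Proposition~\ref{prop:measure e-u} directly, whereas the paper re-runs that clause's Perron--Frobenius subinvariance argument inline via Corollary~\ref{cor-PFgraph}(\ref{cor-PFgraph3}); the content is the same.
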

\begin{proof}
Let $v\in\Lambda^0$ and $\lambda\in v\Lambda$.  Then $Z(\lambda,s(\lambda))$ is a
bisection  with $r\big(Z(\lambda, s(\lambda))\big)=Z(\lambda)$ and $s\big(Z(\lambda,
s(\lambda))\big)=Z(s(\lambda))$. By the quasi-invariance of $\mu$ we have
\begin{equation}\label{M and mu}
\mu(Z(\lambda)) = e^{-d(\lambda)\cdot \ln\rho(\Lambda)}\mu(Z(s(\lambda)) = \rho(\Lambda)^{-d(\lambda)}\mu(Z(s(\lambda)).
\end{equation}
In particular, if $\lambda\in v\Lambda^{e_i}$ then
$\mu(Z(\lambda))=\rho(A_i)^{-1}\mu(Z(s(\lambda))$. Thus
\begin{align}\label{eq:quasi-invariance-again}
\mu(Z(v))
    &\ge \mu\big(\bigsqcup_{w\in\Lambda^0}\bigsqcup_{\lambda \in v\Lambda^{e_i}w} Z(\lambda)\big)=\sum_{w\in\Lambda^0}\sum_{\lambda \in v\Lambda^{e_i}w} \mu\big(Z(\lambda)\big)\notag\\
    & =\rho(A_i)^{-1} \sum_{w\in\Lambda^0} A_i(v,w) \mu(Z(w)).
\end{align}
Set $m := \big(\mu(Z(v)\big) \in [0,\infty)^{\Lambda^0}$. Then
\eqref{eq:quasi-invariance-again} says that $m$ satisfies $\rho(A_i) m \ge A_im$. Also,
$\sum_{v\in\Lambda^0} m_v = \mu\big(\bigsqcup_{v\in\Lambda^0} Z(v)\big) =
\mu(\Lambda^\infty) = 1$. Thus Corollary~\ref{cor-PFgraph}\eqref{cor-PFgraph3} implies
that $m$ is the Perron-Frobenius eigenvector $x^\Lambda$ of $\Lambda$. Now~\eqref{M and
mu} shows that $\mu(Z(\lambda)) = \rho(\Lambda)^{-d(\lambda)} x^\Lambda_{s(\lambda)}$,
and this is $M(Z(\lambda))$ by~\eqref{eq:measure cond}. Since the $Z(\lambda)$ form a
basis for the topology on $\Lambda^\infty$ we have  $\mu =M$.

Finally,
\begin{align*}
\{x\in\Lambda^\infty : {}& \{x\}\times\Per\Lambda\times\{x\}= G_x^x\}\\
&=\{x\in\Lambda^\infty: m,n\in \NN^k\text{ and } \sigma^m(x)=\sigma^n(x)\Longrightarrow m-n\in\Per\Lambda\}\\
&=\bigcap_{m,n\in \NN^k} \{x\in\Lambda^\infty:m-n\notin\Per\Lambda \Longrightarrow \sigma^m(x)\neq\sigma^n(x)\}\\
&=\bigcap_{m,n\in \NN^k,m-n\notin \Per\Lambda} \{x\in\Lambda^\infty:\sigma^m(x)\neq\sigma^n(x)\}\\
&=\Lambda^\infty\setminus \bigcup_{m,n\in \NN^k,m-n\notin \Per\Lambda}\{x\in\Lambda^\infty:\sigma^m(x)=\sigma^n(x)\}.
\end{align*}
By Proposition~\ref{prp:m sees Per}, if $m-n\not\in\Per\Lambda$, then $M(\{x \in
\Lambda^\infty : \sigma^m(x) = \sigma^n(x)\}) = 0$. Since $\{(m,n) : m-n \not\in
\Per\Lambda\}$ is countable, this gives
\[
M\Big(\bigcup_{m,n\in \NN^k,m-n \not\in \Per\Lambda}
   \{x \in \Lambda^\infty : \sigma^m(x) = \sigma^n(x)\}\Big) = 0
\]
and~\eqref{eq:support mu} follows.
\end{proof}

Now let $(\mu, \psi)$ be one of  Neshveyev's pairs for $(C^*(G), \alpha^c)$. By
Lemma~\ref{lem:mu=M}, $\mu=M$ and $M\big(\{x\in\Lambda^\infty:
\{x\}\times\Per\Lambda\times\{x\}= G_x^x\}\big)=1$. Thus we may assume that $\psi_x=0$
unless $\{x\}\times\Per\Lambda\times\{x\}= G_x^x$. For each $x \in \Lambda^\infty$, let
$\iota_x: C^*(\Per\Lambda)\to C^*(\{x\}\times\Per\Lambda\times\{x\})$ be the isomorphism
such that $\iota_x(u_n) = u_{(x,n,x)}$. For $a\in C_c(\Per\Lambda)$, the
$M$-measurability of $\psi$ implies that  $x\mapsto \psi_x(\iota_x(a))$ is
$M$-measurable. Thus there is a state $\rho$ of $C^*(\Per\Lambda)$ such that
\begin{equation*}
	\rho(a) = \int_{\Lambda^\infty} \psi_x(\iota_x(a))\, dM(x).
\end{equation*}
for $a\in C^*(\Per\Lambda)$.

Conversely,  let $\rho$ be a state of $C^*(\Per\Lambda)$. The measure $M$ is
quasi-invariant with Radon-Nikodym cocycle $e^{-c}$. Define
\[
\rho_x =
    \begin{cases}
        \rho\circ\iota_x^{-1}&\text{ if $\{x\}\times\Per\Lambda\times\{x\}= G_x^x$}\\
        0&\text{else}.
    \end{cases}
\]
For $f\in  C_c(G)$, the map   $x\mapsto \sum_{m\in\Per\Lambda}
f(x,m,x)\psi_x(u_{(x,m,x)})=\sum_{m\in\Per\Lambda} f(x,m,x)\rho(u_m)$ is continuous,
hence measurable, and so $(\rho_x)$ is a measurable field.
Equation~\eqref{eq:invariance-again} follows because
$\rho_x(u_{(x,m,x)})=\rho(u_m)=\rho_y(u_{(y,m,y)})$. So $(M,(\rho_x))$ is one of
Neshveyev's pairs. Thus, reassuringly,  our Theorem~\ref{thm:main} and Neshveyev's
\cite[Theorem~1.3]{N} say the same. To prove Theorem~\ref{thm:main} using the groupoid
approach we would have had to do much the same work (except for
Proposition~\ref{prp:integral KMS}) and we would have lost the transparency of the direct
proof.

\end{document}